\newcommand{\goaway}[1]{}
\newlist{deflist}{enumerate}{1}
\setlist[deflist]{label=(\arabic{deflisti}), ref=\thelemma.(\arabic{deflisti}),noitemsep}
\newtheorem{lemma}{Lemma}[subsection]
\newtheorem{theorem}[lemma]{Theorem}
\newtheorem{proposition}[lemma]{Proposition}
\newtheorem{corollary}[lemma]{Corollary}
\newtheorem*{ack}{Acknowledgements}
\theoremstyle{definition}
\newtheorem{example}[lemma]{Example}
\newtheorem{remark}[lemma]{Remark}
\newtheorem{definition}[lemma]{Definition}
\newtheorem{d-a}[lemma]{Definition-Assumption}
\renewcommand{\P}{\mathscr{P}}      
\newcommand{\zero}{\hat{0}}			
\newcommand{\A}{\mathscr{A}}		
\newcommand{\B}{\mathscr{B}}        
\newcommand{\pr}{\bar{p}}           
\newcommand{\G}{\mathbb{G}}         
\newcommand{\Z}{\mathbb{Z}}			
\newcommand{\R}{\mathbb{R}}			
\newcommand{\C}{\mathbb{C}}         
\newcommand{\QQ}{\mathbb{Q}}        
\newcommand{\proj}{\mathrm{pr}}
\newcommand{\ii}{\bm{\iota}}
\renewcommand{\k}[1][k]{\mathbf{#1}}         
\newcommand{\kplus}[1][k]{(\k[#1],1)}
\newcommand{\SConf}[1]{\Conf^{#1}}
\newcommand{\IL}{\Gamma}            
\newcommand{\X}{\mathfrak{X}}       
\newcommand{\coefmap}{\mathbf{a}}    
\newcommand{\rootmap}{\mathbf{b}}    
\DeclareMathOperator{\Hom}{Hom}
\DeclareMathOperator{\Conf}{Conf}
\DeclareMathOperator{\id}{id}
\DeclareMathOperator{\Aut}{Aut}
\DeclareMathOperator{\Der}{Der}
\DeclareMathOperator{\ad}{ad}
\DeclareMathOperator{\secat}{secat}
\newcommand{\pg}[1]{\Sigma_{#1}} 
\newcommand{\clie}{\mathcal{L}}
\newcommand{\lie}{\mathfrak{h}}
\newcommand{\ab}{\mathfrak{a}}
\newcommand{\CC}{\mathscr{C}}        
\newcommand{\LL}{\mathbb{L}}
\newcommand{\EE}{\mathsf{E}}
\newcommand{\II}{\mathsf{I}}
\newcommand{\ns}{\mathsf{n}}
\newcommand{\zs}{\mathsf{z}}
\newcommand{\ps}{\mathsf{p}}
\newcommand{\as}{\mathsf{a}}
\newcommand{\bs}{\mathsf{b}}
\newcommand{\ws}{\mathsf{w}}
\newcommand{\xs}{\mathsf{x}}
\newcommand{\ys}{\mathsf{y}}
\newcommand{\us}{\mathsf{u}}
\newcommand{\vs}{\mathsf{v}}
\newcommand{\tc}{\mathsf{TC}}
\newcommand{\ww}{\overline{\omega}}
\newcommand{\mylabel}[2]{#2\def\@currentlabel{#2}\label{#1}}
\newcommand\mynobreakpar{%
\par\nobreak\@afterheading} 
\title{
Monodromy of supersolvable toric arrangements
}
\author{Christin Bibby}
\thanks{Bibby supported by NSF DMS-2204299}
\address{Louisiana State University, Baton Rouge, LA, USA}
\email{\url{bibby@math.lsu.edu}}
\author{Daniel C.\ Cohen}
\address{Louisiana State University, Baton Rouge, LA, USA}
\email{\url{cohen@math.lsu.edu}}
\author{Emanuele Delucchi}
\address{University of Applied Arts and Sciences of Southern Switzerland}
\email{\url{emanuele.delucchi@supsi.ch}}
\subjclass{
55R10; 
55R80
}
\keywords{
toric arrangement, 
configuration space, 
supersolvable poset,
arrangement of submanifolds%
}
\begin{document}

\begin{abstract}
We study topological aspects of supersolvable abelian arrangements, toric arrangements in particular. The complement of such an arrangement sits atop a tower of fiber bundles, and we investigate the relationship between these bundles and bundles involving classical configuration spaces. In the toric case, we show that the monodromy of a supersolvable arrangement bundle factors through the Artin braid group, and that of a strictly supersolvable arrangement bundle factors further through the Artin pure braid group. The latter factorization is particularly informative -- we use it to determine a number of invariants of the complement of a strictly supersolvable arrangement, including the cohomology ring and the lower central series Lie algebra of the fundamental group.
\end{abstract}

\maketitle

\tableofcontents

\vspace*{14pt}
\section{Introduction}

\subsection{Background}
Over the last decades, the study of complements of hyperplane arrangements in complex vector spaces has given rise to a rich theory at the crossroads of algebraic topology and combinatorics. One of the seminal papers in this field is the work of Arnol'd on the cohomology of pure braid groups \cite{arnold}, motivated by the connection to configuration spaces and the classical Fadell--Neuwirth theorem \cite{FN}. 
In this sense, complements of hyperplane arrangements and their fundamental groups are generalizations of configuration spaces of ordered points in the plane and pure braid groups. 

This analogy is particularly strong for {\em fiber-type} arrangements, introduced by Falk and Randell \cite{FR} as the class of hyperplane arrangements satisfying a recursive fibration property akin to Fadell and Neuwirth's for configuration spaces. Fiber-type arrangements of hyperplanes have been in the focus of substantial research: they can be characterized purely combinatorially via Stanley's theory of supersolvable lattices \cite{stanley}, and much of the theory of braid groups and configuration spaces has an analogue in this more general context. For instance, we mention results on the lower central series (LCS) of the fundamental group of the complement \cite{FR} and the associated LCS Lie algebra \cite{cohen,CCX}, isomorphic to the holonomy Lie algebra of the arrangement \cite{KohnoHolo}. A key fact in this context, first observed by Cohen \cite{cohen}, is that the fiber bundles arising in the hyperplane arrangement case can be pulled back from classical Fadell--Neuwirth bundles for configuration spaces of points in the plane. 
This facilitates the explicit computation of the monodromy of (fiber-type) arrangement bundles \cite{CSmonodromy,cohen}, and the determination of the cohomology ring of the complement from the iterated semidirect product  
structure of its fundamental group \cite{DCalmostdirect}.

Recently, the focus of the theory of arrangements has been broadened towards the case of hypersurfaces in complex tori ({\em toric arrangements}) and, more generally, in connected abelian Lie groups ({\em abelian arrangements}). This research direction has gained substantial momentum from the 2010's
in the wake of De Concini, Procesi and Vergne's seminal work  on vector partition functions and Dahmen-Micchelli spaces of splines \cite{DCP,DPV,DPV2}, among others. 
Some notable advances have been made on the topological side, including the computation of the integer cohomology ring in the toric case \cite{CDDMP} and in the non-compact abelian case \cite{BPP}. Such topological invariants appear to be strongly related to the structure of the partially ordered set of connected components of intersections of the hypersurfaces (the so-called {\em poset of layers}) which, in turn, has been studied from the combinatorial point of view -- see, for instance, \cite{Tomz,ER,DR,bibby}.

The notion of fiber-type arrangements in the toric and abelian setting has been introduced in \cite{BD}, together with an equivalent combinatorial characterization that generalizes Stanley's supersolvability for lattices.  A main takeaway from \cite{BD} is that in this broader context there are two combinatorial notions of supersolvability: one is equivalent to the inductive fibration property for the arrangement complement and the other, stronger one (called {\em strict supersolvability}) defines a class of posets where closer analogues of the features of classical supersolvable lattices hold. 
While a thorough poset-theoretic investigation of this circle of ideas, leading to an even finer classification, has been carried out in \cite{PPTV},
a main motivation of the current article is to carry out a further investigation from the topological point of view.

In \cite[Theorem 5.3.1]{BD}, it was noted that strict supersolvability of the poset of layers of an arrangement implies that the corresponding fiber bundles are  
pulled back from Fadell--Neuwirth  
bundles for suitable configuration spaces.
This raises two natural questions. 
First, are  the fiber bundles arising from the ``weaker'' notion of supersolvability realizable as pullbacks of configuration space bundles? 
Moreover, one can ask whether, at least in the special case of toric arrangements, invariants such as the monodromy, the cohomology ring, and the LCS Lie algebra can be determined by utilizing the aforementioned relationship between strict supersolvability and classical configuration spaces.

\subsection{Overview and structure of the paper} We further the topological study of supersolvable toric and abelian arrangements along the two directions mentioned above.

In \Cref{sec:arr}, we lay the foundations and 
show that the fiber bundles associated to any supersolvable abelian arrangement can be pulled back from Fadell-Neuwirth-type bundles involving orbit spaces of the action of products of symmetric groups on classical ordered configuration spaces (\Cref{thm:pullback}).

In \Cref{sec:toric}, we specialize to toric arrangements, where
the pullbacks are from spaces of configurations of points in the plane. In \Cref{thm:toricpullback}, we give a characterization of the maps along which the configuration space bundles are pulled back. 
These maps, a {\em coefficient map} $\k[a]$ into an unordered configuration space in the supersolvable case, and a {\em root map} $\k[b]$ into an ordered configuration space in the strictly supersolvable case, 
may be used to describe the monodromy of the fiber bundles associated with supersolvable toric arrangements. In particular, this monodromy factors through the Artin representation of the braid group in the automorphism group of the free group (\Cref{prop:coef/root}). As a consequence, we show that the fundamental group of the complement of \emph{any} supersolvable toric arrangement is an iterated semidirect product of free groups (\Cref{cor:almostdirect}), structure previously observed in the strictly supersolvable case in 
\cite{BD}. 
These results provide a clear distinction between supersolvable and strictly supersolvable toric arrangements. In the former case, the iterated semidirect product structure of the fundamental group is determined by braid automorphisms. In the latter, the monodromy factors further through the pure braid group, yielding almost-direct product structure in the sense of \cite{FR}.
 
In \Cref{sec:ssta}, we focus on the special case of {\em strictly} supersolvable toric arrangements. 
The complement of such an arrangement sits atop a tower of bundles, determined by a sequence of root maps to ordered configuration spaces. The main gist is that this sequence of root maps determines the structure of both the cohomology ring of the complement (\Cref{thm:H*ring}) and the LCS Lie algebra of its fundamental group (\Cref{thm:LCSliealg}). Specifically, each relevant root map induces a map in (first) homology, which we call a \emph{homological root homomorphism}. These homomorphisms, computed in terms of the defining characters of the arrangement in \Cref{thm:hrm}, may be used to obtain explicit presentations for both the cohomology ring and the LCS Lie algebra. The resulting cohomology presentation, different than those of \cite{CD,CDDMP,BPP}, exhibits the Koszulity of the 
cohomology algebra (\Cref{cor:koszul}).  We also compute the topological complexity of the complement, noting that it only depends on the ambient dimension and the {rank} of the arrangement (\Cref{thm:TC}).

Illustrations via concrete examples are provided throughout the paper. These include a family of rank two strictly supersolvable toric arrangements consisting of three hypersurfaces in the two-dimensional torus discussed in \Cref{subsec:rank2circ}, and the family of Weyl type C toric arrangements of arbitrary rank studied in \Cref{sec:typeC}. 
Presentations of the cohomology ring of the complement and the LCS Lie algebra of its fundamental group are obtained for both families.  
In rank two, we also demonstrate how our methods yield explicit fundamental group presentations.

\section{Arrangements and configuration spaces}\label{sec:arr}

\subsection{Abelian and toric arrangements}
Let $\G$ be a connected abelian Lie group, $\IL\cong\Z^d$ a finitely generated free abelian group, and $T=\Hom(\IL,\G)\cong\G^d$.

\begin{definition}
An \textbf{abelian arrangement} $\A$ is, for some finite set $\X=\X(\A)\subseteq\IL$, the collection of connected components of the subspaces
\[H_{\chi} := \{t\in T \mid \chi\in \ker(t) \}\] 
with $\chi\in\X(\A)$.

The \textbf{complement} of $\A$ is denoted by 
\[M(\A) := T\smallsetminus \bigcup_{\chi\in\X(\A)} H_\chi.\]

The \textbf{poset of layers} of $\A$ is the set $\P(\A)$ whose elements are the nonempty connected components of intersections $\cap_{\chi\in S}H_\chi$ where $S\subseteq\X(\A)$, partially ordered by reverse inclusion.
\end{definition}

\begin{remark} \label{rem:essential}
We pay special attention to two cases: when $\G=\C$, $T$ is a complex affine space and $\A$ is called a \textbf{hyperplane arrangement}; when $\G=\C^\times$,  $T$ is a complex torus and $\A$ is called a \textbf{toric arrangement}. 
We focus primarily on toric arrangements that are \textbf{essential}, i.e., where 
the maximal elements of $\P(\A)$ are points,
since as noted in \cite[Remark 2.7]{CDDMP} one can always find an essential arrangement $\A'$ in a torus $(\C^\times)^r$ such that $M(\A)\cong M(\A')\times(\C^\times)^{d-r}$. We refer to $r$ as the \textbf{rank} of $\A$.
\end{remark}

\begin{remark} \label{rem:rou}
Let $\A$ be a toric arrangement and consider $\chi\in \X(\A)$. Fixing an isomorphism $\Gamma \cong \Z^d$ and corresponding coordinates on $T\cong \C^\times$ we have
$$H_\chi =\{\mathbf{x}=(x_1,\dots,x_d)\in (\C^\times)^d \mid x_1^{c_1}\cdots x_d^{c_d} = 1\}$$ 
where $(c_1,\ldots,c_d)\in \Z^d$ corresponds to $\chi\in\Gamma$. Let $m:=\gcd(c_1,\ldots,c_d)$. Then $H_\chi$ is connected if and only if $\chi$ is primitive, equivalently if $m=1$. In general, the different connected components of $H_{\chi}$ are given by 
\begin{equation}\label{eq:rou}
x_1^{{c_1}/{m}}\cdots x_d^{{c_d}/{m}} = \mu
\end{equation}
where $\mu$ runs over all $m$-th roots of unity.
\end{remark}

\begin{example}\label{ex:A}
Let $\G=\C^\times$ and $\IL=\Z^2$, so $T\cong(\C^\times)^2$.
The columns $\chi_1$, $\chi_2$, and $\chi_3$ of the integer matrix
\[\begin{pmatrix}
2 & -2 & 0 \\
0 & 1 &  1
\end{pmatrix}\]
 define a toric arrangement $\A=\{H_0,H_1,H_2,H_3\}$, where
$H_0$ and $H_1$ denote the two connected components of $H_{\chi_1}$, $H_2:=H_{\chi_2}$, and $H_3:=H_{\chi_3}$.
Considering $\R/\Z\subseteq\C/\Z\cong\C^\times$, the ``real part'' of the arrangement is depicted in \Cref{fig:A}, and the Hasse diagram for the poset of layers is depicted in \Cref{fig:P}.
\begin{figure}[ht]
\begin{subfigure}[t]{.4\textwidth}
\centering
\begin{tikzpicture}
\draw[-] (0,0) -- (2,0) -- (2,2) -- (0,2) -- (0,0);
\draw[ultra thick,blue,-] (0,0) -- (0,2);
\draw[ultra thick,blue,-] (1,0) -- (1,2);
\draw[ultra thick,teal,-] (0,0) -- (2,0);
\draw[ultra thick,purple,-] (0,0) -- (1,2);
\draw[ultra thick,purple,-] (1,0) -- (2,2);
\node at (0,-.2) {};
\end{tikzpicture}
\caption{A (real) toric arrangement $\A$ in $S^1\times S^1$}
\label{fig:A}
\end{subfigure}
\hfill
\begin{subfigure}[t]{.4\textwidth}
\centering
\begin{tikzpicture}[scale=.6]
\node (T) at (0,0) {\scriptsize $T$};
\node[blue] (0) at (-3,2) {\scriptsize $H_0$};
\node[blue] (1) at (3,2) {\scriptsize $H_1$};
\node[purple] (2) at (-1,2) {\scriptsize $H_2$};
\node[teal] (3) at (1,2) {\scriptsize $H_3$};
\node (+) at (-1,4) {\scriptsize $(1,1)$};
\node (-) at (1,4) {\scriptsize $(-1,1)$};
\foreach \x in {0,1,2,3} {
\draw[-] (\x) -- (T) ;
};
\foreach \x in {0,2,3} {
\draw[-] (\x) -- (+) ;
};
\foreach \x in {1,2,3} {
\draw[-] (\x) -- (-) ;
};
\end{tikzpicture}
\caption{The poset of layers $\P(\A)$}
\label{fig:P}
\end{subfigure}
\caption{See \Cref{ex:A,ex:A2,ex:A3}, and \Cref{subsec:rank2circ}.}
\end{figure}
\end{example}

\subsection{Supersolvability}

A subgroup $Y$ of $T$ is \textbf{admissible} if there is a rank-one direct summand $\IL'\subseteq \IL$ such that $Y$ is the image of the injection $\epsilon^*:\Hom(\IL',\G)\to\Hom(\IL,\G)$ induced by the projection $\epsilon:\IL\to\IL'$. 
When $Y$ is admissible, the corresponding projection 
\[p:T\to T/Y \cong\Hom(\IL/\IL',\G)\]
is a section of the map induced by the quotient $q:\IL\to\IL/\IL'$. This allows us to define abelian arrangements
\[
\A_Y:= \{H\in\A \mid H\supseteq Y\}
\qquad 
\A/Y := \{p(H) \mid H\in\A_Y \}\]
in $T$ and $T/Y$, respectively. Note that $\P(\A_Y)$ is by definition a subposet of $\P(\A)$.

The projection $p:T\to T/Y$ restricts to a map on arrangement complements $\pr:M(\A)\to M(\A/Y)$ and induces an isomorphism of posets $\P(\A_Y)\cong\P(\A/Y)$.

\begin{definition}[{\cite[Definitions 2.4.1, 5.1.1]{BD}}]\label{def:MTM}
Let $Y$ be an admissible subgroup of $T$, and $\A$ an abelian arrangement in $T$.
We say $\P(\A_Y)$ is an \textbf{M-ideal} of $\P(\A)$ if for any two distinct $H_1,H_2\in \A\smallsetminus \A_Y$, and any component $X$ of $H_1\cap H_2$, there is some $H_3\in\A_Y$ such that $H_3\supseteq X$.
Say $\P(\A_Y)$ is a \textbf{TM-ideal} if, in addition, the intersection $H\cap Y$ is connected for all $H\in \A\smallsetminus \A_Y$.

Say $\A$ is \textbf{(strictly) supersolvable} if there is a chain
\begin{equation} \label{eq:sschain}
\{\zero\} \subset \P(\A_{Y_1})\subset \P(\A_{Y_2}) \subset\cdots\subset \P(\A_{Y_{d-1}}) \subset\P(\A)
\end{equation}
with each $\P(\A_{Y_r})$ a (T)M-ideal of its successor.
\end{definition}

\begin{remark}
Notice that in \Cref{def:MTM} the rank of $\P(\A_Y)$ is one less than the rank of $\P(\A)$. Henceforth, whenever we say $\P(\A_Y)$ is a corank-one M-ideal of $\P(\A)$, it is assumed that $Y$ is an admissible subgroup of $T$.
\end{remark}

\begin{lemma}\label{lem:layermap}
Let $\A$ be an abelian arrangement in $T\cong\G^d$, and suppose that $\P(\A_Y)$ is a corank-one M-ideal of $\P(\A)$.
Let $p: T\to T/Y$  be the projection to the quotient. Then 
for every $X\in \P(\A)$ we have $p(X)\in \P(\A/Y)$. Moreover, if $X\in \P(\A_Y)$ then $\dim p(X) = \dim (X) -\dim(\G)$, otherwise $\dim p(X) = \dim (X)$.
\end{lemma}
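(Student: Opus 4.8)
The plan is to work with the defining characters and use the structure of the admissible subgroup $Y$ together with the M-ideal hypothesis. Fix a rank-one direct summand $\IL'\subseteq\IL$ witnessing admissibility, write $\IL=\IL'\oplus\IL''$ with $\IL''\cong\Z^{d-1}$, and use the induced coordinates on $T\cong\G\times(T/Y)$, so that $Y=\G\times\{e\}$ and $p$ is the second projection. First I would verify the easy half: since $p$ is a surjective group homomorphism with connected kernel $Y$, it is an open map, hence takes connected sets to connected sets; and for any layer $X\in\P(\A)$ there is $S\subseteq\X(\A)$ with $X$ a component of $\bigcap_{\chi\in S}H_\chi$, so I must argue $p(X)$ is a component of an intersection of hypersurfaces of $\A/Y$. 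The containment $p(X)\subseteq\bigcap_{\chi\in S\cap\X(\A_Y)}p(H_\chi)$ is clear; the content is to show $p(X)$ is a \emph{whole} connected component of the layer of $\A/Y$ that contains it, and to pin down its dimension.

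The key case split is whether $X\in\P(\A_Y)$, i.e., whether $X\supseteq Y$, or not. If $X\in\P(\A_Y)$, then $X$ is a union of $Y$-cosets, so $p$ restricted to $X$ is a fiber bundle with fiber $Y\cong\G$, giving $\dim p(X)=\dim X-\dim\G$ immediately; moreover the poset isomorphism $\P(\A_Y)\cong\P(\A/Y)$ induced by $p$ (stated in the excerpt) identifies $X$ with a layer of $\A/Y$, so $p(X)\in\P(\A/Y)$. If $X\notin\P(\A_Y)$, I would use the section $q\colon T/Y\to T$ (the inclusion $\Hom(\IL/\IL',\G)\hookrightarrow T$ induced by the splitting) and show $p$ maps $X$ homeomorphically onto $p(X)$, so $\dim p(X)=\dim X$. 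Injectivity of $p|_X$ is where the M-ideal hypothesis enters: if two points of $X$ differed only in the $Y$-coordinate, then $X$ would be invariant under a nontrivial piece of the $Y$-action, forcing $X$ to meet, hence (by connectedness and the arrangement being locally a union of cosets of subtori/subgroups) be contained in, some translate that the M-ideal condition rules out unless $X\in\P(\A_Y)$. Concretely, I expect to argue: the stabilizer-type subgroup $\{y\in Y: y+X=X\}$ is either trivial or all of $Y$; in the latter case $X\supseteq Y\cdot x$ for $x\in X$ and one deduces $X\in\P(\A_Y)$ using that $X$ is cut out by characters and the M-ideal/admissibility structure; in the former case $p|_X$ is injective.

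The remaining point — that $p(X)$ is closed, i.e., is an entire component and not a proper subset of one — would follow from properness of $p|_X$ in the injective case, which holds because $X$ is closed in $T$ and, after passing to the essential situation or using that layers are closed subgro-translates, $p|_X$ is proper onto its image; alternatively one shows $p(X)$ is both open and closed in the ambient layer $\bigcap p(H_\chi)$ by a local dimension count. The main obstacle I anticipate is precisely this injectivity/properness step in the case $X\notin\P(\A_Y)$: one must rule out that a layer not containing $Y$ nevertheless "wraps around" the $Y$-direction, and the only tool available is the M-ideal condition (which controls how pairs $H_1,H_2\notin\A_Y$ intersect) together with admissibility of $Y$. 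I would reduce to pairwise intersections: any component $X$ of $\bigcap_{\chi\in S}H_\chi$ with $X\notin\P(\A_Y)$ is a component of some $H_1\cap H_2$ with $H_1,H_2\notin\A_Y$ (or of a single $H_1\notin\A_Y$), and then the M-ideal hypothesis supplies $H_3\in\A_Y$ with $H_3\supseteq X$, from which $p(H_3)$ is a hypersurface of $\A/Y$ containing $p(X)$ of the right codimension, giving both the dimension claim and the "full component" claim by comparing local equations. Once pairwise intersections are handled, the general case follows by intersecting and inducting on $|S|$.
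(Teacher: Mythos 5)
The dimension claim you want is correct, but the mechanism you propose to prove it in the case $X\notin\P(\A_Y)$ has a genuine gap. You assert that $p$ maps $X$ homeomorphically onto $p(X)$, and you justify this via the dichotomy ``the stabilizer $\{y\in Y: y+X=X\}$ is either trivial or all of $Y$.'' That dichotomy fails whenever $\G$ has nontrivial finite subgroups, which is precisely the interesting case $\G=\C^\times$. The stabilizer can be a nontrivial finite subgroup of $Y\cong\G$, making $p|_X$ a nontrivial finite covering rather than a homeomorphism. \Cref{ex:A} in the paper already gives a concrete counterexample: with $Y=H_3=\{t_2=1\}$, the hypersurface $H_2=\{t_2=t_1^2\}\notin\A_Y$ has stabilizer $\{\pm 1\}\subset Y$, and $p|_{H_2}\colon (t_1,t_1^2)\mapsto t_1^2$ is two-to-one. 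You flag injectivity/properness as the anticipated obstacle, but the fix you propose (reduce to pairwise intersections and invoke the M-ideal) cannot close this gap, because the failure of injectivity already occurs for a \emph{single} hypersurface $H'\notin\A_Y$, where the M-ideal hypothesis is silent. The stabilizer argument you sketch would work for $\G=\C$ (hyperplane arrangements), but not in the abelian/toric generality the lemma requires.

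The paper's proof avoids injectivity entirely. After reducing to $X$ a subgroup, it writes $X$ as a component of $H_1\cap\dots\cap H_k\cap H_1'\cap\dots\cap H_l'$ with $H_i\in\A_Y$ and $H_j'\notin\A_Y$, and then uses the M-ideal condition to \emph{eliminate all but one} of the $H_j'$: for any pair $H_i',H_j'$, the M-ideal furnishes an $H\in\A_Y$ so that the relevant component of $H_i'\cap H_j'$ agrees with one of $H\cap H_i'$, so one may replace $H_j'$ by $H$. This leaves $X$ a component of $H_1\cap\dots\cap H_k\cap H_1'$ with a single $H_1'\notin\A_Y$. Transversality of $H_1'$ to the $H_i$ (which all contain $Y$) gives $\dim X=\dim(\cap_i H_i)-\dim\G$, and the covering property $p(H_1')=T/Y$ gives $p(X)=W$, a full component of $\cap_i p(H_i)$, of dimension $\dim(\cap_i H_i)-\dim\G=\dim X$. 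In short: replace your ``$p|_X$ is a homeomorphism'' with ``$p|_X$ is a finite covering onto a full component, by transversality and surjectivity of $p|_{H'}$,'' and make the M-ideal reduction to a single non-$\A_Y$ hypersurface explicit; then the proof goes through.
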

\begin{proof} Let $X\in \P(\A)$. Then $X$ is a coset of a closed connected subgroup of $T$. 

Now choose $ H_1,\ldots,H_k\in \A_Y$, $H_1',\ldots,H_l'\in \A\smallsetminus \A_Y$ such that $X$ is a connected component of $H_1\cap\ldots\cap H_k\cap H_1'\cap\ldots \cap H_l'$, where $k,l\geq 0$. Since $p(H_i')=T/Y$ for all $i=1,\ldots,l$ (e.g., by \cite[Corollary 3.3.2.]{BD}), the coset $p(X)$ is contained in a connected component $W\in \P(\A/Y)$ of $\cap_i p(H_i)$.

If $X\in \P(\A_Y)$ then we may suppose $l=0$, and $H_i= p(H_i)\times Y$ implies $X=p(X)\times Y $. In particular, $\dim (p(X)) = \dim (X) -\dim(\G)$.

If $X\not\in \P(\A_Y)$ then $l>0$, for every $1\leq i<j\leq l$, the definition of M-ideal implies that there is $H\in\A_Y$ such that the connected component of $H_i\cap H_j$ containing $W$ equals the connected component of $H\cap H_i$ containing $W$. Thus we can assume that $X$ is a connected component of an intersection of the form $H_1\cap\ldots \cap H_k \cap H_1'$. Since $H_1'$ is transverse to every $H_i$, 
$\dim(X)=\dim(\cap_i H_i)-\dim(\G)$.
Moreover, since $p(H_1')=T/Y$, $p(X)=W$ and in particular $\dim p(X) = \dim(W) = \dim (\cap_i p(H_i)) = 
\dim(\cap_i H_i)-\dim(\G) = \dim(X).$
\end{proof}

For us, the importance of an M-ideal is that it characterizes when the map $\pr:M(\A)\to M(\A/Y)$ is a fiber bundle \cite[Theorem A]{BD}. Our immediate goal is to show that these bundles are closely related to bundles 
over configuration spaces. We subsequently specialize to toric arrangements where this structure has particularly interesting consequences.

\begin{example}\label{ex:A2}
Recall the toric arrangement from \Cref{ex:A}.
The subgroup $Y=H_0$ yields a TM-ideal $\P(\A_Y) = \{T,H_0,H_1\}$, hence the poset $\P(\A)$ is strictly supersolvable.
The subgroup $Y=H_3$ (or similarly $Y=H_2$) yields an M-ideal $\P(\A_Y)=\{T,H_3\}$ which is not a TM-ideal, since $H_2\cap H_3$ is disconnected.
\end{example}

\subsection{Somewhat ordered configuration spaces}
Given a positive integer $k$ and topological space $X$, denote the \textit{ordered configuration space} by
\[\Conf_k(X) = \{(x_1,\dots,x_k)\in X^k \mid x_i\neq x_j \text{ when } i\neq j \}.\]
The symmetric group $\pg{k}$ acts freely on $\Conf_k(X)\subseteq X^k$ by permuting coordinates. 
The \textit{unordered configuration space} is the quotient space $\Conf_k(X)/\pg{k}$, whose elements are regarded as sets (rather than ordered tuples) of distinct points in $X$. 
More generally, consider a composition of the integer $k$, that is, a sequence $\k=(k_1,\dots,k_m)$ of positive integers satisfying $k=k_1+\cdots+k_m$.
Such a composition 
determines a  subgroup $\pg{\k} := \pg{k_1}\times\dots\times\pg{k_m} \subseteq \pg{k}$.
The \textit{somewhat ordered configuration space} (or \textit{$\k$-ordered configuration space}) is then defined as the quotient
\[\SConf{\k}(X) := \Conf_k(X)/\pg{\k}.\]
An element of $\SConf{\k}(X)$ can be represented by an ordered tuple $(S_1,\dots,S_m)$ of pairwise disjoint subsets of $X$ with $|S_i|=k_i$ for each $i$. 

By a classical result of Fadell and Neuwirth \cite[Theorem 3]{FN} (see also \cite[Theorem 1.1]{FH}), for ordered configuration spaces of a manifold $X$ without boundary, the forgetful map 
\begin{equation} \label{eq:FN}
\Conf_{k+1}(X)\to\Conf_k(X),\quad  (x_1,\dots,x_k,x_{k+1}) \mapsto (x_1,\dots,x_k),
\end{equation}
is a fiber bundle, with fiber homeomorphic to $X$ with $k$ points removed. We refer to this as the \textit{Fadell-Neuwirth bundle}.

\begin{proposition}\label{prop:FN}
Let $X$ be a manifold without boundary, and
let $\k=(k_1,\dots,k_m)$ be a composition of an integer $k$. 
Setting $\kplus = (k_1,\dots,k_m,1)$, the function
$\pi:\SConf{\kplus}(X) \to \SConf{\k}(X)$, given by $(S_1,\dots,S_m,S_{m+1}) \mapsto (S_1,\dots,S_m)$,
is a fiber bundle whose fiber is homeomorphic to $X$ with $k$ points removed.
\end{proposition}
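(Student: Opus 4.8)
The plan is to reduce the statement to the classical Fadell--Neuwirth bundle \eqref{eq:FN} by producing an explicit pullback square, and then invoke the fact that a pullback of a fiber bundle is a fiber bundle. First I would exhibit the commutative diagram whose left-hand vertical map is the Fadell--Neuwirth bundle $\Conf_{k+1}(X)\to\Conf_k(X)$, whose right-hand vertical map is the map $\pi$ in the statement, and whose horizontal maps are the quotient maps $\Conf_{k+1}(X)\to\Conf_{k+1}(X)/\pg{\kplus}=\SConf{\kplus}(X)$ and $\Conf_k(X)\to\Conf_k(X)/\pg{\k}=\SConf{\k}(X)$. Commutativity is immediate from the definitions: both composites send $(x_1,\dots,x_{k+1})$ to the tuple of blocks $(S_1,\dots,S_m)$ obtained by partitioning the first $k$ coordinates according to $\k$. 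The key claim is that this square is a pullback, i.e.\ that $\Conf_{k+1}(X)\cong \SConf{\kplus}(X)\times_{\SConf{\k}(X)}\Conf_k(X)$. Equivalently, a point of $\SConf{\kplus}(X)$ lying over $(x_1,\dots,x_k)\bmod\pg\k$ is the same as a choice of an extra point $x_{k+1}\in X$ distinct from $x_1,\dots,x_k$: indeed the $\pg{\kplus}$-orbit of $(x_1,\dots,x_k,x_{k+1})$ records the blocks $S_1,\dots,S_m$ (which are determined by the point downstairs) together with the singleton $\{x_{k+1}\}$, so the fiber of the horizontal map $\SConf{\kplus}(X)\to\SConf{\k}(X)$ over a class represented by $(x_1,\dots,x_k)$ is exactly $X\setminus\{x_1,\dots,x_k\}$, naturally identified with the fiber of $\Conf_{k+1}(X)\to\Conf_k(X)$ over $(x_1,\dots,x_k)$.

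Next I would make the identification precise using the $\pg\k$-action. The right square above is $\pg\k$-equivariant (with $\pg\k$ acting on the $X^k$-factors and trivially on the extra point, and trivially on $\SConf{\kplus}$, $\SConf\k$), and on each level the horizontal map is a principal $\pg\k$-bundle in the sense of being a free quotient by $\pg\k$ — note $\Conf_{k+1}(X)/\pg\k \cong \SConf{\kplus}(X)$ since $\pg{\kplus}=\pg\k\times\pg{1}$ and $\pg1$ is trivial. Because $\pg\k$ acts freely and its action commutes with both vertical maps and the extra-coordinate bookkeeping, quotienting the Fadell--Neuwirth pullback description by $\pg\k$ yields precisely $\SConf{\kplus}(X)=\Conf_{k+1}(X)/\pg\k$ as the total space of the pullback of $\Conf_{k+1}(X)\to\Conf_k(X)$ along $\Conf_k(X)\to\SConf\k(X)$ — but in fact I want the pullback the other way, along $\pi$. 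The clean statement is: $\pi\colon\SConf{\kplus}(X)\to\SConf\k(X)$ is the bundle obtained from the Fadell--Neuwirth bundle $\Conf_{k+1}(X)\to\Conf_k(X)$ by the Borel-type construction $\Conf_{k+1}(X)/\pg\k \to \Conf_k(X)/\pg\k$; since $\pg\k$ acts freely on both spaces and compatibly with the (locally trivial) bundle projection, the quotient map remains a fiber bundle with unchanged fiber. I would phrase this as: locally over $\SConf\k(X)$ we may lift to $\Conf_k(X)$ using local sections of the (covering space) quotient $\Conf_k(X)\to\SConf\k(X)$, pull back a Fadell--Neuwirth trivialization there, and push it back down; the $\pg\k$-freeness guarantees these local trivializations are well defined on $\SConf\k(X)$.

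The fiber computation then follows for free: over a class represented by $(x_1,\dots,x_k)\in\Conf_k(X)$, the fiber is the set of disjoint-from-$\{x_1,\dots,x_k\}$ singletons, i.e.\ $X\setminus\{x_1,\dots,x_k\}$, which is $X$ with $k$ points removed; since all $k$-point subsets of the manifold $X$ are ambient-isotopic (as $\dim X\ge 2$), this fiber is well defined up to homeomorphism.

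I expect the main obstacle to be the bookkeeping that makes ``quotient by the free $\pg\k$-action of a locally trivial bundle is again locally trivial'' rigorous in this setting, i.e.\ checking that the local trivializations of the Fadell--Neuwirth bundle can be chosen $\pg\k$-equivariantly (or, equivalently, descend through the covering $\Conf_k(X)\to\SConf\k(X)$). This is where I would be most careful: one option is to appeal directly to the pullback characterization, observing that $\Conf_k(X)\to\SConf\k(X)$ is a covering map, so pulling the bundle $\pi$ back along it gives a bundle over $\Conf_k(X)$ which is readily identified with the Fadell--Neuwirth bundle; then since a map is a fiber bundle if and only if it becomes one after pulling back along a surjective covering (local triviality is a local condition and the covering is a local homeomorphism), $\pi$ is a fiber bundle. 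A slightly slicker alternative is to note that $\SConf{\kplus}(X)\to\SConf\k(X)$ fits into the composite $\SConf{\kplus}(X)\to \Conf_k(X)/\pg\k$, realize it as the total space of the ``universal block'' bundle, and cite the standard fact (used already for unordered configuration spaces) that the Fadell--Neuwirth bundle is $\pg k$-equivariant with respect to the coordinate permutation action on the first $k$ points, hence descends along any subgroup $\pg\k\le\pg k$. Either route reduces everything to \eqref{eq:FN} plus the elementary stability of fiber bundles under pullback.
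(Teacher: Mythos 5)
Your proof is correct and ultimately coincides with the paper's one-line argument: the Fadell--Neuwirth bundle $\Conf_{k+1}(X)\to\Conf_k(X)$ is $\pg{\k}$-equivariant (with $\pg{\k}\le\pg{k}$ permuting the first $k$ coordinates) and hence descends to the free quotients. The opening pullback-square discussion is a detour -- exhibiting $\Conf_{k+1}(X)$ as the pullback of $\pi$ along the covering $\Conf_k(X)\to\SConf{\k}(X)$ does not by itself show $\pi$ is a bundle -- but you correctly self-correct and settle on the equivariance/descent argument, supplying the useful extra detail that local triviality of $\pi$ can be verified by lifting through that covering.
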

\begin{proof}
The Fadell-Neuwirth bundle \eqref{eq:FN} of ordered configuration spaces is equivariant with respect to the $\pg{\k} = \pg{k_1}\times\dots\times\pg{k_m} \subseteq \pg{k}$ 
actions, hence induces a bundle on the quotients.
\end{proof}

\begin{remark}
Let $\k[n]=(n_1,\dots,n_m)$ be a permutation of the composition $\k=(k_1,\dots,k_m)$, and for $0\leq i\leq m$ let $\k[n_i]=(n_1,\dots,n_{i},1,n_{i+1},\dots,n_m)$.
Then the map $\SConf{\k[n_i]}(X)\to\SConf{\k[n]}(X)$ given by $(S_1,\dots,S_{m+1})\mapsto (S_1,\dots,S_{i-1},S_{i+1},\dots,S_{m+1})$ is a bundle equivalent to the bundle $\pi$ of \Cref{prop:FN}.
\end{remark}

\begin{remark} \label{rem:unorderpull}
The bundle $\SConf{\kplus}(X) \to \SConf{\k}(X)$ of \Cref{prop:FN} may be pulled back from the bundle $\SConf{(k,1)}(X) \to \SConf{(k)}(X)=\Conf_k(X)/\pg{k}$ over the unordered configuration space. 
\end{remark}

\subsection{Monodromy and the Artin representation} \label{subsec:Artin}
If $p\colon E \to B$ is a fiber bundle, with section $s$, choosing basepoints $b_0\in B$ and $e_0\in p^{-1}(b_0)= F\subset E$, the long exact homotopy sequence of the bundle splits, yielding\begin{center}
\begin{tikzcd}
1 \arrow{r} & \pi_1(F,e_0) \arrow{r} & \pi_1(E,e_0) \arrow{r}{p_\sharp} & \pi_1(B,b_0) \arrow{r} \arrow[l,bend left=-33,"s_\sharp",swap] & 1
\end{tikzcd}
\end{center}
Here and below, we use $f_\sharp$ to denote the map on fundamental groups induced by a continuous map $f\colon (X,x_0)\to (Y,f(x_0))$.
Suppressing basepoints, the split exact sequence above realizes the fundamental group $\pi_1(E)$ of the total space as a semidirect product of the fundamental groups of the base and fiber.
The semidirect product structure is determined by an action of $\pi_1(B)$ on $\pi_1(F)$, that is, a homomorphism $\phi\colon \pi_1(B) \to \Aut(\pi_1(F))$. Here, $\Aut(\pi_1(F))$ denotes the group of right automorphisms of $\pi_1(F)$, with group operation $\alpha \cdot \beta$ given by the composition $\beta  \circ \alpha$ for automorphisms $\alpha$ and $\beta$. The homomorphism 
$\phi\colon \pi_1(B) \to \Aut(\pi_1(F))$ is the \textbf{monodromy} of the bundle $p\colon E \to B$.

In the case $X=\C$, the bundle $\SConf{(k,1)}(\C) \to \Conf_k(\C)/\pg{k}$ noted above is equivalent (that is, fiberwise homeomorphic) to the bundle denoted $p_k\colon Y^{k+1}\to B^k$ in 
\cite[\S2]{CSmonodromy}, where $B^k=\Conf_k(\C)/\pg{k}$. As noted there, the monodromy of this bundle is the Artin representation
$\alpha_k\colon B_k \to \Aut(F_k)$, where $B_k$ is the $k$-strand Artin (full) braid group and $\Aut(F_k)$ is the group of right automorphisms of the free group $F_k$. 
See, for instance, \cite{Hansen} as a general reference on braids. 
In terms of the (standard) generators $\sigma_1,\dots,\sigma_{k-1}$ of $B_k$ and $\ys_1^{},\dots,\ys_k^{}$ of $F_k$, the Artin representation is given by
\begin{equation} \label{eq:FullArtin}
\alpha_k(\sigma_i)(\ys_j^{})=\begin{cases}
\ys_i^{}\ys_{i+1}^{}\ys_i^{-1}&\text{if $j=i$,}\\
\ys_i^{}&\text{if $j=i+1$,}\\
\ys_j^{}&\text{otherwise.}
\end{cases}
\end{equation}

Since the Artin representation is faithful, for a braid $\beta$, we often abbreviate the automorphism $\alpha_k(\beta)$ by simply $\beta$.
With this convention, the restriction $\hat{\alpha}_k\colon P_k \to \Aut(F_k)$ of the Artin representation to the pure braid group $P_k<B_k$, with generators 
$a_{i,j}=\sigma_{j-1}^{}\cdots\sigma_{i+1}^{}\sigma_i^2\sigma_{i+1}^{-1}\cdots\sigma_{j-1}^{-1}$, $1\le i<j\le k$, is given by
\begin{equation} \label{eq:PureArtin}
a_{i,j}(\ys_q^{})=\begin{cases}
\ys_i^{}\ys_j^{} \cdot \ys_q^{} \cdot (\ys_i^{}\ys_j^{})^{-1} &\text{if $q=i$ or $q=j$,}\\
[\ys_i^{},\ys_j^{}]\cdot \ys_q^{} \cdot [\ys_i^{},\ys_j^{}]^{-1} &\text{if $i<q<j$,}\\
\ys_q^{}&\text{otherwise,}
\end{cases}
\end{equation}
One can write $\ys_i^{}\ys_j^{} \cdot \ys_q^{} \cdot (\ys_i^{}\ys_j^{})^{-1}={[}\ys_i^{}\ys_j^{},\,\ys_q^{}] \cdot \ys_q^{}$ and $[\ys_i^{},\ys_j^{}]\cdot \ys_q^{} \cdot [\ys_i^{},\ys_j^{}]^{-1}={[}[\ys_i^{},\ys_j^{}],\ys_q^{}]\cdot \ys_q^{}$.

Observe that pure braid automorphisms are IA-automorphisms of the free group $F_k$, inducing the identity on the abelianization.  Also, as noted in \cite[\S2]{CSmonodromy}, the restriction $\hat{\alpha}_k$ of the Artin representation to $P_k=\pi_1(\Conf_k(\C))$ is the monodromy of the Fadell-Neuwirth bundle  $\Conf_{k+1}(\C)\to \Conf_k(\C)$. 

\subsection{Abelian arrangement bundles as pullbacks}\label{sec:pullback}
Let $\G$ be a connected abelian Lie group.
Let $\A$ be an essential abelian arrangement in $T\cong\G^d$ and $Y$ an admissible subgroup of $T$ such that $\P(\A_Y)$ is an M-ideal in $\P(\A)$. 
Then 
the projection $p:T\to T/Y$ restricts to a 
map $\pr:M(\A)\to M(\A/Y)$.
We prove that the restriction $\pr$ 
is a pullback of a configuration space bundle from \Cref{prop:FN}, building on special cases seen in \cite[Theorem 1.1.5]{cohen}, 
\cite[Theorem 3.5.1]{BD}.

\begin{theorem}\label{thm:pullback}
Let $\A$ be an abelian arrangement in $T\cong\G^d$, and suppose that $\P(\A_Y)$ is a corank-one M-ideal of $\P(\A)$.
There is a composition $\k$ and continuous map $g:M(\A/Y) \to \SConf{\k}(\G)$ such that $\pr:M(\A)\to M(\A/Y)$ is the pullback of $\pi:\SConf{\kplus}(\G)\to \SConf{\k}(\G)$ along $g$, as in \Cref{fig:pullback}.
\begin{figure}[h]
\begin{tikzpicture}
\node (A) at (-1,2) {$M(\A)$};
\node (B) at (-1,0) {$M(\A/Y)$};
\node (k) at (2,0) {$\SConf{\k}(\G)$};
\node (1) at (2,2) {$\SConf{\kplus}(\G)$};
\draw[->] (A) -- node[left]{$\pr$} (B) ;
\draw[->] (A) -- node[above]{$h$} (1) ;
\draw[->] (B) -- node[above]{$g$} (k) ;
\draw[->] (1) -- node[right]{$\pi$} (k) ;
\end{tikzpicture}
\caption{Pullback diagram of \Cref{thm:pullback}}
\label{fig:pullback}
\end{figure}
\end{theorem}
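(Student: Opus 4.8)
The plan is to construct the map $g$ explicitly from the layer data and then verify the pullback property fiberwise. First I would use the identification $T/Y \cong \Hom(\IL/\IL',\G) \cong \G^{d-1}$ and the section $p\colon T\to T/Y$ to regard $M(\A/Y)$ as an open subset of $\G^{d-1}$. The hyperplanes (or hypersurfaces) $H\in\A_Y$ are the ones that survive to $T/Y$, while each $H'\in\A\setminus\A_Y$ maps onto all of $T/Y$ under $p$, so that the fiber of $\pr$ over a point $z\in M(\A/Y)$ is a copy of $Y\cong\G$ with finitely many points deleted — namely the points of $Y$ lying on the $H'\in\A\setminus\A_Y$ over $z$, organized according to how they coincide. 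Here the M-ideal hypothesis is what makes this organization consistent: by \Cref{def:MTM}, whenever two distinct $H_1',H_2'\in\A\setminus\A_Y$ meet in a component $X$, that component sits below some $H_3\in\A_Y$, so the pattern of coincidences among the deleted points on the fiber is \emph{constant} along the strata of $M(\A/Y)$ cut out by the $H\in\A_Y$; in fact it is constant on all of $M(\A/Y)$ since we restrict to the complement. This is exactly the combinatorial input needed to say that the deleted points, as $z$ varies over $M(\A/Y)$, trace out disjoint ``blocks'' of constant sizes $k_1,\dots,k_m$ — giving the composition $\k=(k_1,\dots,k_m)$.

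Next I would define $g\colon M(\A/Y)\to\SConf{\k}(\G)$ by sending $z$ to the tuple $(S_1(z),\dots,S_m(z))$, where each $S_j(z)\subseteq\G$ is the block of deleted points on the fiber $Y_z\cong\G$ over $z$ — using \Cref{rem:rou} to read off these points, for toric arrangements, as the relevant roots of unity times a monomial in the coordinates of $z$. One must check (i) that $|S_j(z)|=k_j$ is indeed constant, which follows from \Cref{lem:layermap} together with the M-ideal property as above (the $H'$ are transverse to the $H$ and to each other modulo the identifications forced by $\A_Y$), and (ii) that $g$ is continuous, which is clear since locally the $S_j(z)$ are given by continuous families of solutions to the defining equations with no collisions. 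Then the lift $h\colon M(\A)\to\SConf{\kplus}(\G)$ is $h(t) = (S_1(\pr(t)),\dots,S_m(\pr(t)),\{\text{coordinate of }t\text{ in }Y\})$, where the last singleton records the position of $t$ in the fiber $Y_{\pr(t)}\cong\G$; by construction this point is disjoint from all the $S_j$, so $h$ is well-defined, and the square in \Cref{fig:pullback} commutes tautologically. Finally, to see it is a \emph{pullback} square (not merely commutative), I would check that the induced map $M(\A)\to M(\A/Y)\times_{\SConf{\k}(\G)}\SConf{\kplus}(\G)$ is a homeomorphism: it is a continuous bijection because over each $z\in M(\A/Y)$ it restricts to the identity between the fiber $\{z\}\times Y_z$ with the blocks deleted and the fiber of $\pi$ over $g(z)$, namely $\G$ with those same points deleted; and it is open/proper enough to conclude, e.g.\ because $\pr$ is already known to be a fiber bundle by \cite[Theorem A]{BD}, so a fiberwise homeomorphism over the base is automatically a homeomorphism.

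I expect the main obstacle to be item (i) above: pinning down precisely that the deleted points on the fibers assemble into globally well-defined disjoint blocks $S_1,\dots,S_m$ of \emph{constant} cardinality over all of $M(\A/Y)$, rather than blocks whose sizes jump as $z$ crosses the walls $H\cap(T/Y)$. This is the heart of why the M-ideal condition (as opposed to an arbitrary admissible $Y$) is needed: the condition that every coincidence $X\subseteq H_1'\cap H_2'$ lies below some $H_3\in\A_Y$ guarantees that such coincidences are ``invisible'' from $M(\A/Y)$ — they only occur over the walls we have removed — so over the complement the incidence pattern is rigid. Formalizing this cleanly, and matching it to the bookkeeping of $\SConf{\k}$ (choosing which points go into which $S_j$ in a continuous, monodromy-compatible way), is where the real work lies; the rest is the standard observation that a commuting square over a base, restricting to homeomorphisms on each fiber, with the left map a bundle, is a pullback square. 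In the toric case one should additionally record, via \Cref{rem:rou}, the explicit monomial-and-root-of-unity description of the $S_j(z)$, since this is what \Cref{sec:toric} builds on; but that is a concrete unwinding rather than a conceptual difficulty.
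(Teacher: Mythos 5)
Your strategy coincides with the paper's: recognize the fiber over $z\in M(\A/Y)$ as $\G$ with finitely many deleted points falling into blocks indexed by $H_1,\dots,H_m\in\A\setminus\A_Y$, define $g$ by recording those blocks in $\SConf{\k}(\G)$, lift to $h$ by appending the last coordinate, and verify the pullback property fiberwise. The paper's bookkeeping is cleaner, though: after writing $\A\setminus\A_Y=\{H_1,\dots,H_m\}$ one cites [BD, Corollary~3.3.2] to say that each restricted projection $p_i\colon H_i\to T/Y$ is a covering map; the constancy of $k_i:=|p_i^{-1}(t)|$ is then immediate from connectedness of $T/Y$, and the blocks in your item (i) are just the sets $[p_i^{-1}(t)]_1$ of first coordinates of these fibers. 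You instead attribute the constancy of $k_i$ to \Cref{lem:layermap} plus a transversality argument, but that lemma concerns dimensions of images of layers, not covering degrees; the covering-map input from [BD] is the right mechanism, and it also dissolves the worry in your last paragraph, since the $k_i$ are constant over \emph{all} of $T/Y$, so there is no question of sizes jumping. The M-ideal hypothesis enters elsewhere, namely to guarantee the blocks $[p_1^{-1}(t)]_1,\dots,[p_m^{-1}(t)]_1$ are pairwise disjoint over $M(\A/Y)$ (the paper cites [BD, Lemma~3.2.4, Proposition~3.2.5] for this), which is what makes $g$ land in $\SConf{\k}(\G)$.

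The genuine gap is continuity of $g$. You declare it ``clear since locally the $S_j(z)$ are given by continuous families of solutions to the defining equations with no collisions,'' but this only makes sense in the toric or linear setting where the deleted points are explicit (roots of unity times monomials, as in \Cref{rem:rou}). The theorem is stated for an arbitrary connected abelian Lie group $\G$, where there are no ``defining equations'' to solve, and the fibers $p_i^{-1}(t)$ are defined purely by the group structure. The paper's argument instead exploits that covering maps are open: given open $U\subseteq\SConf{\k}(\G)$ and $t\in g^{-1}(U)$, choose small pairwise-disjoint $U_{ij}\subseteq\G$ representing a basic neighborhood of $g(t)$ inside $U$ and a neighborhood $V$ of $t$, set $V_{ij}=p_i\bigl((U_{ij}\times V)\cap H_i\bigr)\cap M(\A/Y)$, and check that $\bigcap_{ij}V_{ij}$ is an open neighborhood of $t$ mapped into $U$. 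This is the step your argument is missing, and it is exactly where the abstract abelian case diverges from the explicit toric one. Your route to the pullback property (continuous fiberwise bijection onto the fiber product, together with the bundle structure from [BD, Theorem~A]) is consistent with the paper, which likewise defers this final verification as routine.
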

\begin{proof}
Write $\A\smallsetminus \A_Y = \{H_1,\dots,H_l\}$.  
We think of $T\cong \G\times(T/Y)$, and for $q=(x,t)\in T$ we let $[q]_1=x$ denote the first coordinate.
By \cite[Corollary 3.3.2]{BD}, for each $i$, the restriction of $p$ to $H_i\subseteq T$ is a covering map $p_i\colon H_i\to T/Y$.
As such, the number $k_i:=|p_i^{-1}(t)|$ is independent of the choice of $t\in T/Y$. 
The sequence $\k=(k_1,\dots,k_l)$ is the composition we will use.

Define the function $g:M(\A/Y)\to\SConf{\k}(\G)$ by
\[g(t) = ([p_1^{-1}(t)]_1,\dots,[p_l^{-1}(t)]_1).\]
This is well-defined since, by \cite[Lemma 3.2.4, Proposition 3.2.5]{BD}, one has $[p_i^{-1}(t)]_1\cap [p_j^{-1}(t)]_1=\emptyset$ for every $t\in M(\A/Y)$ and $i\neq j$.

In order to prove that $g$ is continuous, take an open set $U\subseteq \SConf{\k}(\G)$ and consider $t\in g^{-1}(U)$.
We will construct an open neighborhood of $t$ in $M(\A/Y)$ contained in $g^{-1}(U)$.
Since $\SConf{\k}(\G)$ has the quotient topology from $\Conf_k(\G)$, which has the subspace topology from $\G^k$, we can choose small open sets $U_{ij}\subseteq \G$, for $1\leq i\leq l$ and $1\leq j\leq k_i$, so that
\[U_{11}\times 
\dots\times U_{lk_l} \subseteq\Conf_k(\G)\]
is a representative for an open neighborhood of $g(t)$ in $U$. 
For all $i,j$ the set 
$
(U_{ij}\times T/Y) \cap H_i
$ 
is open in $H_i$. Since covering maps are open, for every $i,j$ the set
$$
V_{ij}:= p_{i}((U_{ij}\times T/Y) \cap H_i) \cap M(\A/Y)
$$
is an open neighborhood of $t$ in $M(\A/Y)$ with 
$g(V_{ij})\subseteq  U$. 
Thus $\bigcap_{ij}V_{ij}$ is the desired open neighborhood of $t$ in $g^{-1}(U)$. 

To complete the diagram \eqref{fig:pullback}, the map $h$ is defined on $M(\A)\subseteq T\cong \G\times(T/Y)$ via $h(t,x)=(g(t),x)$.
The check that this square satisfies the universal property of a pullback is routine (as in the proof of \cite[Theorem 5.3.1]{BD}).
\end{proof}

\begin{remark} \Cref{thm:pullback} implies that the maps $\pr:M(\A)\to M(\A/Y)$ are indeed fiber bundles. This was proved in \cite[Theorem 3.3.1]{BD}, where, for simplicity, the additional technical hypothesis that no two hypersurfaces share a connected component was assumed.
\end{remark}

\begin{remark}\label{rem:orderedpull}
When $\P(\A_Y)$ is a TM-ideal, the composition of \Cref{thm:pullback} is  
$\k=(1,1,\dots,1)$ and the bundle $\pr:M(\A)\to M(\A/Y)$ is a pullback of the Fadell-Neuwirth bundle \eqref{eq:FN} of ordered configuration spaces, recovering \cite[Theorem 5.3.1]{BD}.
\end{remark}

\section{Toric arrangements} \label{sec:toric}

\subsection{Toric arrangement bundles} \label{subsec:tbundles}
In the case that $\G=\C^\times$, there is a close relationship between toric arrangements and configurations of points in the plane.
This in turn has several particularly nice consequences.

\begin{theorem}\label{thm:toricpullback}
Let $\A$ be a toric arrangement, and suppose $\P(\A_Y)$ is a corank-one M-ideal of $\P(\A)$.
\begin{enumerate}
\item \label{item:Cpullback} There is a composition $\k[n]$ and a 
map $f:M(\A/Y)\to \SConf{\k[n]}(\C)$ such that $\pr:M(\A)\to M(\A/Y)$ is the pullback of the bundle $\pi:\SConf{\kplus[n]}(\C)\to\SConf{\k[n]}(\C)$ along $f$.
\item \label{item:coef} There is an integer $n$ and a map $\coefmap\colon M(\A/Y) \to \Conf_n(\C)/\pg{n}$ such that 
$\pr:M(\A)\to M(\A/Y)$ is the pullback of the bundle $\pi\colon \SConf{(n,1)}(\C) \to \SConf{(n)}(\C)= \Conf_n(\C)/\pg{n}$ over the unordered configuration space along $\coefmap$.
\item \label{item:root} If $\P(\A_Y)$ is a TM-ideal,  
there is a map $\rootmap\colon M(\A/Y) \to \Conf_n(\C)$ such that 
$\pr:M(\A)\to M(\A/Y)$ is the pullback of the bundle $\pi\colon \Conf_{n+1}(\C) \to \Conf_n(\C)$ over the ordered configuration space along $\rootmap$.
\end{enumerate}
\end{theorem}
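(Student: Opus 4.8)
The plan is to derive all three parts from \Cref{thm:pullback} applied with $\G=\C^\times$, together with one elementary observation that converts configuration spaces of $\C^\times$ into configuration spaces of $\C$ carrying an extra marked point. The observation is this: for a manifold $X$ and a point $x_0\in X$, sending $(x_1,\dots,x_k)$ to $(x_1,\dots,x_k,x_0)$ embeds $\Conf_k(X\setminus\{x_0\})$ into $\Conf_{k+1}(X)$ as the subspace of configurations whose last point is $x_0$, and under these embeddings (for $k$ and for $k+1$) the Fadell--Neuwirth bundle $\Conf_{k+1}(X\setminus\{x_0\})\to\Conf_k(X\setminus\{x_0\})$ is the pullback of $\Conf_{k+2}(X)\to\Conf_{k+1}(X)$. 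The square in question is cartesian because the defining condition ``$y\notin\{x_1,\dots,x_k,x_0\}$'' for the total space of the target is literally the condition ``$y\in(X\setminus\{x_0\})\setminus\{x_1,\dots,x_k\}$'' for the total space of the source. Since the embeddings and bundles are equivariant for the relevant symmetric group actions, the same holds for somewhat ordered and unordered configuration spaces: adjoining the marked block $\{x_0\}$ identifies $\SConf{\k}(X\setminus\{x_0\})$ with the subspace of $\SConf{(\k,1)}(X)$ whose last block is $\{x_0\}$, and realizes $\SConf{(\k,1)}(X\setminus\{x_0\})\to\SConf{\k}(X\setminus\{x_0\})$ as the pullback of $\SConf{(\k,1,1)}(X)\to\SConf{(\k,1)}(X)$ along this embedding.

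For part \eqref{item:Cpullback}, I would first invoke \Cref{thm:pullback} with $\G=\C^\times$ to obtain a composition $\k=(k_1,\dots,k_m)$ and a map $g\colon M(\A/Y)\to\SConf{\k}(\C^\times)$ realizing $\pr$ as the pullback of $\pi\colon\SConf{\kplus}(\C^\times)\to\SConf{\k}(\C^\times)$, and then apply the observation above with $X=\C$, $x_0=0$ and this $\k$. Writing $\k[n]:=(k_1,\dots,k_m,1)$, so that $\kplus[n]=(k_1,\dots,k_m,1,1)$, this exhibits $\SConf{\kplus}(\C^\times)\to\SConf{\k}(\C^\times)$ as the pullback of $\SConf{\kplus[n]}(\C)\to\SConf{\k[n]}(\C)$ along the embedding $\SConf{\k}(\C^\times)\hookrightarrow\SConf{\k[n]}(\C)$. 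Pasting the two cartesian squares and taking $f$ to be the composite $M(\A/Y)\xrightarrow{\,g\,}\SConf{\k}(\C^\times)\hookrightarrow\SConf{\k[n]}(\C)$ then gives \eqref{item:Cpullback}; in the notation of the proof of \Cref{thm:pullback}, $f(t)=\big([p_1^{-1}(t)]_1,\dots,[p_m^{-1}(t)]_1,\{0\}\big)$.

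Part \eqref{item:coef} then follows by composing once more with \Cref{rem:unorderpull} (applied with $X=\C$), which presents $\SConf{\kplus[n]}(\C)\to\SConf{\k[n]}(\C)$ as a pullback of $\SConf{(n,1)}(\C)\to\Conf_n(\C)/\pg n$ with $n=k_1+\dots+k_m+1$, so that $\coefmap(t)=[p_1^{-1}(t)]_1\cup\dots\cup[p_m^{-1}(t)]_1\cup\{0\}$. For part \eqref{item:root}, \Cref{rem:orderedpull} tells us that when $\P(\A_Y)$ is a TM-ideal the composition produced by \Cref{thm:pullback} is $\k=(1,\dots,1)$, so $\SConf{\k}(\C^\times)=\Conf_m(\C^\times)$ and $\pr$ is a pullback of the ordered Fadell--Neuwirth bundle $\Conf_{m+1}(\C^\times)\to\Conf_m(\C^\times)$; applying the ordered form of the observation (with $X=\C$, $x_0=0$, $k=m$) and setting $n:=m+1$ then realizes $\pr$ as the pullback of $\Conf_{n+1}(\C)\to\Conf_n(\C)$ along $\rootmap(t)=\big([p_1^{-1}(t)]_1,\dots,[p_m^{-1}(t)]_1,0\big)$, each $[p_i^{-1}(t)]_1$ now being a single point of $\C^\times$.

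I do not expect a genuine obstacle beyond \Cref{thm:pullback} itself; what remains is careful bookkeeping of which configuration space carries which block structure, together with a couple of routine cartesian-square verifications. The one point that merits attention is matching fibers: the fiber of $\pr$ over $t\in M(\A/Y)$ is $\C^\times$ with the $\sum_i k_i$ first coordinates of the points of $\bigcup_i p_i^{-1}(t)$ removed, equivalently $\C$ with those points \emph{and} $0$ removed. This is precisely the fiber of $\pi\colon\SConf{\kplus[n]}(\C)\to\SConf{\k[n]}(\C)$ over the image point, and it explains the extra entry $1$ in $\k[n]$ (and why $n=\sum_i k_i+1$ rather than $\sum_i k_i$): the point $0$ must be carried along as an additional marked, respectively unordered, point of the configuration.
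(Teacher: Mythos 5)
Your proposal is correct and follows essentially the same route as the paper: invoke \Cref{thm:pullback} to get $g$, post-compose with the map adjoining $\{0\}$ as an extra block (the paper's map $z$), and then with the forgetful map to the unordered configuration space (the paper's $w$ from \Cref{rem:unorderpull}), with part~\eqref{item:root} handled exactly as you describe by specializing to the trivial composition via \Cref{rem:orderedpull}. The only difference is that you spell out explicitly why the middle square is cartesian, which the paper leaves as a routine check.
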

\begin{proof}
From \Cref{thm:pullback}, we have a composition $\k$ and map $g:M(\A/Y)\to\Conf^{\k}(\C^\times)$ through which we can pull back the bundle $\Conf^{\kplus}(\C^\times)\to \Conf^{\k}(\C^\times)$ to the bundle $\pr$, as in the lefthand square of \Cref{fig:toricpullback}. 
\begin{figure}[ht]
\begin{tikzpicture}
\node (A) at (-1,2) {$M(\A)$};
\node (B) at (-1,0) {$M(\A/Y)$};
\node (k) at (2,0) {$\SConf{\k}(\C^\times)$};
\node (1) at (2,2) {$\SConf{\kplus}(\C^\times)$};
\node (C) at (5,0) {$\SConf{\kplus}(\C)$};
\node (C+) at (5,2) {$\SConf{(\k,1,1)}(\C)$};
\node (D) at (8,0) {$\SConf{(n)}(\C)$};
\node (D+) at (8,2) {$\SConf{(n,1)}(\C)$};
\draw[->] (A) -- node[left]{$\pr$} (B) ;
\draw[->] (A) -- (1) ;
\draw[->] (B) -- node[above]{$g$} (k) ;
\draw[->] (1) -- (k) ;
\draw[->] (k) -- node[above]{$z$} (C) ;
\draw[->] (C) -- node[above]{$w$} (D) ;
\draw[->] (1) -- (C+) ;
\draw[->] (C+) -- (D+) ;
\draw[->] (C+) -- (C) ;
\draw[->] (D+) -- (D) ;
\end{tikzpicture}
\caption{Pullback diagram of \Cref{thm:toricpullback}}
\label{fig:toricpullback}
\end{figure}
We further have a continuous map $z\colon\SConf{\k}(\C^\times)\to \SConf{\kplus}(\C)$, given by $(S_1,\dots,S_m)\mapsto (S_1,\dots,S_m,0)$, making the middle square of \Cref{fig:toricpullback} a pullback diagram. Letting $n=k+1$, from \Cref{rem:unorderpull}, we also have a map $w \colon \SConf{\kplus}(\C) \to  \SConf{(n)}(\C) = \Conf_n(\C)/\pg{n}$ making the righthand square a pullback.

Parts \eqref{item:Cpullback} and \eqref{item:coef} of the theorem follow with $\k[n]=\kplus$, $f=z\circ g$, and $\coefmap=w\circ z \circ g$.

For part \eqref{item:root}, as noted in \Cref{rem:orderedpull}, if $\P(\A_Y)$ is a TM-ideal, the composition of \Cref{thm:pullback} is the trivial composition $\k=(1,1,\dots,1)$. Consequently, 
$\k[n]=\kplus$ is trivial as well, and $\SConf{\kplus}(\C)$ is the ordered configuration space $\Conf_n(\C)$. Setting $\rootmap=z \circ g$ in this instance completes the proof.
\end{proof}

\begin{corollary}\label{cor:section}
Let $\A$ be a toric arrangement, and suppose $\P(\A_Y)$ is a corank-one M-ideal of $\P(\A)$.
Then the associated fiber bundle $\pr:M(\A)\to M(\A/Y)$ admits a section.
\end{corollary}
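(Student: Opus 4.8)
The plan is to reduce, via the pullback description established in \Cref{thm:toricpullback}, to the observation that the relevant configuration-space bundle over the plane has an evident section, and then transport that section back along the pullback square.

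Concretely, by part~\eqref{item:coef} of \Cref{thm:toricpullback} there is an integer $n$ and a map $\coefmap\colon M(\qA)\to\Conf_n(\C)/\pg{n}$ along which $\pr$ is pulled back from $\pi\colon\SConf{(n,1)}(\C)\to\SConf{(n)}(\C)=\Conf_n(\C)/\pg{n}$, so it suffices to exhibit a section of $\pi$. Representing a point of $\SConf{(n)}(\C)$ as an $n$-element subset $S\subseteq\C$, I would set
\[
\sigma(S) \;=\; \Bigl(\,S,\ \bigl\{\,1+\max_{s\in S}|s|\,\bigr\}\Bigr).
\]
This is well defined because $1+\max_{s\in S}|s|$ is a real, positive number whose modulus exceeds that of every element of $S$, hence does not lie in $S$; and it is continuous because $S\mapsto\max_{s\in S}|s|$ is induced by the continuous symmetric function $(x_1,\dots,x_n)\mapsto\max_i|x_i|$ on $\Conf_n(\C)$. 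By construction $\pi\circ\sigma=\id$.

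Finally, the pullback of a bundle admitting a section admits a section. In the pullback square of part~\eqref{item:coef} of \Cref{thm:toricpullback}, I would apply the universal property of the pullback to the pair of maps $\id_{M(\qA)}\colon M(\qA)\to M(\qA)$ and $\sigma\circ\coefmap\colon M(\qA)\to\SConf{(n,1)}(\C)$ out of $M(\qA)$; these are compatible since $\pi\circ(\sigma\circ\coefmap)=(\pi\circ\sigma)\circ\coefmap=\coefmap$, so they induce a map $s\colon M(\qA)\to M(\A)$ with $\pr\circ s=\id_{M(\qA)}$, i.e.\ a section of $\pr$.

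There is no real obstacle here: the only substantive ingredient is the existence of $\sigma$, which works precisely because $\C$ is unbounded in the radial direction, so that every finite configuration has a canonical point in its complement -- in contrast to the analogous bundle for configurations in a compact manifold such as a torus. The same remark shows one could instead use part~\eqref{item:Cpullback} of \Cref{thm:toricpullback}, applying the far-away-point construction to $\pi\colon\SConf{\kplus}(\C)\to\SConf{\k}(\C)$; or one could argue directly on the torus side, sending $t\in M(\qA)$ to the point of $M(\A)$ whose first coordinate is $1+\max\{|y|:y\in g(t)\}\in\C^\times$, for the map $g$ of \Cref{thm:pullback}.
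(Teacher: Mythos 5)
Your proof is correct and follows essentially the same route as the paper: reduce via the pullback description of \Cref{thm:toricpullback} (or equivalently \Cref{rem:unorderpull}) to constructing a section of $\pi\colon\SConf{(n,1)}(\C)\to\Conf_n(\C)/\pg{n}$, and produce that section by placing a new point at radius $1+\max_{s\in S}|s|$. The only difference is cosmetic — you spell out the universal-property step that a pullback of a bundle with a section has a section, which the paper leaves implicit.
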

\begin{proof}
By \Cref{thm:toricpullback} and \Cref{rem:unorderpull}, we need only check that the bundle $\pi:\SConf{(k,1)}(\C)\to\Conf_k(\C)/\pg{k}$ has a section.
A section of $\pi$ is obtained by mapping a set $S=\{x_1,\dots,x_k\}$ of $k$ distinct points in $\C$ to the configuration $(S,\max\{|x_1|,\dots,|x_k|\}+1)$ in $\SConf{(k,1)}(\C)$. 
\end{proof}

\begin{remark}
The existence of a section can be extended to supersolvable abelian arrangements when $\G$ is noncompact, in a similar fashion to \Cref{cor:section}. 
When  $\G$ is compact, if there is an $i$ with $k_i=1$, then the bundle $\Conf^{\kplus}(\G)\to\Conf^{\k}(\G)$ has a section, since
for a configuration $(S_1,\dots,S_m)$,  we can add a point near the (unique) point in $S_i$.
This section can then be pulled back to a section {of} $M(\A)\to M(\A/Y)$ as long as there is some $H\in\A\smallsetminus\A_Y$ such that $H\cap Y$ is connected.
\end{remark}

\subsection{Polynomials} \label{subsec:polys}

Parts \eqref{item:coef} and \eqref{item:root} of \Cref{thm:toricpullback} bring to the fore the relationship between (strictly) supersolvable toric arrangements and Hansen's theory of polynomial coverings \cite{Hansen} and the associated braid bundles of \cite{CSmonodromy}. In these situations, choices of the pullback maps $\coefmap$ and $\rootmap$ may be obtained directly from the characters defining the toric arrangement.

\begin{remark}\label{confdisc}
The unordered configuration space $\Conf_n(\C)/\pg{n}$ may be realized as the complement of the discriminant in $\C^n$, the space of monic complex polynomials of degree $n$ with distinct roots. With this identification, the covering map $\Conf_n(\C)\to \Conf_n(\C)/\pg{n}$ takes an $n$-tuple $(x_1,\dots,x_n)$ of distinct complex numbers to the polynomial (in $z$) with these roots, namely $\prod_{i=1}^n (z-x_i)$.
\end{remark}

Now let $\A$ be an essential supersolvable toric arrangement in $(\C^\times)^{d+1}$, with $\P(\A_Y)$ a corank-one M-ideal of $\P(\A)$. Write $\A\smallsetminus\A_Y=\{H_1,\dots,H_l\}$. Choosing coordinates $(x_1,\dots,x_d,y)=(\mathbf{x},y)$ appropriately, for every $j=1,\ldots l$ the hypersurface $H_j$ is defined by 
\[
\begin{aligned}
H_j&=\{(x_1,\dots,x_d,y) \in (\C^\times)^{d+1} \mid y^{m_{j,0}} - \mu_j x_1^{m_{j,1}}x_2^{m_{j,2}} \cdots x_d^{m_{j,d}}=0\}\\
&=\{(\mathbf{x},y) \in (\C^\times)^{d+1} \mid y^{m_{j,0}} - \mu_j\mathbf{x}^{\mathbf{m}_j}=0\},
\end{aligned}
\]
where $m_{j,0}\in \Z_{>0}$ is a positive integer, $\mathbf{m}_j=(m_{j,1},\dots,m_{j,d}) \in \Z^d$, and $\mu_j$ is a root of unity (cf.~\Cref{rem:rou}). Since $\A$ is supersolvable over $\B=\A/Y$, the map $f\colon M(\B) \times \C \to \C$ given by
\[
f(\mathbf{x},y) = y \prod_{j=1}^l(y^{m_{j,0}} - \mu_j\mathbf{x}^{\mathbf{m}_j}) = y^n + \sum_{i=1}^n a_i(\mathbf{x}) y^{n-i}
\]
is a simple Weierstrass polynomial on $M(\B)$ in the sense of \cite{Hansen}: the coefficient maps $a_i\colon M(\B) \to \C$ are continuous, and, for each $\mathbf{x}\in M(\B)$, the polynomial $f(\mathbf{x},y) \in \C[y]$ has distinct roots. 
Note that the coefficient map $a_n$ is the constant function $a_n(\mathbf{x})=0$. 
Sending $\mathbf{x}$ to the set of roots of the polynomial $f(\mathbf{x},y) = y^n + \sum_{i=1}^n a_i(\mathbf{x}) y^{n-i}$ defines a map  
$\coefmap \colon M(\B) \to \Conf_n(\C)/\pg{n}$. Identifying the unordered configuration space with the complement of the discriminant in $\C^n$ via \Cref{confdisc}, this corresponds to sending $\mathbf{x}$ to (the coefficients of) the polynomial $f(\mathbf{x},y)$. Following Hansen \cite{Hansen}, we call $\coefmap \colon M(\B) \to \Conf_n(\C)/\pg{n}$ the \textbf{coefficient map}.

If, moreover, $\P(\A_Y)$ is a TM-ideal, then factoring and reindexing as needed, we can assume that $m_{j,0}=1$ for each $j$. In this instance, the simple Weierstrass polynomial $f$ is completely solvable, meaning it factors as
\[
f(\mathbf{x},y) = y \prod_{j=1}^l(y - \mu_j\mathbf{x}^{\mathbf{m}_j}) = \prod_{i=1}^n (y-b_i(\mathbf{x})),
\]
where $\mu_j$ is some root of unity, with continuous root maps $b_i\colon M(\B) \to \C$. Since the roots are distinct, this defines a \textbf{root map} $\rootmap \colon M(\B) \to \Conf_n(\C)$, given by 
$\mathbf{x} \mapsto (b_1(\mathbf{x}),\dots,b_n(\mathbf{x}))$. Note that $n=l+1$ in this instance.

\begin{remark}\label{rem:twoabs} The maps $\k[a]$ and $\k[b]$ defined here via the polynomial $f$ are instances of the corresponding maps of \Cref{thm:toricpullback}.
\end{remark}

These considerations yield the following versions of parts \eqref{item:coef} and \eqref{item:root} of \Cref{thm:toricpullback}, which may be checked directly. 
Recall from \S\ref{subsec:Artin} that the monodromy of a bundle $p\colon E \to B$, with fiber $F$ and a (fixed) section, is the homomorphism from $\pi_1(B)$ to $\Aut(\pi_1(F))$, the group of (right) automorphisms of $\pi_1(F)$, giving the action of the fundamental group of the base on that of the fiber. 
Also recall the Artin representation discussed in \S\ref{subsec:Artin}, and that the homomophism on fundamental groups induced by a map $f$ is denoted by $f_\sharp$. 
\begin{proposition} \label{prop:coef/root}
Let $\A$ be a toric arrangement, and suppose $\P(\A_Y)$ is a corank-one M-ideal of $\P(\A)$. 
Let $f$ be the associated Weierstress polynomial with $\B=\A/Y$.
\begin{enumerate}
\item \label{item:coef1} The bundle  
$\pr:M(\A)\to M(\B)$ is the pullback of $\pi\colon \SConf{(n,1)}(\C) \to \Conf_n(\C)/\pg{n}$ along the coefficient map 
$\coefmap \colon M(\B) \to \Conf_n(\C)/\pg{n}$, given by $\mathbf{x} \mapsto y \prod_{j=1}^l(y^{m_{j,0}} - \mathbf{x}^{\mathbf{m}_j})$.
The monodromy of the bundle $\pr:M(\A)\to M(\B)$ factors as $\alpha_n \circ \coefmap_\sharp$, where $\alpha_n\colon B_n \to \Aut(F_n)$ is the Artin representation.
\item \label{item:root1} If $\P(\A_Y)$ is a TM-ideal, the bundle  
$\pr:M(\A)\to M(\B)$ is the pullback of $\pi\colon \Conf_{n+1}(\C) \to \Conf_n(\C)$ along the root map $\rootmap \colon M(\B) \to \Conf_n(\C)$, given by 
$\mathbf{x} \mapsto (0,\mu_1\mathbf{x}^{\mathbf{m}_1},\dots,\mu_l\mathbf{x}^{\mathbf{m}_l})$. The monodromy of the bundle $\pr:M(\A)\to M(\B)$ factors as $\hat{\alpha}_n  \circ \rootmap_\sharp$, where $\hat{\alpha}_n\colon P_n \to \Aut(F_n)$ is the restriction of the Artin representation.
\end{enumerate}
\end{proposition}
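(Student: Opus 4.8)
\textbf{Proof plan for \Cref{prop:coef/root}.}

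The plan is to identify the two pullback squares of \Cref{thm:toricpullback}\eqref{item:coef}, \eqref{item:root} with the concrete ones built in \S\ref{subsec:polys} from the Weierstrass polynomial $f$, and then to invoke the monodromy computations already recorded in \S\ref{subsec:Artin}. First I would note that \Cref{rem:twoabs} already asserts that the maps $\coefmap$ and $\rootmap$ defined via $f$ are instances of the maps produced in \Cref{thm:toricpullback}; so the pullback assertions in \eqref{item:coef1} and \eqref{item:root1} are literally restatements of parts \eqref{item:coef} and \eqref{item:root} of that theorem, once one checks that the explicit formulas $\mathbf{x}\mapsto y\prod_j(y^{m_{j,0}}-\mathbf{x}^{\mathbf{m}_j})$ and $\mathbf{x}\mapsto(0,\mu_1\mathbf{x}^{\mathbf{m}_1},\dots,\mu_l\mathbf{x}^{\mathbf{m}_l})$ are the coefficient and root maps of \S\ref{subsec:polys} (in the TM-ideal case, after the reindexing that makes $m_{j,0}=1$). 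This identification is the routine verification alluded to in the sentence preceding the proposition.

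For the monodromy statements, the key point is that monodromy is natural under pullback: if a bundle $E\to B$ is the pullback of $E'\to B'$ along $g\colon B\to B'$, then the monodromy of $E\to B$ is the composite of $g_\sharp\colon\pi_1(B)\to\pi_1(B')$ with the monodromy of $E'\to B'$. So for \eqref{item:coef1} I would combine this with \Cref{rem:unorderpull}/\Cref{confdisc}, which realize $\pi\colon\SConf{(n,1)}(\C)\to\Conf_n(\C)/\pg{n}$ as the bundle $p_n\colon Y^{n+1}\to B^n$ of \cite[\S2]{CSmonodromy}, whose monodromy is precisely the Artin representation $\alpha_n\colon B_n\to\Aut(F_n)$ of \eqref{eq:FullArtin}; here one uses that $\pi_1$ of the unordered configuration space is $B_n$ and that the fiber is a plane with $n$ points removed, so its fundamental group is $F_n$. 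Hence the monodromy of $\pr\colon M(\A)\to M(\B)$ is $\alpha_n\circ\coefmap_\sharp$. For \eqref{item:root1}, the TM-ideal hypothesis forces the composition to be trivial (as recorded in \Cref{rem:orderedpull} and reused in the proof of \Cref{thm:toricpullback}), so the relevant bundle is the Fadell--Neuwirth bundle $\Conf_{n+1}(\C)\to\Conf_n(\C)$, whose monodromy is the restriction $\hat\alpha_n\colon P_n\to\Aut(F_n)$ of \eqref{eq:PureArtin} with $\pi_1(\Conf_n(\C))=P_n$; pulling back along $\rootmap$ gives monodromy $\hat\alpha_n\circ\rootmap_\sharp$.

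The only genuine content beyond bookkeeping is checking that the explicit polynomial-theoretic maps coincide (up to the relevant identifications) with the abstractly-constructed pullback maps $f$, $\coefmap$, $\rootmap$ of \Cref{thm:pullback} and \Cref{thm:toricpullback}; concretely, that sending $\mathbf{x}$ to the set of roots of $f(\mathbf{x},y)$ agrees with the map $t\mapsto([p_1^{-1}(t)]_1,\dots)$ followed by the maps $z$ and $w$ of \Cref{fig:toricpullback}, after tracking through the coordinate change $(\C^\times)^{d+1}\cong\C^\times\times(T/Y)$ and \Cref{rem:rou} (which describes the connected components of each $H_{\chi}$, hence the roots $\mu_j\mathbf{x}^{\mathbf{m}_j}$, and in the multi-component case accounts for the exponents $m_{j,0}$). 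This is the step I expect to be the main obstacle, though it is more a matter of careful notation-matching than of any real difficulty — indeed the proof of \Cref{thm:toricpullback} already does the conceptual work, and the remark ``which may be checked directly'' signals that we intend to leave this verification to the reader. I would therefore keep the written proof short: state the identification of maps, cite \Cref{rem:twoabs}, invoke naturality of monodromy under pullback, and point to \S\ref{subsec:Artin} for the identification of the two base bundles' monodromies with $\alpha_n$ and $\hat\alpha_n$.
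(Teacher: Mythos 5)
Your proposal is correct and supplies exactly the verification the paper defers with the phrase ``may be checked directly'': identify the explicit polynomial-defined maps with the abstract ones of \Cref{thm:toricpullback} via \Cref{rem:twoabs}, invoke naturality of monodromy under pullback, and use the identifications in \S\ref{subsec:Artin} of the monodromies of $\SConf{(n,1)}(\C)\to\Conf_n(\C)/\pg{n}$ and $\Conf_{n+1}(\C)\to\Conf_n(\C)$ with $\alpha_n$ and $\hat\alpha_n$. This matches the intended argument.
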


\begin{example}\label{ex:A3}
\goaway{
Consider the toric arrangement $\A$ in $(\C^\times)^2$ with character matrix
\[
\begin{pmatrix}
1 &  1 & 0 \\
0 & -2 & 1
\end{pmatrix}
\]
The Hasse diagram for the poset of layers $\P(\A)$ is depicted in \Cref{fig:SSposet}.

\begin{figure}[ht]
\begin{tikzpicture}[scale=.7]
\node (T) at (0,0) {\scriptsize $T$};
\node (1) at (-2,2) {\scriptsize $H_1$};
\node (2) at (0,2) {\scriptsize $H_2$};
\node (3) at (2,2) {\scriptsize $H_3$};
\node (u) at (-1,4) {\scriptsize $(1,-1)$};
\node (v) at (1,4) {\scriptsize $(1,1)$};
\foreach \x in {1,2,3} {
\draw[-] (T) -- (\x) ;
\draw[-] (\x) -- (v) ;
};
\draw[-] (1) -- (u) -- (2);
\end{tikzpicture}
\caption{Poset of layers for a toric arrangement that is supersolvable but not strictly supersolvable.}
\label{fig:SSposet}
\end{figure}

The subposet $\{T,H_1\}$ is an M-ideal, but it is not a TM-ideal, since the intersection $H_1\cap H_2$ is not connected. 
The quotient by $H_1$ then induces a fiber bundle $\pr\colon M(\A)\to M(\A/H_1)=\C\smallsetminus\{0,1\}$.
This bundle can be pulled back from a configuration space bundle through any composition of the maps below:
\begin{center}
\begin{tikzpicture}
\node (M) at (0,1) {$\C\smallsetminus\{0,1\}$};
\node (C*) at (3,1) {$\SConf{(2,1)}(\C^\times)$};
\node (C) at (6,1) {$\SConf{(2,1,1)}(\C)$};
\node (U) at (9,1) {$\Conf_4(\C)/\pg{4}$};
\draw[->] (M) -- node[above]{$g$} (C*);
\draw[->] (C*) -- node[above]{$z$} (C);
\draw[->] (C) -- node[above]{$w$} (U);
\node (x) at (0,0) {$x$};
\node (g) at (3,0) {$(\{\pm\sqrt{x}\},1)$};
\node (z) at (6,0) {$(\{\pm\sqrt{x}\},1,0)$};
\node (u) at (9,0) {$\{\sqrt{x},-\sqrt{x},1,0\}$};
\draw[|->] (x)--(g);
\draw[|->] (g)--(z);
\draw[|->] (z)--(u);
\end{tikzpicture}
\end{center}
}

Recall the toric arrangement $\A$ from \Cref{ex:A}, which has by \Cref{ex:A2} a TM-ideal $\P(\A_{H_0})$ and an M-ideal $\P(\A_{H_3})$.

The quotient by $H_0$ then induces a fiber bundle $\pr\colon M(\A)\to M(\A/H_0)=\C\smallsetminus\{0,-1,1\}$, which can be pulled back from an ordered, or unordered, configuration space bundle:
\begin{center}
\begin{tikzpicture}
\node (M) at (0,1) {$\C\smallsetminus\{0,-1,1\}$};
\node (C*) at (3,1) {$\Conf_2(\C^\times)$};
\node (C) at (6,1) {$\Conf_3(\C)$};
\node (U) at (9,1) {$\Conf_3(\C)/\pg{3}$};
\draw[->] (M) -- node[above]{$g$} (C*);
\draw[->] (C*) -- node[above]{$z$} (C);
\draw[->] (C) -- node[above]{$w$} (U);
\node (x) at (0,0) {$x$};
\node (g) at (3,0) {$(x^2,1)$};
\node (z) at (6,0) {$(0,x^2,1)$};
\node (u) at (9,0) {$\{0,x^2,1\}$};
\draw[|->] (x)--(g);
\draw[|->] (g)--(z);
\draw[|->] (z)--(u);
\end{tikzpicture}
\end{center}
In particular, the root map $\rootmap\colon \C\smallsetminus\{0,-1,1\} \to \Conf_3(\C)$ is given by $\rootmap(x)=(0,x^2,1)$. The induced homomorphism 
$\rootmap_\sharp\colon F_3 \to P_3$ may be obtained from the calculations of \S\ref{subsec:rank2circ} below. If $F_3=\langle \xs_0,\xs_1,\xs_2\rangle$, where $\xs_0,\xs_1,\xs_2$ are represented by based loops $\gamma_0$, $\gamma_1$, and $\gamma_2$ about $0$, $-1$, and $1$ respectively (see \S\ref{subsec:r2pi1} for explicit formulas), we have 
\begin{equation}\label{bsharpex}
\rootmap_\sharp(\xs_0)=a_{1,2}^2,\quad \rootmap_\sharp(\xs_1)=a_{1,2}^{}a_{2,3}^{}a_{1,2}^{-1},\quad \rootmap_\sharp(\xs_2)=a_{2,3}^{}.
\end{equation}
The quotient by $H_3$ induces a fiber bundle $\pr\colon M(\A)\to M(\A/H_3)=\C\smallsetminus\{0,1\}$, which can be pulled back from a configuration space bundle through $g$, $z\circ g$, or $w\circ z\circ g$, as below:
\begin{center}
\begin{tikzpicture}
\node (M) at (0,1) {$\C\smallsetminus\{0,1\}$};
\node (C*) at (4,1) {$\SConf{(2,1,1)}(\C^\times)$};
\node (C) at (8,1) {$\SConf{(2,1,1,1)}(\C)$};
\node (U) at (12,1) {$\Conf_5(\C)/\pg{5}$};
\draw[->] (M) -- node[above]{$g$} (C*);
\draw[->] (C*) -- node[above]{$z$} (C);
\draw[->] (C) -- node[above]{$w$} (U);
\node (x) at (0,0) {$y$};
\node (g) at (4,0) {$(\{\pm \sqrt{y}\},-1,1)$};
\node (z) at (8,0) {$(\{\pm \sqrt{y}\},-1,1,0)$};
\node (u) at (12,0) {$\{-\sqrt{y},\sqrt{y},-1,1,0\}$};
\draw[|->] (x)--(g);
\draw[|->] (g)--(z);
\draw[|->] (z)--(u);
\end{tikzpicture}
\end{center}
The coefficient map $\coefmap \colon \C\smallsetminus\{0,1\} \to \Conf_5(\C)/\pg{5}$ is given by (the roots of) $\coefmap(y)=x(x^2-1)(x^2-y) \in \C[x]$. 
Let $\pi_1(\C\smallsetminus\{0,1\})=F_2=\langle \us_0,\us_1\rangle$, where $\us_j$ is represented by a counterclockwise circular path $\mu_j$ of radius $1/2$ based at $1/2$ and centered at $j$. Lifts of the loops $\coefmap \circ \mu_j$ in $\Conf_5(\C)$ are depicted in Figure \ref{fig:BraidReps}. These paths may be used to check that the homomorphism $\coefmap_\sharp\colon F_2 \to B_5$ induced by $\coefmap$ is given by $\coefmap_\sharp(\us_0)=\sigma_2\sigma_3\sigma_2$ and $\coefmap_\sharp(\us_1)=\sigma_1^2\sigma_4^2$.
\begin{figure}[h] 
\includegraphics[scale=.4]{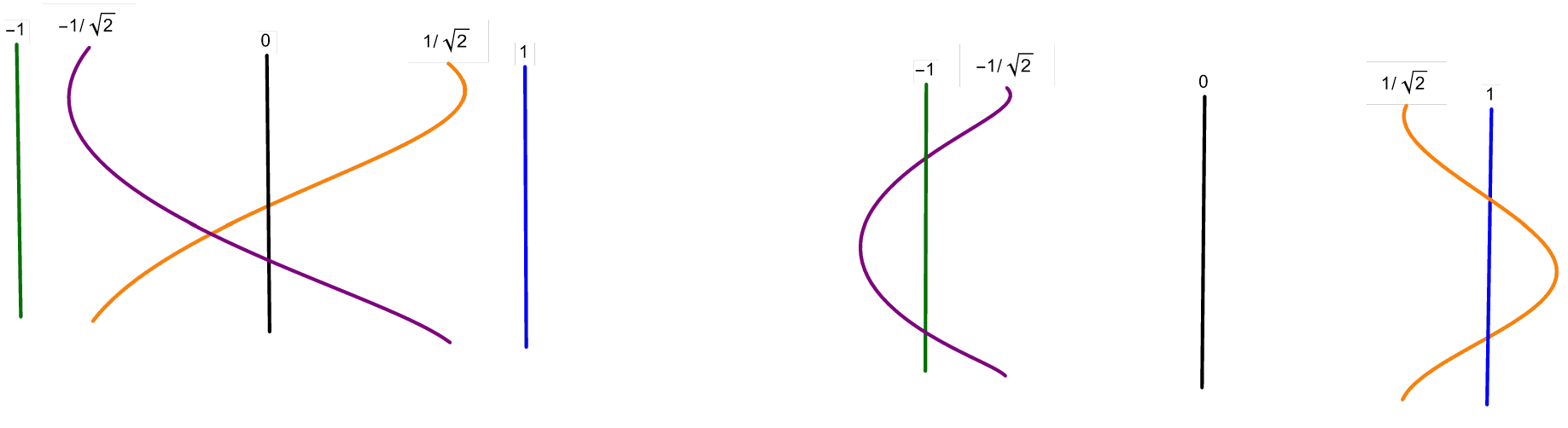}
\caption{Paths in $\Conf_5(\C)$ representing $\coefmap_\sharp(\us_0)$ and $\coefmap_\sharp(\us_1)$.
} \label{fig:BraidReps} 
\end{figure}
\end{example}

\subsection{Fundamental Group} \label{subsec:group}
For a supersolvable toric arrangement $\A$, the results of \S\S\ref{subsec:tbundles}--\ref{subsec:polys} may easily be used to see that the complement $M(\A)$ is a $K(G,1)$-space, where $G=G(\A)=\pi_1(M(\A))$, as shown in \cite[Corollary B]{BD}. These results have further topological and group theoretic implications. We begin with several properties of the group $G$.

An iterated semidirect product of finitely generated free groups $G=F_{n_r} \rtimes\bigl(F_{n_{r-1}} (\rtimes \dots \rtimes (F_{n_2} \rtimes F_{n_1}\bigr)$ is said to be an \textbf{almost-direct product} if the action of the group $\rtimes_{i=1}^{j} F_{n_i}$ on $H_1(F_{n_k};\Z)$ is trivial
for each $j$ and $k$ with $1\le j<k \le r$. That is, each of the homomorphisms $\rtimes_{i=1}^{k-1} F_{n_i} \to \Aut(F_{n_k})$ determining the iterated semidirect product structure of $G$ has image contained in the subgroup of IA-automorphisms, 
consisting of automorphisms which induce the identity on the abelianization of $F_{n_k}$.

\begin{corollary} \label{cor:almostdirect}
If $\A$ is a supersolvable toric arrangement, then the fundamental group of the complement $\pi_1(M(\A))$ is an iterated semidirect product of free groups, the constituent free groups acting on one another by braid automorphisms.
If $\A$ is a strictly supersolvable toric arrangement, then $\pi_1(M(\A))$ is an almost-direct product of free groups, the constituent free groups acting on one another by pure braid automorphisms.
\end{corollary}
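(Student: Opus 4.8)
The plan is to induct on the rank $d$ of $\A$. By \Cref{rem:essential} we may assume $\A$ essential; the general case follows since $M(\A)\cong M(\A')\times(\C^\times)^{d-r}$ and $\Z^{d-r}=\pi_1((\C^\times)^{d-r})$ is an iterated (trivially acting) semidirect product of copies of $F_1=\Z$. For $d=1$, an essential rank-$1$ toric arrangement has $M(\A)\cong\C^\times\smallsetminus\{k\text{ points}\}$, homotopy equivalent to a wedge of $k+1$ circles, so $\pi_1(M(\A))$ is free --- vacuously an almost-direct product. For $d\ge2$, set $Y=Y_{d-1}$, so that $\P(\A_Y)$ is a corank-one M-ideal of $\P(\A)$, a TM-ideal in the strictly supersolvable case. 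By \Cref{thm:toricpullback}, $\pr\colon M(\A)\to M(\A/Y)$ is the pullback of a configuration-space bundle, hence is itself a fiber bundle whose fiber is homotopy equivalent to a finite wedge of circles (so has free fundamental group); by \Cref{cor:section} this bundle admits a section $s$.

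First I would check that the supersolvable chain of $\A$ descends to $\A/Y$: under the poset isomorphism $\P(\A_{Y_{d-1}})\cong\P(\A/Y_{d-1})$, the chain $\{\zero\}\subset\P(\A_{Y_1})\subset\cdots\subset\P(\A_{Y_{d-1}})$ becomes a supersolvable (resp.\ strictly supersolvable) chain for the rank-$(d-1)$ toric arrangement $\A/Y$, the (T)M-ideal conditions being preserved by this isomorphism. The inductive hypothesis then gives $\pi_1(M(\A/Y))\cong F_{n_{d-1}}\rtimes(\cdots\rtimes F_{n_1})$, an iterated semidirect product of free groups, which is an almost-direct product when $\A$ is strictly supersolvable.

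Next I would assemble the extension. The section $s$ splits the homotopy exact sequence of $\pr$; in particular $\pr_*$ is split surjective on $\pi_2$, so the connecting map $\pi_2(M(\A/Y))\to\pi_1(F)$ vanishes and we obtain a split short exact sequence $1\to\pi_1(F)\to\pi_1(M(\A))\to\pi_1(M(\A/Y))\to 1$. Setting $F_{n_d}:=\pi_1(F)$, this reads $\pi_1(M(\A))\cong F_{n_d}\rtimes\pi_1(M(\A/Y))$, which with the inductive description of the base exhibits $\pi_1(M(\A))$ as an iterated semidirect product of free groups. The action of $\pi_1(M(\A/Y))$ on $F_{n_d}=\pi_1(F)$ is the monodromy of $\pr$, which by \Cref{prop:coef/root}\eqref{item:coef1} factors through the Artin representation as $\alpha_n\circ\coefmap_\sharp$; hence $F_{n_d}$ is acted on by braid automorphisms, and --- the same argument applying at each step of the chain --- so is each constituent free group, which proves the first assertion.

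In the strictly supersolvable case, \Cref{prop:coef/root}\eqref{item:root1} instead gives that the monodromy factors as $\hat\alpha_n\circ\rootmap_\sharp$ through the pure braid group $P_n$. Since pure braid automorphisms are IA-automorphisms (\S\ref{subsec:Artin}), all of $\pi_1(M(\A/Y))=F_{n_{d-1}}\rtimes(\cdots\rtimes F_{n_1})$, hence every subgroup $F_{n_j}\rtimes(\cdots\rtimes F_{n_1})$ with $j\le d-1$, acts trivially on $H_1(F_{n_d};\Z)$; the remaining triviality conditions among the lower free groups hold by induction. Thus $\pi_1(M(\A))$ is an almost-direct product of free groups acting by pure braid automorphisms, completing the induction. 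I expect the main obstacle to be the bookkeeping of the inductive step --- confirming that the (strict) supersolvable chain descends to $\A/Y$ through $\P(\A_Y)\cong\P(\A/Y)$, and that the factorizations of \Cref{prop:coef/root} supply the IA condition at \emph{every} level of the tower, not only at its top --- the substantive topology being already packaged in \Cref{thm:toricpullback}, \Cref{cor:section}, and \Cref{prop:coef/root}.
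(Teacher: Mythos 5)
Your proposal is correct and follows essentially the same route as the paper: induction on rank, using \Cref{thm:toricpullback}, \Cref{cor:section}, and \Cref{prop:coef/root} to split the fundamental-group extension and identify the monodromy as (pure) braid automorphisms. You spell out a few points the paper leaves implicit — the reduction to essential arrangements, the descent of the supersolvable chain to $\A/Y$, and the vanishing of the $\pi_2$ connecting map via the section — but these are amplifications of the same argument rather than a different strategy.
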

\begin{proof}
We proceed by induction on the rank of $\A$. As the base case is clear, assume that the rank of $\A$ is greater than one. 
Since $\A$ is supersolvable, we have by \Cref{prop:coef/root} 
a supersolvable arrangement $\B$ and fiber bundle $M(\A)\to M(\B)$ whose fiber $F$ is homeomorphic to $\C$ with finitely many points removed. 
The associated long exact sequence on homotopy groups reduces to a short exact sequence on the fundamental groups
\[ 1 \longrightarrow \pi_1(F) \longrightarrow \pi_1(M(\A)) \longrightarrow \pi_1(M(\B)) \longrightarrow 1 \]
This short exact sequence splits by \Cref{cor:section}, implying that $\pi_1(M(\A))$ is a semidirect product of $\pi_1(M(\B))$ (an iterated semidirect product of free groups with actions given by braid automorphisms, by induction) and $\pi_1(F)$ (a free group). By  \Cref{prop:coef/root}\,\eqref{item:coef1}, the monodromy of the bundle $M(\A)\to M(\B)$ factors through a braid group, so $\pi_1(M(\B))$ acts on $\pi_1(F)$ by braid automorphisms.

If we moreover have that $\A$ is strictly supersolvable, then we can choose $\B$ to also be strictly supersolvable and the fiber bundle $M(\A) \to M(\B)$ is pulled back from the ordered configuration space bundle, 
via \Cref{thm:toricpullback}\,\eqref{item:root}.  By induction,  $\pi_1(M(\B))$ is an almost-direct product of free groups with actions given by pure braid automorphisms. By  \Cref{prop:coef/root}\,\eqref{item:root1}, the monodromy of the bundle $M(\A)\to M(\B)$ factors through a pure braid group, so $\pi_1(M(\B))$ acts on $\pi_1(F)$ by pure braid automorphisms, 
which as noted in \S\ref{subsec:Artin} act trivially on homology. 
\end{proof}

Recall that a discrete group is said to be linear if it admits a faithful, finite-dimensional linear representation (over some field). From work of Bigelow \cite{Big} and Krammer \cite{Kra}, it is known that the Artin braid group is linear. This, together with the above, can be used to establish the linearity of supersolvable toric arrangement groups. The proof given below follows that of \cite{CCP}, where supersolvable hyperplane arrangement groups were shown to be linear.
\begin{corollary}
If $\A$ is a supersolvable toric arrangement, then the fundamental group of the complement $\pi_1(M(\A))$ is a linear group.
\end{corollary}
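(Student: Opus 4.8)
The plan is to proceed by induction on the rank of $\A$, mirroring the structure of the proof of \Cref{cor:almostdirect}. The base case (rank one) is immediate, since then $M(\A)$ is a punctured plane and $\pi_1(M(\A))$ is a finitely generated free group, which is linear (e.g., it embeds in $SL_2(\Z)$). For the inductive step, \Cref{prop:coef/root} gives a supersolvable toric arrangement $\B$ and a fiber bundle $\pr\colon M(\A)\to M(\B)$ with fiber $F$ homeomorphic to a finitely punctured plane, together with the splitting from \Cref{cor:section}, so that
\[
\pi_1(M(\A)) \;\cong\; \pi_1(F) \rtimes \pi_1(M(\B)),
\]
where $\pi_1(M(\B))$ is linear by the inductive hypothesis and $\pi_1(F)$ is a finitely generated free group.

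**Next I would** invoke the key structural input: by \Cref{prop:coef/root}\,\eqref{item:coef1}, the monodromy of this bundle — that is, the action of $\pi_1(M(\B))$ on $\pi_1(F)\cong F_n$ defining the semidirect product — factors through the Artin braid group $B_n$ via $\alpha_n\circ\coefmap_\sharp$. This is precisely the situation handled in \cite{CCP}: an extension of a linear group by a finitely generated free group in which the action factors through the braid group. I would then apply the linearity criterion from that paper. The essential point is that since the conjugation action of $\pi_1(M(\B))$ on $F_n$ lands in the image of the (faithful) Artin representation $B_n\hookrightarrow\Aut(F_n)$, one may replace the semidirect product $F_n\rtimes\pi_1(M(\B))$ by a group mapping to (a subgroup of) $\Aut(F_n)$ compatibly, and then use that $B_n$ is linear by Bigelow \cite{Big} and Krammer \cite{Kra}, together with the fact that the relevant semidirect product $F_n\rtimes B_n$ — the ``braid group of the annulus'' or an analogous group — is itself linear. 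Concretely, the pullback square of \Cref{thm:toricpullback} identifies $M(\A)$ as a pullback of a braid bundle over $\Conf_n(\C)/\pg{n}$, whose total space $\SConf{(n,1)}(\C)$ has fundamental group a linear group (this is exactly the content of \cite{CCP}); the induced map on fundamental groups then exhibits $\pi_1(M(\A))$ as a subgroup of a fiber product of two linear groups over $\pi_1(M(\B))$'s image.

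**The main obstacle** — and the step requiring genuine care rather than a citation — is assembling the linear representation of the semidirect product from those of the pieces, since linearity is not closed under arbitrary extensions. The argument of \cite{CCP} circumvents this by using that the free group $F_n$ and the base group act \emph{compatibly} through $\Aut(F_n)$: one builds a representation on $H_1$ of a suitable cover (an ``expanded'' or ``reduced Burau''-type construction twisted by the linear representation of $\pi_1(M(\B))$), and checks faithfulness using the faithfulness of the Artin representation together with residual properties of $F_n$. I would follow that template verbatim, the only adaptation being that the base group is now a supersolvable toric arrangement group (linear by induction) rather than a supersolvable hyperplane arrangement group; since \cite{CCP} only uses that the base is linear and that the monodromy factors through $B_n$, both of which we have established, the argument carries over with no essential change. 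I would therefore present the proof as: set up the induction, produce the split extension with braid monodromy via \Cref{cor:section} and \Cref{prop:coef/root}, and then cite \cite{CCP} for the construction of the faithful representation of such a braid-monodromy extension of a linear group by a free group.
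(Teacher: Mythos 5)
Your proposal contains the paper's argument, but you seem not to realize that you've found it, and you've buried it under unnecessary machinery. The key passage is in your second paragraph, where you observe that the pullback square from \Cref{thm:toricpullback}\,\eqref{item:coef} realizes $\pi_1(M(\A))$ as the fiber product of $\pi_1(M(\B))$ and $\pi_1(\SConf{(n,1)}(\C))$ over $B_n=\pi_1(\Conf_n(\C)/\pg{n})$, hence as a subgroup of the direct product $\pi_1(\SConf{(n,1)}(\C))\times\pi_1(M(\B))$. Once you have this, you are done: a subgroup of a direct product of linear groups is linear, and no further construction is required. The one ingredient you have not actually nailed down is \emph{why} $\pi_1(\SConf{(n,1)}(\C))$ is linear. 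This is not ``exactly the content of \cite{CCP}''; the paper's own justification is more elementary and worth spelling out: $\pi_1(\SConf{(n,1)}(\C))$ is the subgroup of $B_{n+1}$ consisting of braids whose last strand has fixed endpoint (it's the preimage of $B_n\times B_1\subset B_{n+1}$ under the natural surjection, or equivalently the fundamental group of the covering $\SConf{(n,1)}(\C)\to\SConf{(n+1)}(\C)$), so it inherits linearity from $B_{n+1}$ by Bigelow--Krammer. To make the ``subgroup of a product'' step rigorous you should also note that the map $\pi_1(M(\A))\to\pi_1(\SConf{(n,1)}(\C))\times\pi_1(M(\B))$ is injective, which follows from the commutative diagram of split short exact sequences (the map on fibers being the identity on $F_n$) by a short diagram chase or the five lemma.

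Your third paragraph is a genuine misstep: it reintroduces the problem of ``assembling a linear representation of the semidirect product,'' speculates about Burau-type constructions, and proposes citing \cite{CCP} for a ``faithful representation of a braid-monodromy extension.'' None of this is needed, and the stance that ``linearity is not closed under arbitrary extensions, so genuine care is required'' is precisely the point that the pullback argument sidesteps -- you never build a representation of the extension; you embed the extension into a product of two groups whose linearity you already know. Drop the third paragraph entirely, make the injection into the product explicit, and give the elementary reason that $\pi_1(\SConf{(n,1)}(\C))$ embeds in $B_{n+1}$; the result is exactly the proof in the paper.
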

\begin{proof}
We again proceed by induction on the rank of $\A$. The base case is clear as the fundamental group of the complement of a rank one toric arrangement is a finitely generated free group.

For $\A$ supersolvable, as above, we have a corank one supersolvable arrangement $\B$ and fiber bundle $M(\A)\to M(\B)$. By \Cref{thm:toricpullback}\,\eqref{item:coef}, this bundle may be realized as a pullback of the bundle $\Conf^{(n,1)}(\C) \to \Conf_n(\C)/\pg{n}$ over the unordered configuration space, with fiber $\C\smallsetminus\{n\ \text{points}\}$. This yields a commutative diagram of fundamental groups with split short exact rows
\[\begin{tikzcd}
	1 && F_n && \pi_1(M(\A)) && \pi_1(M(\B)) && 1 \\
	1 && F_n && \pi_1(\Conf^{(n,1)}(\C)) && \pi_1(\Conf_n(\C)/\pg{n}) && 1
	\arrow[from=1-3, to=1-5] 
	\arrow[from=1-5, to=1-7]
	\arrow[from=1-1, to=1-3]
	\arrow[from=1-7, to=1-9]
	\arrow[from=2-1, to=2-3]
	\arrow[from=2-3, to=2-5]
	\arrow[from=2-5, to=2-7]
	\arrow[from=2-7, to=2-9]
	\arrow[from=1-3, to=2-3]
	\arrow[from=1-5, to=2-5]
	\arrow[from=1-7, to=2-7]
 \arrow["{=}", from=1-3, to=2-3]
\end{tikzcd}\]
realizing the group $\pi_1(M(\A))$ as a pullback. 

The fundamental group $\pi_1(\Conf_n(\C)/\pg{n})$ is the $n$-strand Artin braid group $B_n$, which as noted above is linear. The group 
$\pi_1(\Conf^{(n,1)}(\C))$ may be realized as the subgroup of the $(n+1)$-strand braid group $B_{n+1}$ 
for which the endpoint of the last strand is fixed, 
so is also linear. Assuming inductively that $\pi_1(M(\B))$ is linear, it follows that 
the pullback $\pi_1(M(\A))$ is also linear, as it is a subgroup of the product $\pi_1(\Conf^{(n,1)}(\C))\times \pi_1(M(\B))$ of linear groups.
\end{proof}

\begin{remark} \label{rem:presentations}
If $\A=\A_r$ is supersolvable of rank $r$, then from \eqref{eq:sschain} we have an increasing chain of M-ideals $\P(\A_{j})$, $1\le j <r$, corresponding supersolvable arrangements $\A_j:= \A_{Y_j}$, so that $\A_j=\A_{j+1}/Y_j$, 
and coefficient maps $\coefmap_j \colon M(\A_{j-1}) \to \Conf_{n_{j}}(\C)/\pg{n_j}$ for $j\geq 2$. 
Setting $n_1=1+|\A_1|$, 
 the fundamental group $G=G(\A)=\pi_1(M(\A))$ is an iterated semidirect product of free groups $G=\rtimes_{j=1}^{r} F_{n_j}$, acting upon one another by braid automorphisms by \Cref{prop:coef/root}\,\eqref{item:coef1}.

For each $j$, $2\le j \le r$, let $\phi_j={\alpha}_{n_j} \circ (\coefmap_j)_\sharp
\colon \pi_1(M(\A_{j-1})) \to \Aut(F_{n_j})$. If $F_{n_j}=\langle \ys_{p,j}^{}\ 1\le p \le n_j\rangle$, the group $G$ has generators 
$\ys_{p,j}$, $1\le p \le n_j$, $1\le j \le r$, and relations 
\begin{equation} \label{eq:pi1rels}
\ys_{p,i}^{-1} \ys_{q,j}^{} \ys_{p,i}^{}=\phi_j(\ys_{p,i}^{})(\ys_{q,j}^{}),\ 1\le p \le n_i, 1\le q \le n_j, 1\le i < j \le r.
\end{equation}

If, further, $\A$ is strictly supersolvable, we have corresponding root maps $\rootmap_j \colon M(\A_{j-1}) \to \Conf_{n_{j}}(\C)$ and the homomorphism
$\phi_j\colon \pi_1(M(\A_{j-1})) \to \Aut(F_{n_j})$ may be expressed as $\phi_j=\hat{\alpha}_{n_j} \circ (\rootmap_j)_\sharp$. In this instance, $G$ is an almost-direct product of free groups, acting upon one another by pure braid automorphisms by \Cref{prop:coef/root}\,\eqref{item:root1}.

Since $\phi_j$ is the composition of the Artin representation and the homomorphism induced by the coefficient or root map, determining the latter yields an explicit presentation of the group $G=\pi_1(M(\A))$. We illustrate this with our running example next. See \S\ref{subsec:r2pi1} and \S\ref{subsec:typeC2} for further illustrations.
\end{remark}

\begin{example}\label{ex:A4}
Recall the toric arrangement $\A$ from \Cref{ex:A}, and the associated fiber bundles from \Cref{ex:A3}.

For the first of these bundles, $\pr\colon M(\A)\to M(\A/H_0)=\C\smallsetminus\{0,-1,1\}$, the action of the fundamental group of the base $F_3=\langle \xs_0,\xs_1,\xs_2\rangle$ on that of the fiber  $F_3=\langle \ys_1,\ys_2,\ys_3\rangle$ is the composition $\phi=\hat\alpha_3 \circ \rootmap_\sharp$ of the root map induced homomorphism $\rootmap_\sharp\colon F_3 \to P_3$ and the Artin representation $\hat\alpha_3\colon P_3 \to  \Aut(F_3)$. Computing with the expression of $\rootmap_\sharp$ from \eqref{bsharpex} in \Cref{ex:A3} and with the Artin representation given in  
\eqref{eq:PureArtin} yields a presentation of $\pi_1(M(\A))$ with generators  $\xs_0,\xs_1,\xs_2,\ys_1,\ys_2,\ys_3$ and relations  
\[u^{-1}v u = \phi(u)(v)=w(u,v) \cdot v\cdot w(u,v)^{-1}\] for $u\in\{\xs_0,\xs_1,\xs_2\}$, $v\in\{\ys_1,\ys_2,\ys_3\}$, and
\[
\begin{matrix}
\begin{aligned}
    w(\xs_0^{},\ys_1^{})&=(\ys_1^{}\ys_2^{})^{2}, \\
    w(\xs_0^{},\ys_2^{})&= \ys_1^{}\ys_2^{}\ys_1^{},\\
    w(\xs_0^{},\ys_3^{})&=1,
\end{aligned}  
&
\begin{aligned}
    w(\xs_1^{},\ys_1^{})&=[\ys_1^{}\ys_2^{}\ys_3^{} \ys_2^{-1} \ys_1^{-1},\,\ys_2], \\  
    w(\xs_1^{},\ys_2^{})&= \ys_1^{}\ys_2^{}\ys_3^{} \ys_2^{-1} \ys_1^{-1},\\
    w(\xs_1^{},\ys_3^{})&=[\ys_2^{-1},\ys_1^{-1}]\cdot \ys_2,
\end{aligned}  
&
\begin{aligned}
    w(\xs_2^{},\ys_1^{})&=1, \\
    w(\xs_2^{},\ys_2^{})&= \ys_2^{}\ys_3^{},\\
    w(\xs_2^{},\ys_3^{})&=\ys_2^{}.
\end{aligned}  
\end{matrix}\]

For the second bundle, $\pr\colon M(\A)\to M(\A/H_3)=\C\smallsetminus\{0,1\}$, the action of the fundamental group of the base $F_2=\langle \us_0,\us_1\rangle$ on that of the fiber  $F_5=\langle \vs_1,\dots,\vs_5\rangle$ is the composition $\psi=\alpha_5 \circ \coefmap_\sharp$ of  the coefficient map induced homomorphism $\coefmap_\sharp\colon F_2 \to B_5$ and the Artin representation $\alpha_5\colon B_5 \to  \Aut(F_5)$. Computing with the expression of $\coefmap_\sharp$ given in \Cref{ex:A3} and \eqref{eq:FullArtin} yields a presentation of $\pi_1(M(\A))$ with generators  $\us_0,\us_1,\vs_1,\vs_2,\vs_3,\vs_4,\vs_5$ and relations  
\[\begin{matrix}
\begin{aligned}
\us_0^{-1}\vs_1^{}\us_0^{}&=\vs_1^{},\\
\us_0^{-1}\vs_2^{}\us_0^{}&= \vs_2^{}\vs_3^{}\vs_4^{}\vs_3^{-1}\vs_2^{-1},\\
\us_0^{-1}\vs_3^{}\us_0^{}&=\vs_2^{}\vs_3^{}\vs_2^{-1},
\end{aligned}  
&
\begin{aligned}
\us_0^{-1}\vs_4^{}\us_0^{}&=\vs_2^{},\\
\us_0^{-1}\vs_5^{}\us_0^{}&=\vs_5,\\
\ &
\end{aligned}  
&
\begin{aligned}
\us_1^{-1}\vs_1^{}\us_1^{}&=\vs_1^{}\vs_2^{}\vs_1\vs_2^{-1}\vs_1^{-1}, \\
\us_1^{-1}\vs_2^{}\us_1^{}&= \vs_1^{}\vs_2^{}\vs_1^{-1},\\
\us_1^{-1}\vs_3^{}\us_1^{}&=\vs_3^{},
\end{aligned}  
&
\begin{aligned}
\us_1^{-1}\vs_4^{}\us_1^{}&=\vs_4^{}\vs_5^{}\vs_4^{}\vs_5^{-1}\vs_4^{-1},\\
\us_1^{-1}\vs_5^{}\us_1^{}&=\vs_4^{}\vs_5^{}\vs_4^{-1}.\\
\ &
\end{aligned}  
\end{matrix}\]

Rewriting these relations using $\vs_2^{}=\us_0^{-1}\vs_4^{}\us_0^{}$ yields a presentation with $6$ generators and $9$ relations. 
Letting $\ws=\ys_1^{}\ys_2^{}\ys_3^{}\ys_2^{-1}\ys_1^{-1}$, one
can check that the correspondence between this presentation and that arising from the first bundle is given by
\[
\vs_1^{} \mapsto \xs_1  [\ws,\,\ys_2], \quad
\vs_3^{} \mapsto \xs_0^{}, \quad \vs_5^{} \mapsto \xs_2^{}, \quad \us_0^{} \mapsto \ys_1, \quad \vs_4 \mapsto \ys_1^{}\ys_2^{}\ys_1^{-1}, \quad 
\us_1 \mapsto \ws.
\]
\end{example}

\section{Strictly supersolvable toric arrangements} \label{sec:ssta}
Let $\A=\A_r$ be a strictly supersolvable toric arrangement of rank $r$, with corresponding TM-ideals $\P(\A_{Y_j})$ and arrangements $\A_j=\A_{j+1}/Y_j$, $1\le j < r$. 
From \Cref{prop:coef/root}\,\eqref{item:root1}, we have associated
root maps $\rootmap_j \colon M(\A_{j-1}) \to \Conf_{n_{j}}(\C)$, where $n_j=1+|\A_{j}|-|\A_{j-1}|$.  
In this section, we focus on 
several invariants which may be obtained from
the sequence
$\bigl((\rootmap_2)_*,\dots,(\rootmap_r)_*\bigr)$ of \textbf{homological root homomorphisms}, where
\begin{equation} \label{eq:homroothom}
 (\rootmap_j)_* \colon H_1(M(\A_{j-1});\Z) \longrightarrow H_1(\Conf_{n_j}(\C);\Z).   
\end{equation}

\subsection{Lower central series Lie algebra}
We first investigate the (integral) lower central series (LCS) Lie algebra of the fundamental group $G(\A)=\pi_1(M(\A))$ of the complement of a strictly supersolvable toric arrangement $\A$. We denote this Lie algebra by $\lie(G(\A))$, or more briefly $\lie(\A)$.  We begin with a brief discussion of the LCS Lie algebra of the pure braid group, sometimes referred to as the universal Yang-Baxter Lie algebra, which will play a prominent role in what follows. 

\begin{example} \label{ex:lcsPn}  
The structure of the LCS Lie algebra of the pure braid group $P_n=\pi_1(\Conf_n(\C))$ was determined by Kohno \cite{Kohno}. The Lie algebra $\lie(P_n)$ is generated by $A_{i,j}=[a_{i,j}]$, $1\le i<j \le n$,  the homology classes of the generator $a_{i,j}$ of $P_n$, and has relations
\[
[A_{i,j},A_{k,l}]=0 \ \text{for}\ i,j,k,l\ \text{distinct, and}\ [A_{q,k},A_{i,j}+A_{i,k}+A_{j,k}]=0\ \text{for}\ q=i,j. 
\]
From this description, it follows that $\lie(P_{n+1})$ is the semidirect product of $\lie(P_n)$ by $\LL[n]$, the free Lie algebra generated by $A_{i,n+1}$, $1\le i\le n$, determined by the Lie homomorphism $\theta_n\colon \lie(P_n) \to \Der(\LL[n])$ given by $\theta_n(A_{i,j})=\ad(A_{i,j})$.
From the relations above, the adjoint action of $\lie(P_n)$ on $\LL[n]$ is given by
\begin{equation} \label{eq:infbraid}
\begin{aligned}
\theta_n(A_{i,j})(A_{q,n+1})&=\ad(A_{i,j})(A_{q,n+1})\\
&=[A_{i,j},A_{q,n+1}]=\begin{cases}
    [A_{q,n+1},A_{i,n+1}+A_{j,n+1}]&\text{if $q=i,j$,}\\
    0&\text{otherwise.}
\end{cases}
\end{aligned}
\end{equation}
We will refer to \eqref{eq:infbraid}, resp., the underlying relations, as the \textbf{infinitesimal pure braid relations}.
\end{example}
\begin{theorem} \label{thm:LCSliealg}
For $\A$ strictly supersolvable, the lower central series Lie algebra $\lie(\A)$ of the fundamental group $G(\A)=\pi_1(M(\A))$ 
is an iterated semidirect product of free Lie algebras, determined by the sequence of homological root homomorphisms 
and the infinitesimal pure braid relations. 
\end{theorem}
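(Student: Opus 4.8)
The plan is to induct on the rank $r$, combining the almost-direct product structure of $G(\A)$ established in \Cref{cor:almostdirect} with the Falk--Randell splitting of the lower central series for almost-direct products, together with the principle that a graded Lie algebra generated in degree one determines, and is determined by, the degree-one component of any homomorphism out of it.

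For the base case $r=1$, $M(\A)$ is homotopy equivalent to $\C$ with finitely many points removed, so $G(\A)=F_{n_1}$ is free and $\lie(\A)=\lie(F_{n_1})$ is a free Lie algebra, with no action to record. For the inductive step, put $\B=\A_{r-1}$. By \Cref{prop:coef/root}\,\eqref{item:root1} and \Cref{cor:section} there is a split short exact sequence
\[
1\longrightarrow F_{n_r}\longrightarrow G(\A)\longrightarrow G(\B)\longrightarrow 1,
\]
in which $G(\B)$ acts on $F_{n_r}$ through $\phi_r=\hat{\alpha}_{n_r}\circ(\rootmap_r)_\sharp\colon G(\B)\to\Aut(F_{n_r})$, with image in the pure braid automorphisms, hence among the IA-automorphisms of $F_{n_r}$. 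Because this action is by IA-automorphisms, the Falk--Randell mechanism applies: the associated lower central series sequence
\[
0\longrightarrow \lie(F_{n_r})\longrightarrow \lie(\A)\longrightarrow \lie(\B)\longrightarrow 0
\]
is exact, and the group section of $G(\A)\to G(\B)$ induces a Lie algebra section, so $\lie(\A)\cong \lie(F_{n_r})\rtimes\lie(\B)$ as graded Lie algebras (see \cite{FR}, and \cite{cohen,DCalmostdirect,CCX} for the almost-direct product case). By the inductive hypothesis, $\lie(\B)$ is an iterated semidirect product of free Lie algebras determined by $((\rootmap_2)_*,\dots,(\rootmap_{r-1})_*)$ and the infinitesimal pure braid relations, so it remains only to identify the new action of $\lie(\B)$ on the free Lie algebra $\lie(F_{n_r})$ on $n_r$ generators.

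That action is the Lie homomorphism $\lie(\B)\to\Der(\lie(F_{n_r}))$ obtained by passing to associated graded Lie algebras in $\phi_r$. By functoriality and $\phi_r=\hat\alpha_{n_r}\circ(\rootmap_r)_\sharp$, it factors as $\theta_{n_r}\circ\lie((\rootmap_r)_\sharp)$, where $\lie((\rootmap_r)_\sharp)\colon\lie(\B)\to\lie(P_{n_r})$ is induced by the root map and $\theta_{n_r}\colon\lie(P_{n_r})\to\Der(\lie(F_{n_r}))$ is the associated graded of the monodromy $\hat\alpha_{n_r}$; by \Cref{ex:lcsPn} it is given by $\theta_{n_r}(A_{i,j})=\ad(A_{i,j})$ and records the infinitesimal pure braid relations \eqref{eq:infbraid}. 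Finally, $\lie((\rootmap_r)_\sharp)$ is determined by its degree-one component: every lower central series Lie algebra is generated by its degree-one part, so a graded Lie homomorphism out of $\lie(\B)$ is determined by its restriction to degree one; and in degree one, $\lie((\rootmap_r)_\sharp)$ is exactly the homological root homomorphism $(\rootmap_r)_*\colon H_1(M(\B);\Z)\to H_1(\Conf_{n_r}(\C);\Z)=\lie(P_{n_r})_1$ of \eqref{eq:homroothom}, since the degree-one part of $\lie(-)$ applied to a group homomorphism is the induced map on abelianizations. Thus the semidirect factor $\lie(F_{n_r})$ is attached by data entirely determined by $(\rootmap_r)_*$ and $\theta_{n_r}$, and the induction closes: $\lie(\A)$ is the iterated semidirect product of the free Lie algebras $\lie(F_{n_j})$, the action at each stage governed by the homological root homomorphism $(\rootmap_j)_*$ and the infinitesimal pure braid relations.

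The step I expect to be the main obstacle is the exactness and splitting of the associated lower central series sequence, i.e., verifying that the Falk--Randell argument genuinely applies: this is precisely where strict, as opposed to merely ordinary, supersolvability is used, since strictness is what forces the monodromy $\phi_r$ to factor through the pure braid group --- hence through IA-automorphisms --- and IA-ness of the action is exactly the hypothesis making $\lie$ of a split group extension split as a Lie algebra. A secondary technical point is checking that $\lie((\rootmap_r)_\sharp)$ lands in $\lie(P_{n_r})$ (rather than in some larger Lie algebra associated to $\Aut(F_{n_r})$), and that its degree-one part coincides with $(\rootmap_r)_*$ under the identification $H_1(\Conf_{n_r}(\C);\Z)=\lie(P_{n_r})_1$; both follow from naturality of the associated graded construction together with the fact that $(\rootmap_r)_\sharp$ is induced by an actual continuous map $M(\B)\to\Conf_{n_r}(\C)$.
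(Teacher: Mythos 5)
Your proof is correct and takes essentially the same approach as the paper's: induction on rank, the Falk--Randell splitting for almost-direct products, the semidirect decomposition $\lie(P_{n+1}) = \LL[n] \rtimes_{\theta_n} \lie(P_n)$, and identification of the action as $\theta_{n_r}$ precomposed with the map on Lie algebras induced by the root map. The one point you make more explicit than the paper is the reduction from the full graded Lie homomorphism $\lie((\rootmap_r)_\sharp)\colon\lie(\B)\to\lie(P_{n_r})$ to its degree-one component $(\rootmap_r)_*$ via the fact that $\lie(\B)$ is generated in degree one; the paper's commutative diagram labels that Lie algebra map simply $\rootmap_*$, leaving this identification implicit.
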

\begin{proof}
For $\A$ strictly supersolvable of rank $r$, the fundamental group $G(\A)=\rtimes_{i=1}^{r} F_{n_i}$ is an almost-direct product of free groups. The LCS Lie algebra of the free group $F_n$ is the free Lie algebra $\LL[n]$. From the almost-direct product structure of $G(\A)$, we have an isomorphism of abelian groups $\lie(\A) \cong \LL[n_1]\oplus\dots\oplus \LL[n_r]$ as in \cite[Theorem 3.1]{FR} (see also \cite[Theorem 4.4]{CCX}). 

We must show that the sequence of homological root homomorphisms, together with \eqref{eq:infbraid}, determine the 
iterated semidirect product structure of the Lie algebra $\lie(\A)$. 
This is accomplished by induction on the rank $r$ of $\A$. 
In the base case $r=1$, there is nothing to prove as the fundamental group $G(\A)$ is a finitely generated free group, the lower central series Lie algebra $\lie(\A)$ is a free Lie algebra, and the sequence of homological root homomorphisms is vacuous.

For the general case, write $\B=\A_{r-1}$, $n=n_r$, and denote the root map $\rootmap_r$ by simply $\rootmap$. By induction, the LCS Lie algebra $\lie(\B)$ of $G(\B)$ is an iterated semidirect product of free Lie algebras determined by \eqref{eq:infbraid} and the (truncated) sequence of homological root homomorphisms $\bigl((\rootmap_2)_*,\dots,(\rootmap_{r-1})_*)$. Since the bundle 
$\pr:M(\A)\to M(\B)$ is the pullback of the bundle $\pi\colon\Conf_{n+1}(\C) \to \Conf_n(\C)$ along the root map $\rootmap=\rootmap_r \colon M(\B) \to \Conf_n(\C)$, we have commutative diagrams  of fundamental groups and associated LCS Lie algebras:

\begin{center}
\begin{tikzpicture}[scale=.75]
\node (t1) at (0,2){$1$}; 
\node (t2) at (2,2) {$F_n$};
\node (t3) at (4,2) {$G(\A)$};
\node (t4) at (6,2) {$G(\B)$};
\node (t5) at (8,2){$1$}; 
\node (b1) at (0,0){$1$}; 
\node (b2) at (2,0) {$F_n$};
\node (b3) at (4,0) {$P_{n+1}$};
\node (b4) at (6,0) {$P_{n}$};
\node (b5) at (8,0){$1$}; 
\node (tt1) at (10,2){$0$}; 
\node (tt2) at (12,2) {$\LL[n]$};
\node (tt3) at (14,2) {$\lie(\A)$};
\node (tt4) at (16,2) {$\lie(\B)$};
\node (tt5) at (18,2){$0$}; 
\node (bb1) at (10,0){$0$}; 
\node (bb2) at (12,0) {$\LL[n]$};
\node (bb3) at (14,0) {$\lie(P_{n+1})$};
\node (bb4) at (16,0) {$\lie(P_{n})$};
\node (bb5) at (18,0){$0$}; 
\draw[->] (t4) -- node[right] {$\rootmap_\sharp$} (b4);
\draw[->] (t3) -- (b3);
\draw[->] (t2) -- node[right] {$=$} (b2);
\draw[->] (t1) -- (t2); \draw[->] (t2) -- (t3); \draw[->] (t3) -- (t4); \draw[->] (t4) -- (t5);
\draw[->] (b1) -- (b2); \draw[->] (b2) -- (b3); \draw[->] (b3) -- (b4); \draw[->] (b4) -- (b5);
\draw[->] (tt4) -- node[right] {$\rootmap_*$} (bb4);
\draw[->] (tt3) -- (bb3);
\draw[->] (tt2) -- node[right] {$=$} (bb2);
\draw[->] (tt1) -- (tt2); \draw[->] (tt2) -- (tt3); \draw[->] (tt3) -- (tt4); \draw[->] (tt4) -- (tt5);
\draw[->] (bb1) -- (bb2); \draw[->] (bb2) -- (bb3); \draw[->] (bb3) -- (bb4); \draw[->] (bb4) -- (bb5);
\end{tikzpicture}
\end{center}
\noindent{Since} the Fadell-Neuwirth bundle admits a section, the bundle $M(\A)\to M(\B)$ does as well, and the rows of both diagrams are split exact.

Using the fact that $\lie(P_{n+1})$ is the semidirect product of $\lie(P_n)$ by $\LL[n]$ determined by the Lie homomorphism $\theta_n\colon \lie(P_n) \to \Der(\LL[n])$ given in \eqref{eq:infbraid}, the right-hand  pullback diagram of Lie algebras implies that $\lie(\A)$ is the semidirect product of $\lie(\B)$ by $\LL[n]$ determined by the composite
$\theta_n \circ \rootmap_* \colon \lie(\B) \to \Der(\LL[n])$. This completes the proof.
\end{proof}

\begin{remark} \label{rem:liepres}
The preceding result may be used to obtain a presentation for the LCS Lie algebra $\lie(\A)$. Denote the homology classes of the generators $y_{p,j}$ of $G(\A)$ by $e_{p,j}$. From the proof of \Cref{thm:LCSliealg}, these classes satisfy $[e_{p,i},e_{q,j}] = \ad\left((\rootmap_j)_*(e_{p,i})\right)(e_{q,j})$ in $\lie(\A)$, and all relations in $\lie(\A)$ are consequences of these.
Thus, $\lie(\A)$ is the quotient of the free Lie algebra generated by $\{e_{p,j} \mid 1\le j \le r,\ 1\le p\le n_j\}$ by the Lie ideal generated by
\[
 [e_{p,i},e_{q,j}] - \ad\left((\rootmap_j)_*(e_{p,i})\right)(e_{q,j}),\ 1\le i<j \le r,\ 1\le p\le n_i,\ 1\le q\le n_j.
\]
\end{remark}

\subsection{Cohomology ring}
We now turn our attention to the cohomology ring of the complement of a strictly supersolvable toric arrangement.

\begin{theorem} \label{thm:H*ring}
For $\A$ strictly supersolvable, the structure of the integral cohomology ring $H^*(M(\A);\Z)$ of the complement is determined by the sequence of homological root homomorphisms \eqref{eq:homroothom} 
and the infinitesimal pure braid relations. \end{theorem}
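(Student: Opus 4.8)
The plan is to induct on the rank $r$ of $\A$, closely following the proof of \Cref{thm:LCSliealg} and the approach of \cite{DCalmostdirect} in the hyperplane case. The base case $r=1$ is immediate: $M(\A)$ is homotopy equivalent to a wedge of circles, so $H^*(M(\A);\Z)$ is free abelian, concentrated in degrees $0$ and $1$, with all products of positive-degree classes zero, and the sequence of homological root homomorphisms is empty. For the inductive step, set $\B=\A_{r-1}$, $n=n_r$, and let $\rootmap=\rootmap_r\colon M(\B)\to\Conf_n(\C)$ be the associated root map. By \Cref{thm:toricpullback}\,\eqref{item:root} the bundle $\pr\colon M(\A)\to M(\B)$ is the pullback along $\rootmap$ of the Fadell--Neuwirth bundle $\pi\colon\Conf_{n+1}(\C)\to\Conf_n(\C)$; let $h\colon M(\A)\to\Conf_{n+1}(\C)$ denote the pullback map, so that $\pi\circ h=\rootmap\circ\pr$.

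First I would apply the Leray--Hirsch theorem. By \Cref{prop:coef/root}\,\eqref{item:root1} the monodromy of $\pr$ factors through the pure braid group, hence acts trivially on $H_*(F;\Z)$, where $F\simeq\C\smallsetminus\{n\text{ points}\}$ is the fiber. The Arnold classes $\omega_{i,n+1}\in H^1(\Conf_{n+1}(\C);\Z)$, $1\le i\le n$, restrict to a basis of $H^1(F;\Z)$, so the pulled-back classes $\xi_i:=h^*\omega_{i,n+1}\in H^1(M(\A);\Z)$ do as well. Leray--Hirsch then identifies $H^*(M(\A);\Z)$ with the free $H^*(M(\B);\Z)$-module (via $\pr^*$, which is injective since $\pr$ admits a section by \Cref{cor:section}) on the basis $1,\xi_1,\dots,\xi_n$; in particular $H^*(M(\A);\Z)$ is free abelian, so there is no torsion to contend with.

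It remains to express every product in terms of this module basis. Products $\pr^*\eta\cdot\pr^*\eta'$ with $\eta,\eta'\in H^*(M(\B);\Z)$ are determined by induction, which gives that $H^*(M(\B);\Z)$ is governed by the truncated sequence $\bigl((\rootmap_2)_*,\dots,(\rootmap_{r-1})_*\bigr)$ together with the infinitesimal pure braid relations. Mixed products $\xi_i\cdot\pr^*\eta$ already lie in the $\pr^*H^*(M(\B);\Z)$-span of $1,\xi_1,\dots,\xi_n$ by the module structure and so introduce nothing new. The essential point is the product $\xi_i\xi_j$: pulling back the Arnold relation $\omega_{i,n+1}\,\omega_{j,n+1}=\omega_{i,j}(\omega_{j,n+1}-\omega_{i,n+1})$ in $H^*(\Conf_{n+1}(\C);\Z)$ --- the exterior-algebra counterpart of the infinitesimal pure braid relations \eqref{eq:infbraid} --- and using $h^*\pi^*=\pr^*\rootmap^*$, one obtains
\[
\xi_i\xi_j=\pr^*\!\bigl(\rootmap^*\omega_{i,j}\bigr)\cdot(\xi_j-\xi_i),\qquad 1\le i<j\le n,
\]
where $\omega_{i,j}$ with $i,j\le n$ is identified with its class in $H^1(\Conf_n(\C);\Z)$. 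Since the relevant homology groups are free abelian, $H^1(M(\B);\Z)=\Hom(H_1(M(\B);\Z),\Z)$, and the class $\rootmap^*\omega_{i,j}$ is determined by the homological root homomorphism $(\rootmap_r)_*$ via $\langle\rootmap^*\omega_{i,j},c\rangle=\langle\omega_{i,j},(\rootmap_r)_*c\rangle$. Collecting the three cases, every relation in $H^*(M(\A);\Z)$ is a consequence of the relations in $H^*(M(\B);\Z)$, the pulled-back Arnold relations, and the Leray--Hirsch module structure, all of which are governed by the full sequence of homological root homomorphisms and the infinitesimal pure braid relations; this completes the induction.

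I expect the main obstacle to be verifying that no further relations arise --- that the twisted tensor product above is the full ring rather than a proper quotient of it. This is exactly where one uses that $\pr$ is pulled back from $\Conf_{n+1}(\C)\to\Conf_n(\C)$, so that the relations among the $\xi_i$ are inherited with none added, and where the freeness in Leray--Hirsch is crucial. A secondary point requiring care is the precise translation between the cohomological (Arnold) relations and the Lie-theoretic infinitesimal pure braid relations of \Cref{ex:lcsPn}; this can be made uniform via the Orlik--Solomon/holonomy dictionary for fiber-type complements used in \cite{cohen,DCalmostdirect}.
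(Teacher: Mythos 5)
Your proof is correct but takes a genuinely different route from the paper's. The paper works in group cohomology: since $M(\A)$ is a $K(G,1)$ for the almost-direct product $G=\rtimes_{j=1}^{r}F_{n_j}$, it invokes \cite[Theorem~3.1]{DCalmostdirect} to obtain the presentation $H^*(M(\A))\cong\EE/\II$, where $\EE=\bigwedge H^1(\Z^N)$ and $\II$ is generated by $\ker\bigl(\ab^*\colon H^2(\Z^N)\to H^2(G)\bigr)$, and then shows that the dual map $\ab_*\colon H_2(G)\to H_2(\Z^N)$ is read off from the rewritten group relations \eqref{eq:reprel}, whose abelianizations are governed by the homological root homomorphisms and the infinitesimal pure braid relations via \Cref{thm:LCSliealg}. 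Your proof is instead a direct topological induction using Leray--Hirsch on the bundle $\pr\colon M(\A)\to M(\B)$: the pulled-back Arnold classes $\xi_i=h^*\omega_{i,n+1}$ restrict to a basis of $H^1(F)$, giving a free $H^*(M(\B))$-module decomposition, and the only products not forced by the module structure are the $\xi_i\xi_j$, which you evaluate by pulling back the Arnold relation and observing that $\rootmap^*\omega_{i,j}$ is determined by $(\rootmap_r)_*$ via duality (legitimate as all groups involved are free abelian). Both arguments are sound. Yours is more self-contained --- it bypasses the structure theorem of \cite{DCalmostdirect} and does not depend on \Cref{thm:LCSliealg} as a lemma --- and mirrors the geometric flavor of Cohen's fiber-type computations more visibly. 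The paper's approach has the advantage of directly producing the exterior-algebra presentation $\EE/\II$ on which \Cref{cor:koszul} and the explicit presentations of \Cref{subsec:rank2circ} and \Cref{sec:typeC} rely; recovering that presentation from your twisted-tensor description requires the additional organizing step you allude to in your final paragraph, and spelling that out would be needed to make your argument feed the downstream results as the paper's does.
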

\begin{proof}
From \S\S\ref{subsec:tbundles}--\ref{subsec:group}, if $\A$ is strictly supersolvable of rank $r$, the fundamental group of the complement $G(\A)=\pi_1(M(\A))=\rtimes_{i=1}^{r} F_{n_i}$ is an almost-direct product of free groups (\Cref{cor:almostdirect}), and the complement $M(\A)$ is a $K(G(\A),1)$-space. Consequently, results of \cite{DCalmostdirect} may be used to determine the structure of $H^*(M(\A);\Z)=H^*(G(\A);\Z)$. We phrase the proof in terms of homology and cohomology of groups.
For the remainder of the paper, we suppress coefficients when considering (co)homology with integer coefficients, e.g., $H^*(G(\A))=H^*(G(\A);\Z)$.

Since $G(\A)$ is an almost-direct product of free groups, the integral homology groups are free abelian, with ranks given by
\[
\sum_{i=1}^r \mathrm{rank}\,H_i(G(\A)) \cdot t^i = \prod_{j=1}^r (1+n_jt),
\]
see \cite[Corollary 3.2]{CSchain}. In particular, $H_1(G(\A))$ has rank $N=n_1+\dots+n_r$ and 
$H_2(G(\A))$ has rank $\sum_{i<j} n_i n_j$. Note that $N=r+|\A|$, and observe that $\mathrm{rank}\,H_2(G(\A))$ is equal to the number of
relations recorded in \eqref{eq:pi1rels}. As shown in \cite[\S2]{DCalmostdirect}, the abelianization map $\ab\colon G(\A) \to \Z^N$ induces isomorphisms $a_*\colon H_1(G(\A))\to H_1(\Z^N)$, 
$a^*\colon H^1(\Z^N) \to H^1(G(\A))$, a monomorphism  $\ab_*\colon H_2(G(\A)) \to H_2(\Z^N)$, and an epimorphism $\ab^*\colon H^2(\Z^N) \to H^2(G(\A))$. 

Denote the integral exterior algebra $H^*(\Z^N)$ by $\EE$, and let $\II$ be the ideal in $\EE$ generated by \[\ker\bigl(\ab^*\colon H^2(\Z^N)\to H^2(G(\A))\bigr).\] Similarly, let $\EE_\QQ=H^*(\Z^N;\QQ)$ and let $\II_\QQ$ be the ideal in $\EE_\QQ$ corresponding to $\II\subset \EE$. 
By \cite[Theorem 3.1]{DCalmostdirect}, we have $H^*(G(\A);\QQ) \cong {\EE_\QQ}/{\II_\QQ}$. Since $H^*(G(\A))$ is torsion-free (and is finitely generated), it follows that $H^*(G(\A)) \cong {\EE}/{\II}$.
As the map $\ab^*\colon H^2(\Z^N)\to H^2(G(\A))$ is dual to $\ab_*\colon H_2(G(\A)) \to H_2(\Z^N)$, to prove the theorem, it suffices to show that the latter is determined by the sequence of homological root homomorphisms 
and the infinitesimal pure braid relations. 

Recall that $G(\A)=\rtimes_{j=1}^{r} F_{n_j}$ has generators $\ys_{p,j}$, $1\le p \le n_j$, $1\le j \le r$, and relations given by \eqref{eq:pi1rels}. 
Write a representative such relation as $\xs^{-1} \ys \xs=\phi(\xs)(\ys)$, where $\xs=\ys_{p,i}$, $\ys=\ys_{q,j}$, $i<j$, and $\phi$ is the composition of the (faithful) Artin representation and the homomorphism induced by the root map $\rootmap_j$. The pure braid $\zs=(\rootmap_j)_\sharp(\xs)$ acts by conjugation, so we have $\phi(\xs)(\ys)=\zs^{-1}\ys \zs = \ws \ys \ws^{-1}$, where $\ws,\ys \in F_{n_j}$. 
Since
$\ys\xs=\xs\ys[\ys^{-1},\ws]=\xs\ys\cdot \ys^{-1} [\ws,\ys]{\ys} = \xs\ys\cdot \bigl[\ys^{-1},[\ws,\ys]\bigr] \cdot [\ws,\ys]$, 
this representative relation can be rewritten in the form 
\begin{equation} \label{eq:reprel}
\ys\xs
= \xs\ys\cdot \bigl[\ys^{-1},[\ws,\ys]\bigr] \cdot [\ws,\ys],
\end{equation}
as in \cite[Proposition 2.2]{DCalmostdirect}.

The homology classes $e_{p,j}$ of the generators $\ys_{p,j}$ of $G(\A)$ form a basis for $H_1(G(\A))=H_1(\Z^N)$. Identifying $H_2(\Z^N)=\Z^{\binom{N}{2}}$ with the second graded piece of the 
exterior algebra $\EE$, this group has basis $e_{p,i}e_{q,j}$ where $1\le i\le j \le r$, $1\le p \le n_i$, $1\le q \le n_j$, and $p<q$ if $i=j$. 
Since, as noted above, the (free abelian) group $H_2(G(\A))$ has rank $\sum_{i<j} n_in_j$, this group has generators in correspondence with (the reformulations \eqref{eq:reprel} of) the relations \eqref{eq:pi1rels}. If $\mathbf{r}$ denotes the (generator corresponding to the) representative relation \eqref{eq:reprel} above, with $\xs=\ys_{p,i}$, $\ys=\ys_{q,j}$, it follows from \cite[\S2]{DCalmostdirect} that $\ab_*(\mathbf{r}) = e_{p,i}e_{q,j} + {\mathsf{W}} e_{q,j}$, where ${\mathsf{W}}=\ab(\ws)$ is the {image of $\ws$ under the abelianization map.}

The relation \eqref{eq:reprel} also gives rise to a relation in the LCS Lie algebra $\lie(\A)$ of $G(\A)$, as in \cite[Lemma 2.3.4]{cohen}. Rewriting the relation $\mathbf{r}$ as
\[
1_{G(\A)} = [\xs\ws,\ys]=\left[\xs,[\ws,\ys]\right] \cdot [\ws,\ys] \cdot [\xs,\ys], 
\]
we have $0=[{\mathsf{W}},e_{q,j}] + [e_{p,i},e_{q,j}] = [e_{p,i}+{\mathsf{W}},e_{q,j}]$ in $\lie(\A)$. Since these (defining) relations in $\lie(\A)$ are determined by  
the sequence of homological root homomorphisms and the infinitesimal pure braid relations by \Cref{thm:LCSliealg}, so is the map $\ab_*\colon H_2(G(\A)) \to H_2(\Z^N)$, as required.
\end{proof}

\begin{corollary} \label{cor:koszul}
    For $\A$ strictly supersolvable, the integral cohomology ring $H^*(M(\A))\cong \EE/\II$ is a Koszul algebra. 
\end{corollary}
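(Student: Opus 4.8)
The plan is to invoke the criterion that an algebra presented as the quotient of an exterior (or more generally, a Koszul) algebra by a quadratic ideal is Koszul whenever that ideal admits a quadratic Gröbner basis with respect to some admissible monomial order. By \Cref{thm:H*ring} and the discussion following it, we already know $H^*(M(\A);\QQ) \cong \EE_\QQ/\II_\QQ$, where $\EE_\QQ$ is an exterior algebra on $N = n_1 + \cdots + n_r$ generators and $\II_\QQ$ is generated in degree $2$. So the first step is simply to recall this presentation and fix the grading.

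Next I would cite the relevant result from \cite{DCalmostdirect}: there, for the cohomology ring of the complement of a fiber-type (almost-direct product) space, it is shown that the defining ideal $\II_\QQ$ has a quadratic Gröbner basis. The key point is that the almost-direct product structure of $G(\A) = \rtimes_{i=1}^r F_{n_i}$ — which we have established in \Cref{cor:almostdirect} for strictly supersolvable toric arrangements — is exactly the hypothesis under which \cite[\S3]{DCalmostdirect} produces such a basis. Concretely, one orders the generators compatibly with the tower (e.g.\ generators from later fibers dominate those from earlier ones), and the rewriting rules coming from the semidirect product relations \eqref{eq:reprel}/\eqref{eq:pi1rels} become a confluent quadratic system; the leading terms of the degree-two generators of $\II_\QQ$ generate an initial ideal that is again quadratic. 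Since $\EE_\QQ$ is Koszul and the initial ideal is quadratic and monomial, a standard deformation/filtration argument (the initial ideal has the same Hilbert series, and a PBW/Gröbner degeneration preserves Koszulity) shows $\EE_\QQ/\II_\QQ$ is Koszul.

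Putting these together, the corollary follows immediately: $H^*(M(\A);\QQ)$ is a quadratic quotient of the exterior algebra by an ideal with quadratic Gröbner basis, hence Koszul. I would keep the proof short — essentially one or two sentences — since all the substantive work (both the presentation and the Gröbner basis statement) is already available: the presentation from \Cref{thm:H*ring}, and the Gröbner basis from \cite{DCalmostdirect}.

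The main obstacle is really a matter of citation precision rather than new mathematics: one must make sure that the hypotheses of the Gröbner basis result in \cite{DCalmostdirect} are met verbatim by our situation (the almost-direct product structure, finite generation of each free factor, the $K(\pi,1)$ property), all of which we have verified earlier in the paper. A secondary subtlety is that Koszulity is a condition on the \emph{rational} (or field-coefficient) cohomology algebra, so one should be careful to phrase everything over $\QQ$ and not claim Koszulity integrally; the statement of the corollary already does this correctly. No genuinely hard step remains once \Cref{thm:H*ring} and \cite[\S3]{DCalmostdirect} are in hand.
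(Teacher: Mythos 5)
Your proposal is correct and takes essentially the same approach as the paper: cite the result from \cite[\S3]{DCalmostdirect} that the defining ideal $\II_\QQ$ has a quadratic Gr\"obner basis (with the requisite almost-direct product hypotheses verified via \Cref{cor:almostdirect} and \Cref{thm:H*ring}), and conclude Koszulity by the standard criterion that a quotient of an exterior algebra by an ideal with a quadratic Gr\"obner basis is Koszul. The paper states this in a single sentence before the corollary; your version just spells out the PBW/Gr\"obner degeneration reasoning that the paper leaves implicit.
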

\begin{proof}[Sketch of proof] It suffices to show that the ideal $\II$ in the exterior algebra $\EE$ has a quadratic Gr\"obner basis. This is established over the rationals in 
\cite[\S 3]{DCalmostdirect}. We sketch how the discussion there may be used to establish the integral case.

An explicit basis $\mathcal{J}$ for $\ker\bigl( \ab^*\colon H^2(\Z^N) \to H^2(G(\A))\bigr)$ is given in \cite[Corollary 2.5]{DCalmostdirect}, and it is subsequently shown that $\mathcal{J}$ is a Gr\"obner basis for the ideal $\II_\QQ=\left\langle \ker\bigl(\ab^*\colon H^2(\Z^N;\QQ)\to H^2(G(\A);\QQ)\bigr)\right\rangle$ in the degree-lexicographic monomial order. The proof of 
\cite[Lemma 3.3]{DCalmostdirect} establishing the latter includes the construction of a basis 
for the entire ideal $\II_\QQ$. This basis is integral by inspection, so is a basis for the ideal $\II$. 
Using this basis and the same monomial order, one can check that the initial ideal of $\II$ is generated by the initial terms of the elements of $\mathcal{J}$. Hence, $\mathcal{J}$ is a Gr\"obner basis for $\II$.
\end{proof}

\begin{remark}
Turning briefly to rational homotopy theory, Dupont established formality of toric arrangement complements  \cite[Theorem 1.3]{dupont}. This has implications via the work of Papadima and Yuzvinsky \cite{PY}. When $\A$ is strictly supersolvable, 
Koszulity of the rational cohomology ring in \Cref{cor:koszul} implies that the Bousfield--Kan rational completion of $M(\A)$ is $K(\pi,1)$. Moreover, the Koszul dual of $H^*(M(\A);\QQ)$ is the universal enveloping algebra of the rational LCS Lie algebra of $\pi=\pi_1$, and this relationship yields an alternate proof of the LCS formula in \cite[Theorem D]{BD}.
\end{remark}

\subsection{Topological complexity} Let $X$ be a path-connected topological space with the homotopy type of a finite cell complex, and let $X^I$ denote the space of all continuous paths $\gamma\colon I=[0,1] \to X$. 
\begin{definition}
The \textbf{topological complexity} of $X$, denoted $\tc(X)$, is the sectional category of the fibration $\pi\colon X^I \to X \times X$, $\gamma \mapsto (\gamma(0),\gamma(1))$, sending a path to its endpoints. That is, $\tc(X)=\secat(\pi\colon X^I \to X\times X)$ is the smallest positive integer $k$ for which $X\times X=U_1\cup U_2 \cup \dots \cup U_k$ where each $U_i$ is open and there is a continuous section $s_i\colon U_i \to X^I$ of the path space fibration, $\pi\circ s_i=\id_{U_i}$, for each $i$, $1\le i \le k$.     
\end{definition}

The homotopy-type invariant $\tc(X)$, introduced by Farber \cite{farber}, is motivated by the motion planning problem from robotics. This notion may be extended to a discrete group $G$ by defining $\tc(G)$ to be the topological complexity of an Eilenberg-Mac\,Lane space of type $K(G,1)$.

\begin{theorem} \label{thm:TC}
If $\A$ is a strictly supersolvable toric arrangement of rank $r$ in $(\C^\times)^d$, then the topological complexity of the complement is $\tc(M(\A))=d+r+1$.
\end{theorem}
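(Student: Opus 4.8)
The plan is to sandwich $\tc(M(\A))$ between a dimension-type upper bound and a zero-divisor cup-length lower bound, after first reducing to the essential case. By \Cref{rem:essential} we may write $M(\A)\cong M(\A')\times(\C^\times)^{d-r}$ for an essential strictly supersolvable toric arrangement $\A'$ of rank $r$ in $(\C^\times)^r$, to which all of \Cref{sec:ssta} applies; write $n_1,\dots,n_r$ for the ranks of the free groups in the almost-direct product $\pi_1(M(\A'))=\rtimes_{i=1}^r F_{n_i}$ (\Cref{cor:almostdirect}), so the $j$-th stage of the associated tower of bundles over $M(\A'_{j-1})$ has fiber $\C$ with $n_j$ points removed.

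For the upper bound, $M(\A')$ is aspherical and arises as an iterated fiber bundle over a point with $r$ stages, each fiber homotopy equivalent to the one-dimensional complex $\bigvee^{n_j}S^1$; hence $M(\A')$ has the homotopy type of a CW complex of dimension at most $r$, so $\tc(M(\A'))\le 2r+1$ by the general estimate $\tc(X)\le 2\dim X+1$. Since $\tc((\C^\times)^{d-r})=\tc((S^1)^{d-r})=d-r+1$ by \cite{farber}, the product inequality $\tc(X\times Y)\le\tc(X)+\tc(Y)-1$ gives $\tc(M(\A))\le(2r+1)+(d-r+1)-1=d+r+1$.

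For the lower bound I will use rational zero-divisor cup-length $\mathrm{zcl}_\QQ$, which satisfies $\tc(X)\ge\mathrm{zcl}_\QQ(X)+1$, is superadditive on products, and has $\mathrm{zcl}_\QQ((S^1)^{d-r})=d-r$; it therefore suffices to show $\mathrm{zcl}_\QQ(M(\A'))\ge 2r$. The first observation is that $n_j\ge 2$ for every $j$: if $n_j=1$ then no hypersurface of $\A'$ involves the coordinate eliminated at the $j$-th stage (equivalently $\A'\smallsetminus\A'_{Y_{j-1}}=\emptyset$), so $M(\A')$ splits off a $\C^\times$-factor up to homotopy, contradicting essentiality of $\A'$. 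I then argue $\mathrm{zcl}_\QQ(M(\A'))\ge 2r$ by induction on $r$. For $r=1$, $M(\A')\simeq\bigvee^{n_1}S^1$ with $n_1\ge 2$; choosing distinct basis classes $a,b\in H^1$, the zero-divisors $\bar a=a\otimes 1-1\otimes a$ and $\bar b=b\otimes 1-1\otimes b$ satisfy $\bar a\bar b=b\otimes a-a\otimes b\neq 0$, so $\mathrm{zcl}_\QQ\ge 2$. For the inductive step, the bundle $\pr\colon M(\A')\to M(\A'_{r-1})$ of \Cref{prop:coef/root}\,\eqref{item:root1} has fiber $\bigvee^{n_r}S^1$, $n_r\ge 2$, admits a section (\Cref{cor:section}), and has monodromy by pure braid---hence IA---automorphisms; so by \cite{FR} (cf.\ the proof of \Cref{thm:H*ring}) Leray--Hirsch applies and $H^*(M(\A');\QQ)=\bigoplus_{i=0}^{n_r}p^*H^*(M(\A'_{r-1});\QQ)\cdot\xi_i$ freely, with $\xi_0=1$ and $\xi_1,\dots,\xi_{n_r}\in H^1$ restricting to a basis of $H^1$ of the fiber. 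Pulling back $2(r-1)$ witnessing zero-divisors $\zeta_1,\dots,\zeta_{2(r-1)}$ from $M(\A'_{r-1})$ (injectively, since $\pr$ has a section, so their product stays nonzero) and adjoining $\eta_1=\xi_1\otimes 1-1\otimes\xi_1$ and $\eta_2=\xi_2\otimes 1-1\otimes\xi_2$, I pass to the bigraded decomposition $H^*(M(\A')\times M(\A');\QQ)=\bigoplus_{i,j}(p^*H^*\cdot\xi_i)\otimes(p^*H^*\cdot\xi_j)$: the summand indexed $(1,2)$ receives a contribution only from the $-\xi_1\otimes\xi_2$ term in the expansion of $\eta_1\eta_2$, and that contribution is the image of $\zeta_1\cdots\zeta_{2(r-1)}\neq 0$ under the injective map $\alpha\otimes\beta\mapsto(p^*\alpha\cdot\xi_1)\otimes(p^*\beta\cdot\xi_2)$. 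Hence $\tilde\zeta_1\cdots\tilde\zeta_{2(r-1)}\,\eta_1\eta_2\neq 0$, giving $\mathrm{zcl}_\QQ(M(\A'))\ge 2r$. Combining the two bounds yields $\tc(M(\A))=d+r+1$.

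The step I expect to be the crux is this inductive claim that climbing one stage of the tower with a $\bigvee^{n_j}S^1$-fiber ($n_j\ge 2$) raises $\mathrm{zcl}_\QQ$ by exactly two: it rests on the Leray--Hirsch splitting of cohomology over the base (available precisely because the monodromy acts trivially on the homology of the fiber) and on the bigraded bookkeeping needed to rule out cancellation among the four terms of $\eta_1\eta_2$. Everything else---the reduction of \Cref{rem:essential}, the standard properties of $\tc$ and $\mathrm{zcl}_\QQ$, the estimate $n_j\ge 2$, and the identification of $M(\A)$ as a $K(\rtimes F_{n_i},1)$---is routine given the results already established.
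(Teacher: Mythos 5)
Your proof is correct, and it takes a genuinely different route from the paper's. The paper's proof of \Cref{thm:TC} consists essentially of one line: after the reduction $M(\A)\cong M(\A')\times(\C^\times)^{d-r}$ with $\A'$ essential and the observation that $n_j\ge 2$, it simply cites \cite[Theorem 4.2]{DCalmostdirect}, which computes $\tc(G\times\Z^m)=2r+m+1$ for any almost-direct product $G$ of $r$ free groups of rank $\ge 2$. You instead re-derive that computation from scratch: a dimension upper bound ($M(\A')$ has the homotopy type of an $r$-dimensional complex because it is a $K(\rtimes F_{n_j},1)$ built as an iterated bundle with one-dimensional fibers, whence $\tc\le 2r+1$, combined with the product inequality), and a lower bound via rational zero-divisor cup-length, proved inductively up the tower via Leray--Hirsch (available because the monodromy is IA and the bundles admit sections) together with the bigraded bookkeeping isolating the $(1,2)$-summand to prevent cancellation. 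The trade-off is clear: the paper's route is shorter but opaque, outsourcing the content to a reference, while yours makes visible exactly which structural features (the $n_j\ge 2$ condition, the sections, the trivial monodromy on $H_1$ of the fiber, the free-module splitting of cohomology) drive the answer. One small imprecision: the reason $n_j\ge 2$ is not quite that ``no hypersurface involves the coordinate eliminated at the $j$-th stage''; rather, $n_j=1+|\A'_j\smallsetminus\A'_{j-1}|$, and since $\P(\A'_{j-1})\subsetneq\P(\A'_j)$ have ranks $j-1<j$ (guaranteed by essentiality, which forces the chain \eqref{eq:sschain} to have $r$ proper steps), the arrangements $\A'_{j-1}\subsetneq\A'_j$ differ by at least one hypersurface. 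This does not affect the argument. The Leray--Hirsch hypothesis that the restriction $H^1(M(\A'))\to H^1(F)$ is surjective also deserves a word: a section forces $E_\infty^{p,0}=E_2^{p,0}$ in the Serre spectral sequence, so $d_2\colon E_2^{0,1}\to E_2^{2,0}$ vanishes; combined with trivial $\pi_1$-action on $H^1(F)$ this gives the free $p^*H^*(M(\A'_{r-1}))$-module structure you use.
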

\begin{proof}
As noted in \Cref{rem:essential}, there is an essential toric arrangement $\A'$ in $(\C^\times)^r$ so that $M(\A) \cong M(\A')\times (\C^{\times})^{d-r}$. 
Since $\A$ is strictly supersolvable, so is $\A'$. By \Cref{cor:almostdirect}, the group $G(\A')=\pi_1(M(\A')) \cong \rtimes_{j=1}^r F_{n_j}$ is an almost-direct product of free groups. The fact that $\A'$ is essential implies that the ranks of these free groups satisfy $n_j \ge 2$ for each $j$, $1\le j\le r$. By \cite[Theorem 4.2]{DCalmostdirect}, we have $\tc(G(\A')\times \Z^m)=2r+m+1$ for any non-negative integer $m$. Since $G(\A)=\pi_1(M(\A)) \cong G(\A')\times \Z^{d-r}$ and $M(\A)$ is a $K(G(\A),1)$-space so that $\tc(M(\A))=\tc(G(\A))$, taking $m=d-r$ completes the proof.
\end{proof}

\section{Rank two circuits} \label{subsec:rank2circ}
We illustrate results from \Cref{sec:toric} and \Cref{sec:ssta} using a class of strictly supersolvable rank two toric arrangements, namely, rank two circuits. 

For integers $k,m_1,m_2$ with $k>0$ and $m_2-m_1=m>0$, let $n=km=k(m_2-m_1)$ and consider the toric arrangements $\CC$ and $\CC_{n,m}$ in $(\C^\times)^2$ with character matrices
\[
\begin{pmatrix}n&m_1&m_2\\0&1&1\end{pmatrix} \quad \text{and} \quad \begin{pmatrix}n&-m&0\\0&1&1\end{pmatrix}.
\]
The maps $M(\CC) \to M(\CC_{n,m})$, $(x,y) \mapsto (x,x^{m_2}y)$ and $M(\CC_{n,m}) \to M(\CC)$, $(x,y) \mapsto (x,x^{-m_2}y)$ are homeomorphisms, so we work exclusively with $\CC_{n,m}$.
The arrangement $\CC_{n,m}$ in $(\C^\times)^2 \subset \C^2$, given by the vanishing of the polynomial $x(x^n-1)y(y-x^m)(y-1)$, is strictly supersolvable over the arrangement $\B$ in $\C^\times\subset \C$, given by the vanishing of $x(x^n-1)$. 

\subsection{Fundamental group} \label{subsec:r2pi1}
By \Cref{prop:coef/root}\,\eqref{item:root1}, the bundle $M(\CC_{n,m}) \to M(\B)$ is equivalent to the pullback of the Fadell-Neuwirth bundle $\Conf_4(\C) \to \Conf_3(\C)$ along the map $\rootmap\colon M(\B) \to \Conf_3(\C)$ given by $\rootmap(x)=(0,x^m,1)$. We determine the map on fundamental groups induced by $\rootmap$. With $\mu=\exp(2\pi \ii/n)$ where $\ii=\sqrt{-1}$, the fundamental group of $M(\B)=\C\smallsetminus\{0,1,\mu,\dots,\mu^{n-1}\}$ is free on $n+1$ generators. Fix $\epsilon>0$ small, and fix the basepoint $\ast=1-\epsilon$ in $M(\B)$. Let $\ell(t)=1-\epsilon \exp(2 \pi \ii t)$, $0\le t \le 1$ be a loop about $1$ based at $\ast$, and for $1\le j \le n$, let $f_j(t)=(1-\epsilon)\exp(2 \pi \ii t)$, $0\le t \le j/n$, be the circular arc from $*$ to $\mu^j(1-\epsilon)$. 
Note that $\mu^j \ell(t)$ is a loop about $\mu^j$ based at $\mu^j(1-\epsilon)$, and that $f_n(t)$ is a loop about $0$ based at $\ast$. Loops based at $\ast$ representing the generators of $\pi_1(M(\B))=F_{n+1}$ are then given by $\gamma_0(t)=f_n(t)$, $\gamma_j(t)=f_j(t) 
\centerdot \mu^j\ell(t) \centerdot 
\bar f_j(t)$ for $1 \le j \le n-1$,
where $\bar f_j(t):=f_j(1-t)$ denotes the reverse path, 
$f(t) \centerdot g(t)$ 
denotes concatenation, and $\gamma_n(t)=\ell(t)$.
The case $n=6$, $m=3$ is illustrated in \Cref{fig:n6m3}.
\begin{figure}[h] 
\begin{tikzpicture} 
\draw[color=blue] (1,0) circle [radius=.3]; 
\draw[color=blue] (-.5,.866) circle [radius=.3]; 
\draw[color=blue] (-.5,-.866) circle [radius=.3]; 
\draw[color=purple] (.5,.866) circle [radius=.3]; 
\draw[color=purple] (.5,.866) circle [radius=.3]; 
\draw[color=purple] (.5,-.866) circle [radius=.3]; 
\draw[color=purple] (-1,0) circle [radius=.3]; 
\node at (0,0) {\small{$0$}};
\node[color=blue] at (1,0) {\small{$1$}};
\node[color=purple] at (.5,.866) {\small{$\mu$}};
\node[color=blue] at (-.5,.866) {\small{$\mu^2$}};
\node[color=purple] at (.5,-.866) {\small{$\mu^5$}};
\node[color=blue] at (-.5,-.866) {\small{$\mu^4$}};
\node[color=purple] at (-1,0) {\small{$\mu^3$}};
\draw[color=black] (0,0) circle [radius=.7]; 
\node[color=black,anchor=center,inner sep=.5,fill=white] at (.7,0) {$*$};
\end{tikzpicture} 
\caption{Loops in $M(\B)$ when $n=6$, $m=3$.} \label{fig:n6m3} 
\end{figure}

The fundamental group of $\Conf_3(\C)$ (with basepoint $\rootmap(\ast)=(0,r,1)$, where $0<r=(1-\epsilon)^m<1$) is the 3-strand pure braid group $P_3=\langle a_{1,2}^{},a_{1,3}^{},a_{2,3}^{}\mid a_{1,2}^{}a_{1,3}^{}a_{2,3}^{}= a_{1,3}^{}a_{2,3}^{}a_{1,2}^{}=a_{2,3}^{}a_{1,2}^{}a_{1,3}^{}\rangle$. The pure braids $a_{1,2}^{}$ and $a_{2,3}^{}$ may be represented by loops $(0,r\exp(2\pi \ii \theta),1)$ and $(0,1-(1-r)\exp(2\pi \ii \theta),1)$, $0\le \theta \le 1$, respectively. (An explicit representative of $a_{1,3}^{}$ will not be needed in the following calculations.) The map $\rootmap_\sharp \colon \pi_1(M(\B)) \to P_3$ induced by $\rootmap$ is given by
\begin{equation} \label{eq:gonpi1a}
\begin{aligned} 
\rootmap_\sharp([\gamma_0])&=[\rootmap\circ f_n]= [(0,r\exp(2\pi \ii m t),1)] = a_{1,2}^m,\\
\rootmap_\sharp([\gamma_n])&= [\rootmap\circ \ell]= [(0,(1-\epsilon\exp(2 \pi \ii t))^m,1)] = [(0,1-r\exp(2\pi \ii t),1)] = a_{2,3}^{},
\end{aligned}
\end{equation}
and, recalling that $n=km$, for $1 \le j < n$,
\begin{equation} \label{eq:gonpi1b}
\rootmap_\sharp([\gamma_j])= \begin{cases}
a_{1,2}^q a_{2,3}^{}a_{1,2}^{-q}&\text{if $j=qk$, so that $\mu^j$ is an $m$-th root of unity,} \\
1 &\text{otherwise}.
\end{cases}
\end{equation}
A few details of these calculations follow. It will be enough to take $\epsilon < \min\{\frac{1}{2},\sin(\pi/2mn)\}$. 

For the second equation in \eqref{eq:gonpi1a} we show that for every $t$ the segment $S(t)$ between $\ell(t)^m$ and $(1-r\exp(2\pi\ii t))$ is contained in $\C\smallsetminus\{0,1\}$, so that the map $H\colon I\times I \to \Conf_3(\C), (s,t) \mapsto (0,h(s,t),1)$ with $h(s,t)= s(\ell(t))^m + (s-1)(1-r\exp(2\pi\ii t))$ is a well-defined path homotopy. A straightforward check shows that if $t=0$ then $S(t)\subseteq ]0,1[$, and if $t=\frac{1}{2}$ then $S(t)\subseteq ]1,\infty[$. If $0<t<\frac{1}{2}$ then $\Im(1-r\exp(2\pi\ii t))<0$ and the condition on $\epsilon$ implies 
$
0>\arg(\ell( t))> -\frac{\pi}{m}
$
so that $\Im (\ell(t)^m)<0$ as well and thus by convexity $S(t)\subseteq \R + \ii \R_{<0}\subseteq \C\smallsetminus \{0,1\}$. The case $\frac{1}{2} < t < 1$ is analogous.

For \eqref{eq:gonpi1b} let $1 \le j \le n$ and consider two cases. First, if $j=kq$ for some integer $q$, let $\theta=mt$ and observe that the path $\rootmap\circ f_j(t)$, $0\le t \le j/n$, is in fact the loop $(0,r \exp(2 \pi \ii \theta),1)$, $0 \le \theta \le q$, which represents $a_{1,2}^q$. For such $j$ note also that $\rootmap\circ (\mu^j \ell(t))=\rootmap\circ \ell(t)$ represents $a_{2,3}^{}$. If on the other hand $k$ does not divide $j$, using the condition on $\epsilon$ one shows that  the loop $(\mu^j \ell(t))^m$ is contained in a ``sector'' $U$ of amplitude $<\pi/n$ around the nontrivial $n$-th root of unity $\mu^{jm}\neq 1$. Since $U$ is contractible and misses $0$ and $1$, any path contained in it  is nullhomotopic in $\C\smallsetminus\{0,1\}$. This shows that $\rootmap\circ \ell(t)$ is nullhomotopic in $\Conf_3(\C)$.

Since the 
bundle $M(\CC_{n,m}) \to M(\B)$ is equivalent to the pullback along $\rootmap$ of the 
bundle $\Conf_4(\C) \to \Conf_3(\C)$, with fiber $\C\smallsetminus\{\text{3 points}\}$, the fundamental group $\pi_1(M(\CC_{n,m})) \cong F_3 \rtimes_{\alpha} F_{n+1}$ is the semidirect product of free groups determined by the homomorphism 
$\phi=\hat \alpha_3 \circ \rootmap_\sharp \colon F_{n+1} \to \Aut(F_3)$, where $\hat \alpha_3\colon P_3 \to \Aut(F_3)$ is (the restriction of) the Artin representation.  
Denoting the generators of $F_{n+1}$ by $\xs_i^{}=[\gamma_i]$, $0\le i \le n$, and those of $F_3$ by $\ys_1^{},\ys_2^{},\ys_3^{}$, the group $\pi_1(M(\CC_{n,m}))$ has presentation
\begin{equation} \label{eq:pi1pres}
\pi_1(M(\CC_{n,m})) = \langle \xs_0^{}, \xs_1^{}, \dots, \xs_n^{}, \ys_1^{},\ys_2^{},\ys_3^{} \mid \xs_i^{-1}\ys_j^{}\xs_i^{} = \phi(\xs_i^{})(\ys_j^{}),\ 0\le i \le n,\ 1 \le j \le 3 \rangle.
\end{equation}

This may be made explicit using the Artin representation, see \S\ref{subsec:Artin}. 
For $q \in \Z$, one checks that $a_{1,2}^q(\ys_i^{})=(\ys_1^{}\ys_2^{})^q\ys_i^{}(\ys_1^{}\ys_2^{})^{-q}=[(\ys_1^{}\ys_2^{})^q,\,\ys_i^{}]\cdot \ys_i^{}$ for $i=1,2$, and $a_{1,2}^q(\ys_3^{})=\ys_3^{}$. Using this, one can show that
\[
(a_{1,2}^qa_{2,3}^{}a_{1,2}^{-q})(\ys_i)=\begin{cases}
[\ws_q^{}(\ys_1^{}\ys_2^{})^{-q},\, \ys_1^{}]\cdot \ys_1^{}&\text{if $i=1$,}\\
[\ws_q^{}(\ys_1^{}\ys_2^{})^{-q}\ys_2^{}(\ys_1^{}\ys_2^{})^{q}\ys_3^{}(\ys_1^{}\ys_2^{})^{-q},\, \ys_2^{}]\cdot \ys_2^{}&\text{if $i=2$,} \\
[(\ys_1^{}\ys_2^{})^{-q}\ys_2^{}(\ys_1^{}\ys_2^{})^{q},\ys_3^{}] \cdot \ys_3^{}&\text{if $i=3$,}\\
\end{cases}
\]
where 
\[
\begin{aligned}
\ws_q&=(a_{2,3}^{}a_{1,2}^{-q})\bigl((\ys_1^{}\ys_2^{})^q\bigr)=a_{1,2}^{-q}\bigl(a_{2,3}^{}\bigl((\ys_1^{}\ys_2^{})^q\bigr)\bigr)\\
&=
\bigl(\ys_1^{}\ys_2^{}\ys_3^{}(\ys_1^{}\ys_2^{})^{-q}\ys_2^{}(\ys_1^{}\ys_2^{})^q\ys_3^{-1}(\ys_1^{}\ys_2^{})^{-q}\ys_2^{-1}(\ys_1^{}\ys_2^{})^q\bigr)^q.
\end{aligned}
\]
Rewriting \eqref{eq:gonpi1a} and \eqref{eq:gonpi1b} as 
\begin{equation} \label{eq:gonpi1}
\phi(\xs_0^{})=a_{1,2}^m,\quad\phi(\xs_n^{})=a_{2,3}^{},\quad
\phi(\xs_j^{})=\begin{cases}
a_{1,2}^qa_{2,3}^{}a_{1,2}^{-q}&\text{when $1\le j=qk \le n-1$,}\\ 1&\text{when $1\le j \neq qk \le n-1$,}\end{cases}
\end{equation}
the calculations above may be used to express $\xs_i^{-1}\ys_j^{}\xs_i^{} = \phi(\xs_i^{})(\ys_j^{})$
in terms of the generators $\xs_i^{}, \ys_j^{}$.

\begin{example} \label{ex:n6m3} Consider the case $m=3$, $n=km=6$. Loops in the base of the strictly supersolvable bundle $M(\CC_{6,3})\to M(\B)$ are depicted in Figure \ref{fig:n6m3}. The discussion above yields a presentation for the group $\pi_1(M(\CC_{6,3}))$ with generators $\xs_0^{},\xs_1^{},\dots,\xs_6^{},\ys_1^{},\ys_2^{},\ys_3^{}$ and relations $\xs_i^{-1}\ys_j^{}\xs_i^{}=w_{i,j}^{} \ys_j^{}w_{i,j}^{-1}$. Note that $w_{i,j}^{} \ys_j^{}w_{i,j}^{-1}=[w_{i,j},\,\ys_j^{}]\cdot \ys_j^{}$.  For $i=1,3,5$, we have $w_{i,j}=1$.
For other values of $i$, writing $u^v=v^{-1}uv$, we have
\[
\begin{array}{lll}
w_{0,1}=(\ys_1^{}\ys_2^{})^3,\quad&w_{0,2}=(\ys_1^{}\ys_2^{})^3,\quad&w_{0,3}=1,\\
w_{2,1}=\ws_1^{}(\ys_1\ys_2)^{-1},&w_{2,2}=\ys_3^{(\ys_1\ys_2)^{-1}},&w_{2,3}=\ys_2^{(\ys_1\ys_2)},\\
w_{4,1}=\ws_2^{}(\ys_1\ys_2)^{-2},\quad&w_{4,2}=\ws_2^{}\ys_2^{(\ys_1\ys_2)^2}\ys_3^{}(\ys_1\ys_2)^{-2},\quad&w_{4,3}=\ys_2^{(\ys_1\ys_2)^2},\\
w_{6,1}=1,&w_{6,2}=\ys_2^{}\ys_3^{},&w_{6,3}=\ys_2^{}\ys_3^{}.
\end{array}
\]
\end{example}

\subsection{Lower central series Lie algebra} \label{subsec:r2lcs}
Passing to (integral) homology, let $X_j=[\xs_j]$, $0 \le j \le n$ and $A_{i,j}=[a_{i,j}]$, $1\le i<j\le 3$ denote the generators of $H_1(M(\B))\cong\Z^{n+1}$ and $H_1(\Conf_3(\C))\cong \Z^{3}$, respectively. From \eqref{eq:gonpi1}, the homological root homomorphism $\rootmap_*\colon H_1(M(\B)) \to H_1(\Conf_3(\C))$ is then given by
\[
\rootmap_*(X_0)=m A_{1,2}, \quad \rootmap_*(X_j)=\begin{cases}A_{2,3}&\text{if $j=qk$, so that $\xi^j$ is an $m$-th root of unity,} \\
0 &\text{otherwise}.
\end{cases}
\]

By Theorem \ref{thm:LCSliealg}, the integral lower central series (LCS) 
Lie algebra $\lie(\CC_{n,m})$ of $\pi_1(M(\CC_{n,m})$ is the semidirect product of the free Lie algebra $\LL[n+1]$ (generated by $X_j$, $0\le j\le n$) by the free Lie algebra $\LL[3]$ (generated by $Y_i=A_{i,4}$, $1\le i \le 3$) determined by the Lie homomorphism $\Theta=\theta_3 \circ \rootmap_* \colon \LL[n+1] \to \Der(\LL[3])$, where $\theta_3(A_{i,j})=\ad(A_{i,j})$. Using the above description of the map $\rootmap_*$, we have ${[}X_0,Y_i]=m \ad(A_{1,2})(Y_i)$, ${[}X_j,Y_i]=\ad(A_{2,3})(Y_i)$ if $j=qk$, and ${[}X_j,Y_i]=0$ if $j\neq qk$, yielding
\[
\begin{array}{lll}
[X_0,Y_1]=m[Y_1,Y_2],\quad & [X_0,Y_2]=m[Y_2,Y_1],\quad & [X_0,Y_3]=0,\\ 
{[}X_j,Y_1]=0,\quad & [X_j,Y_2]=[Y_2,Y_3],\quad & [X_j,Y_3]=[Y_3,Y_2], \quad \text{if $j=kq$.} 
\end{array}
\]
Consequently, the LCS Lie algebra $\lie$ may be realized as the quotient of the free Lie algebra $\LL[n+4]$ (generated by $X_j$, $0\le j\le n$, and $Y_1,Y_2,Y_3$) by the Lie ideal $\mathcal J$ generated by
\[
[X_0,Y_1]-m[Y_1,Y_2],\ [X_0,Y_2]+m[Y_1,Y_2],\ [X_0,Y_3],\ [X_j,Y_1],\ [X_j,Y_2] - c_j[Y_2,Y_3],\ [X_j,Y_3] + c_j[Y_2,Y_3],
\]
where $1\le j \le n$, $c_j=1$ if $j=kq$, and $c_j=0$ if $j \neq kq$.

\begin{remark}
The space $M(\CC_{n,m})$ is a $K(G,1)$-space for the almost-direct product of free groups $G=\pi_1(M(\CC_{n,m}))=F_3 \rtimes_{\phi} F_{n+1}$. 
The relations in the presentation \eqref{eq:pi1pres}, resp., the generators of the Lie ideal $\mathcal J$ above, are in correspondence with a basis 
$\{r_{i,j}, 0 \le i  \le n, 1\le j\le 3\}$ for $H_2(G)=H_2(M(\CC_{n,m})) \cong \Z^{3n+3}$. Furthermore, the injective map $\ab_* \colon H_2(G) \to H_2(\Z^{n+4})$ induced by the abelianization $\ab \colon G \to \Z^{n+4}$ may be determined from the generators of $\mathcal J$ as in the proof of \Cref{thm:H*ring}.

Fixing generators $e_j$, $0\le j \le n$, and $f_1,f_1,f_3$ for $H_1(\Z^{n+4})$, the group $H_2(\Z^{n+4})$ may be identified with the second graded piece of the exterior algebra $\bigwedge H_1(\Z^{n+4})$, generated by $e_ie_j, e_if_k, f_kf_l$ ($i<j$, $k<l$). The map $\mathfrak{a}_* \colon H_2(G) \to H_2(\Z^{n+4})$ is then given by
\begin{equation} \label{eq:H2map}
\begin{array}{lll}
\ab_*(r_{0,1})=e_0f_1-mf_1f_2,\quad &\ab_*(r_{0,2})=e_0f_2+mf_1f_2,\quad &\ab_*(r_{0,3})=e_0f_3,\\
\ab_*(r_{j,1)}=e_jf_1,\quad &\ab_*(r_{j,2})=e_jf_2-c_jf_2f_3,\quad &\ab_*(r_{j,3})=e_jf_3+c_jf_2f_3,
\end{array}
\end{equation}
where, as above, $1\le j \le n$, $c_j=1$ if $j=kq$, and $c_j=0$ if $j \neq kq$.
\end{remark}

\begin{example} \label{ex:n6m3part2} We continue with the case $m=3$, $n=6$. The LCS Lie algebra of $G=\pi_1(M(\CC_{6,3}))$ is the quotient of the free Lie algebra $\LL[10]$, generated by $X_0,X_1,\dots,X_6,Y_1,Y_2,Y_3$, by the Lie ideal ${\mathcal J}$ generated by 
\[
\begin{array}{llll}
[X_0,Y_1]-3[Y_1,Y_2],\quad &[X_0,Y_2]+3[Y_1,Y_2],\quad &[X_0,Y_3],\\
{[}X_j,Y_1],\quad &[X_j,Y_2],\quad &[X_j,Y_3],\quad &\text{for $j=1,3,5$,}\\
{[}X_j,Y_1],\quad &[X_j,Y_2]-[Y_2,Y_3],\quad &[X_j,Y_3]+[Y_2,Y_3],\quad &\text{for $j=2,4,6$.}\\
\end{array}
\]
The map $\ab_2 \colon H_2(G) \to H_2(\Z^{10})$ is given by
\[\begin{array}{llll}
\ab(r_{0,1})=e_0f_1-3f_1f_2,\quad &\ab(r_{0,2})=e_0f_2+3f_1f_2,\quad &\ab(r_{0,3})=e_0f_3,\\
\ab(r_{j,1})=e_jf_1,\quad &\ab(r_{j,2})=e_jf_2,\quad &\ab(r_{j,3})=e_jf_3,\quad &\text{for $j=1,3,5$,}\\
\ab(r_{j,1})=e_jf_1,\quad &\ab(r_{j,2})=e_jf_2-f_2f_3,\quad &\ab(r_{j,3})=e_jf_3+f_2f_3,\quad &\text{for $j=2,4,6$.}\\
\end{array}
\]
\end{example}

\subsection{Cohomology ring}\label{sec:cmn:cohom} 
As  
discussed in the proof of \Cref{thm:H*ring}, the integer cohomology ring of the $K(G,1)$-space $M(\CC_{n,m})$ 
is isomorphic to $\EE/\II$, where $\EE=\bigwedge H^1(\Z^{n+4})$ and \[\II=\langle\ker(\ab^*\colon H^2(\Z^{n+4}) \to H^2(G)\rangle.\] Denote the generators of $H^1(\Z^{n+4})=\Hom(H^1(\Z^{n+4}),\Z)$ by the same symbols as those of $H_1(\Z^{n+4})$, so that $\EE$ is the exterior algebra on $e_0,e_1,\dots,e_n,f_1,f_2,f_3$. 
Calculating with \eqref{eq:H2map} and recalling that $n=km$, we obtain $H^*(M(\CC_{n,m})) \cong \EE/\II$, where 
\[
\II = \ker(\ab_2^*) = \left\langle e_ie_j, 0\le i<j\le n,\ f_1f_2+me_0(f_1-f_2),\ f_1f_3,\ f_2f_3+\sum_{q=1}^m e_{kq}(f_2-f_3)\right\rangle.
\]
\begin{example} \label{ex:n6m3part3} Returning to the case $m=3$, $n=6$, we have $H^*(M(\CC_{6,3})) \cong \EE/\II$, where $\EE$ is the exterior algebra on $e_0,e_1,\dots,e_6,f_1,f_2,f_3$, and $\II$ is the ideal generated by $e_ie_j$, $0\le i<j\le 6$, $f_1f_2+3e_0(f_1-f_2)$, $f_1f_3$, and $f_2f_3+(e_2+e_4+e_6)(f_2-f_3)$.
\end{example}

We compare our presentation of the cohomology ring for toric arrangements with others in the literature.
One presentation, from \cite[Theorem 7.4]{CDDMP} and \cite[Theorem 5.9]{BPP}, is for the integral (hence also rational) cohomology, and the second, from \cite[Theorem 6.13]{CDDMP}, is a different presentation for the rational cohomology. 
Below, we describe each of these presentations in the case of $M(\CC_{n,m})$, after some simplification by removing redundant generators, and provide explicit isomorphisms to our presentation.

First, consider the exterior $\Z$-algebra $\EE_1$ on generators $z_1$, $z_2$, $w_1$, $w_2$, and $w_{0,j}$ for $j=1,\dots,n$, and the ideal 
\[\II_1=\left\langle
\begin{array}{lll}
(-mz_1+z_2)w_1, & z_2w_2, \qquad & z_1w_{0,j} \text{ for } 0\leq j\leq n, \\
w_{0,i}w_{0,j} \text{ for } 1\leq i<j\leq n, \ 
& &
w_1w_2+mz_1w_2+\sum_{q=1}^m w_{0,qk}(w_1-w_2)
\end{array}
\right\rangle\]
As in \cite[Theorem 7.4]{CDDMP}, we view the generators of $\EE_1$ as represented by the differential forms below, with $\II_1$ capturing precisely the relations satisfied by these forms, so that the quotient $\EE_1/\II_1$ is isomorphic to $H^*(M(\CC_{n,m}))$. With $\ii=\sqrt{-1}$, write  
\[\begin{array}{lll}
z_1 = \frac{1}{2\pi\ii}\mathrm{dlog}(x), \quad & z_2 = \frac{1}{2\pi\ii}\mathrm{dlog}(y), \\
w_1 = \frac{1}{2\pi\ii}\mathrm{dlog}(1-x^{-m}y), \quad & w_2 = \frac{1}{2\pi\ii}\mathrm{dlog}(1-y), \quad & w_{0,j} = \frac{1}{2\pi\ii}\mathrm{dlog}(1-\zeta^j x) \text{ for } 1\leq j\leq n.
\end{array}\]
Comparing this presentation to the one we derived above, there is an explicit isomorphism from $\EE_1/\II_1$  to $\EE/\II$ given by
\[\begin{array}{llll}
z_1 \mapsto e_0, \qquad & 
z_2 \mapsto f_1, \qquad \\
w_1 \mapsto -me_0+f_2, \qquad
& w_2 \mapsto f_3,  \qquad
& w_{0,j} \mapsto e_j 
&  \text{ for } 1\leq j \leq n.\\
\end{array}\]

Alternatively, using rational coefficients, consider the exterior $\QQ$-algebra $\EE_2$ on generators $\psi_0$, $\psi_1$, $\psi_2$, $\ww_1$, $\ww_2$, and $\ww_{0,j}$ for $1\leq j\leq n$, and the ideal 
\[\II_2=\left\langle
\begin{array}{lll}
\psi_0-k\psi_1+k\psi_2, & \psi_1\ww_1, & \psi_2\ww_2, \\
\psi_0\ww_{0,j} \text{ for } 1\leq j \leq n, \  &
\ww_{0,i}\ww_{0,j} \text{ for } 1\leq i<j\leq n, \  &
\ww_1\ww_2-\psi_1\psi_2+\sum_{q=1}^m\ww_{0,qk}(\ww_1-\ww_2)
\end{array}
\right\rangle\]
 By \cite[Theorem 6.13]{CDDMP}, the quotient $\EE_2/\II_2$ is isomorphic to $H^*(M(\CC_{n,m});\QQ)$. The generators 
  are represented by 
 differential forms distinct from those above (see \cite[Definition 2.20]{CDDMP}). Comparing to our presentation, there is an explicit isomorphism from $\EE_2/\II_2$ to the rationalization of $\EE_1/\II_1$, hence also that of $\EE/\II$, given by
\[\begin{array}{lll}
\psi_0\mapsto nz_1, \qquad
& \psi_1\mapsto -mz_1+z_2,\qquad
& \psi_2\mapsto z_2,
\\
\ww_1\mapsto 2w_1+mz_1-z_2, \quad 
& \ww_2 \mapsto 2w_2-z_2, \quad 
& \ww_{0,j} \mapsto 2w_{0,j}-z_1 \text{ for } 1\leq j\leq n.
\end{array}\]

\section{Homological root homomorphisms} \label{sec:hrm}
As illustrated in \S\ref{sec:ssta} and \S\ref{subsec:rank2circ}, for a strictly supersolvable toric arrangement $\A$, determining the sequence of homological root homomorphisms \eqref{eq:homroothom} yields explicit presentations of the cohomology ring of the complement and the LCS Lie algebra of its fundamental group. Accordingly, we analyze these homological root homomorphisms in this section.

We continue with the notation of \S\ref{sec:toric}:
$\A$ is an essential toric arrangement in $(\C^\times)^{d+1}$, with $\P(\A_Y)$ a corank-one TM-ideal of $\P(\A)$ and $\A\smallsetminus\A_Y=\{H_1,\dots,H_l\}$. In coordinates $(x_1,\dots,x_d,y)=(\mathbf{x},y)$ on $\C^{d+1}$, the hypersurface $H_j$ is given by 
$y=\mu_j x_1^{m_{j,1}}x_2^{m_{j,2}} \cdots x_d^{m_{j,d}} = \mu_j\mathbf{x}^{\mathbf{m}_j}$, where $\mathbf{m}_j\in\Z^d$ and $\mu_j$ is some root of unity. Letting $\B$ denote the essential arrangement $\A/Y$ in $(\C^\times)^d$, the root map $\rootmap \colon M(\B) \to \Conf_n(\C)$, where $n=l+1$, is given by 
\begin{equation} \label{eq:rm}
\rootmap\colon \mathbf{x} \mapsto (b_1(\mathbf{x}),\dots,b_n(\mathbf{x})) = (0,\mu_1\mathbf{x}^{\mathbf{m}_1},\dots,\mu_l\mathbf{x}^{\mathbf{m}_l}).
\end{equation}

For a generic basepoint $\mathbf{x}_0$ in $M(\B)$, the components of $\rootmap(\mathbf{x}_0)$
have distinct real parts, which we use to order strands of (geometric) pure braids. If need be, by conjugating by a (non-pure) braid, we may ensure that the ordering of the roots in \eqref{eq:rm} corresponds to the ordering of the strands in the pure braid group $P_n$. (We suppress this conjugation from the notation.) In particular, the 
root $b_1(\mathbf{x})=0$ arising from the coordinate plane $y=0$ in $\C^{d+1}$ corresponds to the first/left-most strand in $P_n$. See \S\ref{subsec:typeC2} for an illustration. 

\subsection{Homology generators} \label{subsec:H1gens}
Assume without loss of generality that each hypersurface $H\in \B$ is connected, and is given, as in \eqref{eq:rou}, by the equation 
$x_1^{m_{H,1}}\cdots x_d^{m_{H,d}}=\mu_H$, where $\mu_H$ is a root of unity and $\mathbf{m}_H=(m_{H,1},\dots,m_{H,d})\in\Z^d$. We abbreviate this defining equation by $\mathbf{x}^{\mathbf{m_H}}=\mu_H$, and write $f_H(\mathbf{x})=\mathbf{x}^{\mathbf{m_H}}-\mu_H$.
The first integral (de Rham) cohomology group $H^1(M(\B))$ of the complement of $\B$ is free abelian  of rank $N=d+|\B|$. A basis is given by the set of differential forms 
\begin{equation}\label{eq:H^1basis}
\{\zeta_i \mid 1\le i \le d\}\cup\{\omega_H \mid H\in \B\},
\end{equation}
where $\zeta_i=\frac{1}{2\pi\ii}\mathrm{dlog}\, x_i$ and 
$\omega_H=\frac{1}{2\pi\ii}\mathrm{dlog}\, f_H=\frac{1}{2\pi\ii}\mathrm{dlog}(\mathbf{x}^{\mathbf{m}_H}-\mu_H)$,
see \cite{CDDMP} and the references therein (and recall that $\ii=\sqrt{-1}$).
We exhibit a dual basis for the first integral homology group $H_1(M(\B))$.

The complement $M(\B)$ may be realized as $\C^d\smallsetminus V$, where $V$ is the variety in $\C^d$ determined by the toric arrangement $\B$, together with the coordinate hyperplanes. Let $L=\{\mathbf{x}_0+z\cdot\mathbf{v}\mid z\in\C\}$, where $\mathbf{x}_0\in M(\B)$ is the basepoint noted above and $\mathbf{v}\in\C^d$, be a complex line in $\C^d$ that is transverse to $V$. The intersection $L\cap V$ is a collection of points: the $d$ points 
$\mathbf{q}_i$ where $L$ meets the coordinate hyperplane $\{x_i=0\}$, and, for each $H\in \B$, points $\mathbf{q}_{H,1},\dots \mathbf{q}_{H,r_H}$ where $L$ meets $H$.

For $\epsilon>0$ sufficiently small, each of the disks $D_\epsilon(\mathbf{q}_i)$, resp., $D_\epsilon(\mathbf{q}_{H,j})$, in $L$ of radius $\epsilon$ centered at the aforementioned points meets the variety $V$ at only $\mathbf{q}_i$, resp., $\mathbf{q}_{H,j}$. Let $\xi_i$ and $\xi_{H,j}$ denote the (oriented) boundary circles of the disks 
$D_\epsilon(\mathbf{q}_i)$ and $D_\epsilon(\mathbf{q}_{H,j})$,
\begin{equation}\label{eq:H1genrep}
\xi_{i}(t) = \mathbf{q}_i+\epsilon \exp(2 \pi \ii t)\cdot\mathbf{v},\quad \xi_{H,j}(t) = \mathbf{q}_{H,j}+\epsilon \exp(2 \pi \ii t)\cdot\mathbf{v},\quad 0\le t \le 1.
\end{equation}
Viewing these circles in $M(\B)$ via the inclusion $L\cap M(\B) \hookrightarrow M(\B)$, denote their homology classes
in $H_1(M(\B))$ as $X_i=[\xi_i]$ and $X_{H,j}=[\xi_{H,j}]$. As we will see below, for $H\in \B$, the classes $X_{H,j}$ and $X_{H,k}$ coincide for $1\le  j,k\le r_H$. So write $X_H=X_{H,1}$ for $H\in\B$.

\begin{proposition} \label{prop:H1basis}
The first homology group $H_1(M(\B)) \cong \Z^{d+|\B|}$ has basis 
\begin{equation} \label{eq:H_1basis}
\{X_i \mid 1\le i \le d\} \cup \{X_H \mid H \in \B\},
\end{equation}
dual to the basis \eqref{eq:H^1basis} of $H^1(M(\B))$.
\end{proposition}
\begin{proof}
We use the (perfect) Kronecker pairing $H^1(M(\B))\otimes H_1(M(\B)) \to \Z$, given, in terms of (the cohomology class of) a differential form $\omega$ and a homology class $X=[\xi]$, by integration, $\langle\omega,X\rangle=\int_\xi \omega$.

Restricting the functions $f_H$ giving the hypersurfaces in $\B$ 
to the line $L=\{\mathbf{x}_0+z\cdot\mathbf{v}\}$ yields
functions of one complex variable
$\phi_{H}(z)=f_H(\mathbf{x}_0+z\cdot\mathbf{v})$ and $\psi_{H}(z)=\phi'_{H}(z)/\phi_{H}(z)$. Similarly,  restricting the coordinate functions $x_i$ yields $\phi_i(z)=x_i(\mathbf{x}_0+z\cdot\mathbf{v})$ and $\psi_i(z)=\phi'_i(z)/\phi_i(z)$. Express the points where $L$ meets $\{x_i=0\}$
as $\mathbf{q}_i=\mathbf{x}_0+z_i\cdot \mathbf{v}$, and the points where $L$ meets $H\in\B$ as $\mathbf{q}_{H,j}=\mathbf{x}_0+z_{H,j}\cdot \mathbf{v}$, $1\le j\le r_H$. 
Note that $\phi_i(z_i)=0$, that $\phi_H(z_{H,j})=0$ for each $j$, $1\le j\le r_H$,  and that these are the only zeros of these functions.
Writing $\mathbf{x}_0=(x_{0,1},\dots,x_{0,d})$ and $\mathbf{v}=(v_{1},\dots,v_{d})$, it is immediate that the function $\psi_i(z)=v_i/(x_{0,i}+v_iz)$ has a simple pole at $z_i=-x_{0,i}/v_i$, with residue $1$.
Since $\lim_{z\to z_{H,j}} (z-z_{H,j})\cdot \psi_{H}(z)=1$ by L'Hospital's rule, the function $\psi_{H}(z)$ has a simple pole at $z_{H,j}$, with residue $1$, for each $j$, $1\le j\le r_H$.

With these observations, one can use the Residue Theorem to show that the set \eqref{eq:H_1basis} is a basis for $H_1(M(\B))$, dual to the basis \eqref{eq:H^1basis} for $H^1(M(\B))$. First consider the Kronecker pairing 
$\langle \omega_K,X_{H,j}\rangle$ for $H,K\in\B$. We have
\[
\langle \omega_K,X_{H,j}\rangle=\int_{\xi_{H,j}} \omega_K=\int_{\xi_{H,j}}{\textstyle\frac{1}{2\pi\ii}}\mathrm{dlog} f_K(\mathbf{x})={\textstyle\frac{1}{2\pi\ii}}\int_{\gamma_{H,j}} \psi_K(z),
\]
where $\gamma_{H,j}$ is the circle of radius $\epsilon$ centered at $z_{H,j}$, oriented counterclockwise. If $K\neq H$, then from our choice of $\epsilon$, none of the poles $z_{K,l}$ of $\psi_K(z)$ lie within this circle and 
\[\langle \omega_K,X_{H,j}\rangle=\frac{1}{2\pi\ii}\int_{\gamma_{H,j}} \psi_K(z)=0.\] Similarly, we have  
$\langle \zeta_i,X_{H,j}\rangle=\frac{1}{2\pi\ii}\int_{\gamma_{H,j}} \psi_i(z)=0$, since the pole $z_i$ of $\psi_i(z)$ does not lie within this circle either.

If $K=H$, then
\[
\langle \omega_H,X_{H,j}\rangle={\textstyle\frac{1}{2\pi\ii}}\int_{\gamma_{H,j}} \psi_H(z)=1
\]
by the Residue Theorem, since $z_{H,j}$ is the only pole of $\psi_H(z)$ within the circle $\gamma_{H,j}$.
Thus, for $1\le j,k\le r_H$, the homology classes $X_H=X_{H,j}$ and $X_{H,k}$ are two expressions of the dual of the cohomology class $\omega_H$, hence are equal. In particular, $X_H=X_{H,1}$ is the dual of $\omega_H$.

Similar considerations yield
\[
\langle \zeta_i,X_{j}\rangle={\textstyle\frac{1}{2\pi\ii}}\int_{\gamma_{j}} \psi_i(z)=\delta_{i,j}
\quad\text{and}\quad
\langle \zeta_i,X_H\rangle=\langle \zeta_i,X_{H,1}\rangle={\textstyle\frac{1}{2\pi\ii}}\int_{\gamma_{H,1}} \psi_i(z)=0.
\]
If $i=j$, a quick calculation yields $\langle \zeta_i,X_{j}\rangle=1$. The remaining cases follow from the fact that 
the pole $z_i$ of $\psi_i(z)$ does not lie within either the circle $\gamma_{H,1}$ or, for $i\neq j$, the circle $\gamma_j$ of radius $\epsilon$ centered at $z_j$. Consequently, the homology class $X_i$ is the dual of the cohomology class $\zeta_i$
\end{proof}

\begin{remark}\label{rem:confgen}
We will also make use 
of explicit generators for the first integral homology group of the configuration space $\Conf_n(\C)$. Recall from \Cref{ex:lcsPn} that $H_1(\Conf_n(\C))=H_1(P_n)$ (the first graded piece of the LCS Lie algebra $\lie(P_n)$) has basis $\{A_{i,j}\mid 1\le i<j\le n\}$. The classes $A_{i,j}$ may be represented by loops in $\Conf_n(\C)$ about the diagonal hyperplanes $\Delta_{i,j}=\{x_i=x_j\}$, and are dual to the classical generators of the integral cohomology ring $H^*(\Conf_n(\C))$ which we recall next. For $1\le i<j\le n$, define $\proj_{i,j}\colon \Conf_n(\C) \to \C^\times$ by $\proj_{i,j}(x_1,\dots,x_n)=x_j-x_i$. 
Fixing a generator $\omega=\frac{1}{2\pi\ii} \mathrm{dlog}\,z$ of $H^1(\C^\times)$ yields classes $\omega_{i,j}=\proj_{i,j}^*(\omega)=\frac{1}{2\pi\ii} \mathrm{dlog}(x_j-x_i)$ in $H^1(\Conf_n(\C))$, which generate the ring $H^*(\Conf_n(\C))$. Then $\omega_{i,j}\in H^1(\Conf_n(\C))$ and $A_{i,j}\in H_1(\Conf_n(\C))$ are dual.
\end{remark}

\subsection{The homological root homomorphism} \label{subsec:hrh}
We now determine the map 
\[
\rootmap_*\colon H_1(M(\B))\to H_1(\Conf_n(\C))
\]
in homology 
induced by the root map \eqref{eq:rm}. Recall that this map is given explicitly by $\rootmap(\mathbf{x})=(b_1(\mathbf{x}),b_2(\mathbf{x}),\dots,b_n(\mathbf{x}))$, where $b_1(\mathbf{x})=0$, and $b_{j+1}(\mathbf{x})=\mu_{j}\mathbf{x}^{\mathbf{m}_{j}}=\mu_jx_1^{m_{j,1}}\cdots x_d^{m_{j,d}}$ for $1\le j\le l$, where $l=|\A|-|\B|$ and $n=l+1$.  
Our goal is to find an explicit description of $\rootmap_\ast$ in terms of the generators of $H_1(M(\B))$ and $H_1(\Conf_n(\C))$ given in \Cref{prop:H1basis} and \Cref{rem:confgen}.

\begin{remark} \label{rem:comps}
Let $\pi\colon (\C^\times)^{d+1} \to (\C^\times)^d$, $\pi(\mathbf{x},y)=\mathbf{x}$, be the projection map which forgets the last coordinate. For $H_i,H_j \in \A\smallsetminus \A_Y$ and any connected component $L$ of $H_i\cap H_j$, \Cref{lem:layermap} implies that $\pi(L)$ is a layer of $\B$ of dimension equal to the dimension of $L$. Therefore $\pi(L)$ is a hypersurface of $\B$.

Explicitly, if $H_i$ and $H_j$ are given by $y=\mu_i\mathbf{x}^{\mathbf{m}_i}$ and $y=\mu_j\mathbf{x}^{\mathbf{m}_j}$, let $\mathbf{a}=(\mathbf{m}_j-\mathbf{m}_i)/m(i,j)$, where
$m(i,j)=\gcd(m_{j,1}-m_{i,1},\dots,m_{j,d}-m_{i,d})$.
The intersection $H_i\cap H_j$ is then given by the equation $(\mathbf{x}^{\mathbf{a}})^{m(i,j)}-\mu_i\mu_j^{-1}=0$, 
with connected components given by $\mathbf{x}^{\mathbf{a}}-\lambda\eta^q=0$, where $\lambda=|\mu_i\mu_j^{-1}|^{1/m(i,j)}$,  $\eta=\exp(2\pi\ii/m(i,j))$, and $1\le q \le m(i,j)$. Thus, $\mathbf{x}^{\mathbf{a}}-\lambda\eta^q=0$ is an equation
of a hypersurface $H$ of $\B$. Since $H$ is given by the vanishing of $f_H(\mathbf{x})=\mathbf{x}^{\mathbf{m}_H}-\mu_H$, we have either $f_H(\mathbf{x})=\mathbf{x}^{\mathbf{a}}-\lambda\eta^q$ (when $\mathbf{m}_H=\mathbf{a}$), or $f_H(\mathbf{x})=\mathbf{x}^{\mathbf{-a}}-\lambda^{-1}\eta^{-q}$ (when $\mathbf{m}_H=-\mathbf{a}$). If $s(i,j)=\bigl|\{H\in\B \mid \mathbf{m}_H=\mathbf{a}\}\bigr|$, then 
$m(i,j)-s(i,j)=\bigl|\{H\in\B \mid \mathbf{m}_H=-\mathbf{a}\}\bigr|$, and
\[
{\boldsymbol{\nu}}(i,j):=
\frac{1}{m(i,j)}\bigl[s(i,j)\mathbf{m}_i+(m(i,j)-s(i,j))\mathbf{m}_j\bigr]=\mathbf{m}_i+(m(i,j)-s(i,j))\mathbf{a} =\mathbf{m}_j-s(i,j)\mathbf{a}\in \Z^d
\]
is an integer vector. Write ${\boldsymbol{\nu}}(i,j)=\bigl(\nu_1(i,j),\dots,\nu_d(i,j)\bigr)$.
\end{remark}

\begin{theorem} \label{thm:hrm}
Let $\A$ be an essential toric arrangement in $(\C^\times)^{d+1}$, with $\P(\A_Y)$ a corank-one TM-ideal of $\P(\A)$, $\A\smallsetminus\A_Y=\{H_1,\dots,H_l\}$, and $\B=\A/Y$. 
On the basis $\{X_k \mid 1\le k \le d\} \cup \{X_H \mid H \in \B\}$ of $H_1(M(\B))$,  
the homological root homomorphism $\rootmap_*\colon H_1(M(\B))\to H_1(\Conf_n(\C))$,
where $n=l+1$, is given by
\[
\rootmap_*(X_{k})=\sum_{j=1}^{n-1}\Bigl[m_{j,k}A_{1,j+1} + \sum_{i=1}^{j-1} \nu_k(i,j) A_{i+1,j+1}\Bigr]  \quad  \text{and} \quad   
\rootmap_*(X_{H})=\sum
A_{i+1,j+1},  
\]
the latter sum running over all $i<j$ for which $H$ is a connected component of $\pi(H_i \cap H_j)$.
\end{theorem}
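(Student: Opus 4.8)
The plan is to compute $\rootmap_*$ one coefficient at a time, by dualizing. By \Cref{rem:confgen} the classes $A_{i,j}$ form a basis of $H_1(\Conf_n(\C))$ dual to the cohomology classes $\omega_{i,j}$, so the coefficient of $A_{i,j}$ in $\rootmap_*(X_{p,k})$ is the pairing $\langle \rootmap^*\omega_{i,j},X_{p,k}\rangle$. First I would fix convenient representatives: choose the generator $\omega$ of $H^1(S^1)$ so that $\omega_{i,j}$ is represented by the closed form $\tfrac{1}{2\pi\ii}\,\mathrm{dlog}(x_j-x_i)$ on $\Conf_n(\C)$, and recall from \eqref{eq:rm} that $\proj_{i,j}\circ\rootmap$ is the map $\mathbf{x}\mapsto b_j(\mathbf{x})-b_i(\mathbf{x})$ with $b_1\equiv 0$ and $b_{j+1}(\mathbf{x})=\mu_j\mathbf{x}^{\mathbf{m}_j}$. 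Pulling back then gives $\rootmap^*\omega_{i,j}=\tfrac{1}{2\pi\ii}\,\mathrm{dlog}(b_j-b_i)$, a well-defined closed $1$-form on $M(\B)$ because $b_j-b_i$ is a Laurent binomial on $(\C^\times)^d$ that is nowhere zero on $M(\B)$ (it vanishes exactly on $\proj(H_{i-1}\cap H_{j-1})\subseteq\B$). Thus each coefficient equals the winding number about $0$ of the loop $t\mapsto b_j(\xi_{p,k}(t))-b_i(\xi_{p,k}(t))$, for the generating loop $\xi_{p,k}$ of \eqref{eq:H1genrep}.

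Next I would record two elementary pairings. For a coordinate form, $\langle \tfrac{1}{2\pi\ii}\,\mathrm{dlog}(x_l),X_{p,k}\rangle=\delta_{l,k}$ when $p=0$ (since $\zeta_{0,k}=0$, so $\xi_{0,k}$ is the $k$-th coordinate loop of $(\C^\times)^d$) and $=0$ when $p\ge 1$ (all coordinate functions stay near nonzero constants on the small meridian $\xi_{p,k}$). For a Laurent binomial $g=\lambda\mathbf{x}^{\mathbf{v}}-1$ that is nonzero on $M(\B)$: restricting to $L_k$ and writing $v_k$ for the $x_k$-exponent of $\mathbf{v}$, one has $g\circ\xi_{0,k}(t)=c\,\epsilon^{v_k}e^{2\pi\ii v_k t}-1$ with $c\neq 0$, whose winding number about $0$ is $\min\{v_k,0\}$ once $\epsilon$ is small (the dominant term is the one of smaller exponent; the case $v_k=0$ gives a nonzero constant); and for $p\ge 1$, writing $\mathbf{v}=e\mathbf{v}'$ with $\mathbf{v}'$ primitive, the zero locus $\{g=0\}$ is the disjoint union of the $e$ connected codimension-one subtorus cosets $\{\mathbf{x}^{\mathbf{v}'}=u_0\}$ ($u_0^e=\lambda^{-1}$), each occurring with multiplicity one in the divisor of $g$, so $\langle \tfrac{1}{2\pi\ii}\,\mathrm{dlog}(g),X_{p,k}\rangle$ is the sum of the linking numbers of $\xi_{p,k}$ with these cosets. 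Since $\xi_{p,k}$ bounds the disk $D_{p,k}$ meeting $\B$ only at $\mathbf{q}_{p,k}\in H_{p,k}$, transversally there (using a generic basepoint $\mathbf{q}$ and, via \Cref{rem:small}, a small $\epsilon$), that sum is $1$ if $H_{p,k}$ is one of the cosets and $0$ otherwise; and by the remark preceding the statement these cosets are precisely the connected components of $\proj(H_{i-1}\cap H_{j-1})$.

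Then I would assemble. For $i=1$, $b_j-b_1=\mu_{j-1}\mathbf{x}^{\mathbf{m}_{j-1}}$, so $\rootmap^*\omega_{1,j}=\tfrac{1}{2\pi\ii}\sum_l m_{j-1,l}\,\mathrm{dlog}(x_l)$ and the first pairing above gives coefficient $m_{j-1,k}$ of $A_{1,j}$ in $\rootmap_*(X_{0,k})$ and $0$ in $\rootmap_*(X_{p,k})$ for $p\ge 1$. For $2\le i<j\le n$, factor $b_j-b_i=\mu_{i-1}\mathbf{x}^{\mathbf{m}_{i-1}}\cdot g_{i,j}$ with $g_{i,j}=(\mu_{j-1}/\mu_{i-1})\,\mathbf{x}^{\mathbf{m}_{j-1}-\mathbf{m}_{i-1}}-1$ (whose zero locus is $\proj(H_{i-1}\cap H_{j-1})$); since $\mathrm{dlog}$ turns products into sums, $\rootmap^*\omega_{i,j}=\tfrac{1}{2\pi\ii}\bigl(\sum_l m_{i-1,l}\,\mathrm{dlog}(x_l)+\mathrm{dlog}(g_{i,j})\bigr)$. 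Evaluating on $X_{0,k}$ yields $m_{i-1,k}+\min\{m_{j-1,k}-m_{i-1,k},0\}=\min\{m_{i-1,k},m_{j-1,k}\}$, and on $X_{p,k}$ for $p\ge 1$ it yields $1$ exactly when $H_{p,k}$ is a connected component of $\proj(H_{i-1}\cap H_{j-1})$. Relabeling the strands by the hypersurfaces they correspond to ($i\mapsto i+1$, $1\le i\le l$, the first strand being the axis $b_1=0$ as fixed in the preamble of this section, and $n-1=l$) converts these coefficients into the two displayed formulas.

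The step I expect to be the main obstacle is the $p\ge 1$, $i\ge 2$ case: verifying that the winding number of $g_{i,j}$ about $0$ along the meridian $\xi_{p,k}$ is exactly the multiplicity (zero or one) with which $H_{p,k}$ occurs in the divisor of $g_{i,j}$. This rests on two facts that each need some care: that the divisor of the single Laurent binomial $g_{i,j}$ is reduced, which follows from writing $g_{i,j}$ as a product of factors $\mathbf{x}^{\mathbf{v}'}-u_0$ with $\mathbf{v}'$ primitive (each cutting out a smooth connected hypersurface, and pairwise disjointly); and that $\xi_{p,k}$ is genuinely a meridian linking $H_{p,k}$ once and every other hypersurface of $\B$ zero times, which is where one must invoke genericity of the basepoint $\mathbf{q}$ (so that $\mathbf{q}_{p,k}$ lies on no $H_{p',k'}$ with $(p',k')\neq(p,k)$) together with \Cref{rem:small}. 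The remaining ingredients --- the dominant-term estimate producing the $\min$ when $p=0$, transversality of $L_k$ to $H_{p,k}$, and the compatibility of the $\mathrm{dlog}$ representatives with the cohomology pairing --- are routine.
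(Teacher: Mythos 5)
Your proposal is correct and follows the same fundamental strategy as the paper: identify the coefficient of $A_{i,j}$ in $\rootmap_*(X_{p,k})$ with the winding number about $0$ of the loop $\proj_{i,j}\circ\rootmap\circ\xi_{p,k}$, and exploit \Cref{rem:small} to shrink $\epsilon$. What differs is the bookkeeping. The paper expands each such loop as $\varrho(\epsilon e^{2\pi\ii t})$ for an explicit polynomial $\varrho$ and reasons about its constant and linear coefficients; in the $p\ge 1$, $i\ge 2$ case it must separately argue, via a linear-dependence computation on defining characters, that the linear coefficient is nonzero precisely when $H_{p,k}$ is a component of $\proj(H_i\cap H_j)$. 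You instead factor $b_j-b_i$ into a monomial times a Laurent binomial $g_{i,j}$, use additivity of $\mathrm{dlog}$ to split the winding number into two reusable pairings (coordinate form vs.\ binomial form), and for the binomial part invoke the argument principle together with reducedness of the divisor of a single Laurent binomial and transversality of $L_k$; the condition $m_{j,k}\neq m_{i,k}$ then falls out automatically from transversality rather than being established separately. Your version is more modular and also surfaces the genericity requirement on $\mathbf{q}$ more explicitly, which is in fact needed: the paper's intermediate claim that $H\cap\proj(H_i\cap H_j)=\emptyset$ whenever $H$ is not a component of $\proj(H_i\cap H_j)$ is stronger than what holds in general (distinct hypersurfaces of $\B$ can meet in codimension two), so the genericity assumption you flag is implicitly doing real work in the paper's argument as well. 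Both approaches are sound; yours trades the explicit coefficient analysis for a cleaner structural one.
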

\begin{proof}
We again use the Kronecker pairing. For $X\in H_1(M(\B))$, the coefficient of $A_{r,s}$ in $\rootmap_*(X)$ is equal to $\langle\omega_{r,s},\rootmap_*(X)\rangle=\langle\rootmap^*(\omega_{r,s}),X\rangle$.

We determine the map $\rootmap^*\colon H^1(\Conf_n(\C))\to H^1(M(\B))$ in cohomology.  
If $r=1$ and $s=j+1$, then
\begin{equation} \label{eq:w1j+1}
\rootmap^*(\omega_{1,j+1})={\textstyle\frac{1}{2\pi\ii}}\mathrm{dlog}(b_{j+1}(\mathbf{x})-b_1(\mathbf{x}))=
{\textstyle\frac{1}{2\pi\ii}}\mathrm{dlog}(\mu_j\mathbf{x}^{\mathbf{m}_j})=
\sum_{k=1}^d m_{j,k}\zeta_k.
\end{equation}
If $r=i+1$ and $s=j+1$ for $1\le i<j\le l$, then $b_{j+1}(\mathbf{x})-b_{i+1}(\mathbf{x})=
\mu_j\mathbf{x}^{\mathbf{m}_j}-\mu_i\mathbf{x}^{\mathbf{m}_i}$. 
In the notation established in \Cref{rem:comps}, we have
\[
\mu_j\mathbf{x}^{\mathbf{m}_j}-\mu_i\mathbf{x}^{\mathbf{m}_i} 
=\mu_j \mathbf{x}^{\mathbf{m}_i} (\mathbf{x}^{\mathbf{m}_j-\mathbf{m}_i} - \mu_i\mu_j^{-1})=
\mu_j \mathbf{x}^{\mathbf{m}_i} ((\mathbf{x}^{\mathbf{a}})^{m(i,j)} - \mu_i\mu_j^{-1})=
\mu_j \mathbf{x}^{\mathbf{m}_i} \prod_{q=1}^{m(i,j)} (\mathbf{x}^{\mathbf{a}}-\lambda\eta^q),
\]
and the vanishing of each factor $\mathbf{x}^{\mathbf{a}}-\lambda\eta^q$ gives a hypersuface $H$ of $\B$, also defined by the vanishing of $f_H(\mathbf{x})=\mathbf{x}^{\mathbf{m}_H}-\mu_H$. In the $s(i,j)$ instances where $\mathbf{m}_H=\mathbf{a}$, we have $\mathrm{dlog}(\mathbf{x}^{\mathbf{a}}-\lambda\eta^q)=\mathrm{dlog}\,f_H$, while in the $m(i,j)-s(i,j)$ instances where $\mathbf{m}_H=-\mathbf{a}$, we have 
$\mathrm{dlog}(\mathbf{x}^{\mathbf{a}}-\lambda\eta^q)=\mathrm{dlog}\,f_H+\mathrm{dlog}(\mathbf{x}^{\mathbf{a}})$. These observations yield
\[
\begin{aligned}
\rootmap^*(\omega_{i+1,j+1})&={\textstyle\frac{1}{2\pi\ii}}\mathrm{dlog}(b_{j+1}(\mathbf{x})-b_{i+1}(\mathbf{x}))={\textstyle\frac{1}{2\pi\ii}}\mathrm{dlog}(\mu_j\mathbf{x}^{\mathbf{m}_j}-\mu_i\mathbf{x}^{\mathbf{m}_i})\\
&={\textstyle\frac{1}{2\pi\ii}}\mathrm{dlog}(\mathbf{x}^{\mathbf{m}_i})
+\sum_{q=1}^{m(i,j)}{\textstyle\frac{1}{2\pi\ii}}\mathrm{dlog}(\mathbf{x}^{\mathbf{a}}-\lambda\eta^q)\\
&={\textstyle\frac{1}{2\pi\ii}}\mathrm{dlog}(\mathbf{x}^{\mathbf{m}_i})
+\sum_{\mathbf{m}_H=\mathbf{a}}{\textstyle\frac{1}{2\pi\ii}}\mathrm{dlog}\,f_H
+\sum_{\mathbf{m}_H=-\mathbf{a}}{\textstyle\frac{1}{2\pi\ii}}(\mathrm{dlog}\,f_H+\mathrm{dlog}(\mathbf{x}^{\mathbf{a}}))\\
&={\textstyle\frac{1}{2\pi\ii}}\mathrm{dlog}(\mathbf{x}^{\mathbf{m}_i})
+{\textstyle\frac{1}{2\pi\ii}}(m(i,j)-s(i,j)){\textstyle\frac{1}{2\pi\ii}}\mathrm{dlog}(\mathbf{x}^{\mathbf{a}})+ 
\sum_{\genfrac{}{}{0pt}{}{H\in\B}{H\subset \pi(H_i\cap H_j)}}{\textstyle\frac{1}{2\pi\ii}}\mathrm{dlog}\,f_H.
\end{aligned}
\]
Since ${\boldsymbol{\nu}(i,j)}=\mathbf{m}_i+(m(i,j)-s(i,j))\mathbf{a}$, we have
$\mathrm{dlog} ( \mathbf{x}^{\boldsymbol{\nu}(i,j)})= \mathrm{dlog}(\mathbf{x}^{\mathbf{m}_i})+
(m(i,j)-s(i,j))\mathrm{dlog}(\mathbf{x}^{\mathbf{a}})$. Consequently, 
\begin{equation} \label{eq:wi+1j+1}
\rootmap^*(\omega_{i+1,j+1})=
{\textstyle\frac{1}{2\pi\ii}}\mathrm{dlog}(\mathbf{x}^{\boldsymbol{\nu}(i,j)})+
\sum_{\genfrac{}{}{0pt}{}{H\in\B}{H\subset \pi(H_i\cap H_j)}}{\textstyle\frac{1}{2\pi\ii}}\mathrm{dlog}\,f_H
=\sum_{k=1}^d \nu_k(i,j)\zeta_k + 
\sum_{\genfrac{}{}{0pt}{}{H\in\B}{H\subset \pi(H_i\cap H_j)}} \omega_H 
\end{equation}

It is now a straightforward exercise using \Cref{prop:H1basis},  
\eqref{eq:w1j+1}, and \eqref{eq:wi+1j+1} to determine 
the homological root homomorphism $\rootmap_*\colon H_1(M(\B))\to H_1(\Conf_n(\C))$. 
For $X\in H_1(M(\B))$, write $\rootmap_*(X)=\sum_{1\le r<s\le n} c_{r,s}A_{r,s}$,  where $c_{r,s}=\langle\omega_{r,s},\rootmap_*(X)\rangle=\langle\rootmap^*(\omega_{r,s}),X\rangle$.

First, consider the generator $X=X_H$ corresponding to $H\in\B$. If $r=1$ and $s=j+1$, then 
$c_{1,j+1}=\sum_{k=1}^d m_{j,k} \langle \zeta_k,X_H\rangle=0$ using \eqref{eq:w1j+1}.
If $r=i+1$ and $s=j+1$, using \eqref{eq:wi+1j+1}, we have
\[
c_{i+1,j+1}=
\sum_{k=1}^d m_{i,k}\langle\zeta_k,X_H\rangle + 
\sum_{\genfrac{}{}{0pt}{}{K\in\B}{K\subset \pi(H_i\cap H_j)}} \langle\omega_K,X_H
\rangle=
\sum_{\genfrac{}{}{0pt}{}{K\in\B}{K\subset \pi(H_i\cap H_j)}} \langle\omega_K,X_H\rangle
=\delta_{K,H}.
\]
Thus, $\rootmap_*(X_H)=\sum A_{i+1,j+1}$, where the sum is over $i<j$ for which $H$ is a connected component of $\pi(H_i \cap H_j)$ as asserted.

Next, consider the generator $X=X_k$ corresponding to the coordinate hyperplane $\{x_k=0\}$.
If $r=1$ and $s=j+1$, then 
$c_{1,j+1}=\sum_{q=1}^d m_{j,q} \langle \zeta_q,X_k\rangle=m_{j,k}$ using \eqref{eq:w1j+1}.
If $r=i+1$ and $s=j+1$, using \eqref{eq:wi+1j+1}, we have
\[
c_{i+1,j+1}=
\sum_{q=1}^d \nu_q(i,j)\langle\zeta_q,X_k\rangle + 
\sum_{\genfrac{}{}{0pt}{}{H\in\B}{H\subset \pi(H_i\cap H_j)}} \langle\omega_H ,X_k
\rangle=\nu_k(i,j).
\]
This completes the proof.
\end{proof}

\begin{example} 
Let $\CC$ be the rank two circuit in $(\C^\times)^2$ considered in \Cref{subsec:rank2circ}, defined by $x-\mu^j=0$, $1\le j \le n$, $x^{m_1}y=1$, and $x^{m_2}y=1$, where $m=m_2-m_1>0$, $n=km$ and $\mu=\exp(2\pi\ii /n)$. With $H_1$ given by $y=x^{-m_1}$ and $H_2$ by $y=x^{-m_2}$, the point $x=\mu^j$ is a component of $\pi(H_1\cap H_2)$ when $j=qk$, so that $\mu^j$ is an $m$-th root of unity. 
As in \S\ref{subsec:r2lcs}, denote the generators of the first homology of $M(\B)=\C\smallsetminus\{0,1,\mu,\dots,\mu^{n-1}\}$ by $X_j$, $0\le j \le n$, where $X_0$ is the class of a loop about $0$, and $X_j$ is the class of a loop about $\mu^j$ for $j \ge 1$. 
By \Cref{thm:hrm}, the homological root homomorphism associated to the root map $\rootmap(x)=(0,x^{-m_1},x^{-m_2})$ is given by
\[
\rootmap_*(X_0)=-m_1 A_{1,2}-m_2A_{1,3}-m_2A_{2,3}, \qquad \rootmap_*(X_j)=\begin{cases}A_{2,3}&\text{if $j=qk$,} \\
0 &\text{otherwise}.
\end{cases}
\]
Noting that 
\[
\rootmap_*(X_0)=(m_2-m_1) A_{1,2}-m_2(A_{1,2}+A_{1,3}+A_{2,3})=m A_{1,2}-m_2(A_{1,2}+A_{1,3}+A_{2,3}),
\]
it is readily checked that the resulting LCS Lie algebra $\lie(\CC)$ and cohomology ring $H^*(M(\CC))$ are isomorphic to $\lie(\CC_{n,m})$ and $H^*(M(\CC_{n,m}))$, obtained from 
the homological root homomorphism for the ``standard form'' rank two circuit $\CC_{n,m}$ recorded in \S\ref{subsec:r2lcs} (and easily recoverable from \Cref{thm:hrm}).
\end{example}

\section{Type C toric arrangements} \label{sec:typeC}
Let $\CC_n$ denote the Weyl type C toric arrangement in $(\C^\times)^n$, consisting of the hypersurfaces $x_i^2-1=0$ ($1\le i \le n)$, $x_i^{-1}x_j-1=0$, $x_ix_j-1=0$ ($1\le i<j\le n$), with complement $M(\CC_n)$. As the poset of layers $\P(\CC_n)$ is a Dowling poset, the arrangement $\CC_n$ is strictly supersolvable, see \cite[Example 5.1.8]{BD}. Topologically, repeatedly forgetting the last coordinate gives rise to a tower of fiber bundles
\[
M(\CC_n) \to M(\CC_{n-1}) \to \dots \to M(\CC_2) \to M(\CC_1).
\]
We illustrate our results by determining the integral lower central series Lie algebra and cohomology ring of the fundamental group of $M(\CC_n)$.

\subsection{The case \texorpdfstring{$n=2$}{n=2}} \label{subsec:typeC2} We begin in rank two, where we record the almost-direct product structure of the fundamental group of the complement of the type C toric arrangement in $(\C^\times)^2$.

Consider the type C toric arrangements $\CC_1$ in $\C^\times\subset \C$ and $\CC_2$ in $(\C^\times)^2 \subset \C^2$, given by the vanishing of the polynomials $x(x^2-1)$ and $x(x^2-1)y(y^2-1)(y-x)(xy-1)$, respectively. The strictly supersolvable bundle $M(\CC_2)\to M(\CC_1)$ is equivalent to the pullback of the Fadell-Neuwirth bundle $\Conf_6(\C) \to \Conf_5(\C)$ along the map 
$\rootmap\colon M(\CC_1) \to \Conf_5(\C)$ given by $\rootmap(x)=(0,1,-1,x,x^{-1})$. 

In $M(\CC_1)=\C\smallsetminus\{-1,0,1\}$, define loops $\gamma_1(t)=1-\frac{1}{2}\exp(2 \pi \ii t)$, $\gamma_{-1}(t)=-1+\frac{1}{2}\exp(2 \pi \ii t)$, $0\le t \le 1$, and paths $\gamma_0^+(t)=\frac{1}{2}\exp(2 \pi \ii t)$, $\gamma_0^-(t)=\frac{1}{2}\exp(2 \pi \ii (t+\frac{1}{2}))$, $0 \le t \le \frac{1}{2}$, where $\ii=\sqrt{-1}$, see \Cref{fig:CC1}. The fundamental group $\pi_1(M(\CC_1))$, based at $*=x_0=\frac{1}{2}$, is generated by the homotopy classes of the loops 
$\gamma_0^+\centerdot \gamma_{-1} \centerdot \bar{\gamma}_0^+$, $\gamma_0^+\centerdot \gamma_0^-$, and $\gamma_1$, where $\bar\lambda(t)=\lambda(1-t)$ is the reverse path and $\lambda\centerdot\mu$ is concatenation. Denoting these classes by $\ns_{1}$ (loop about $x=-1$), $\zs_1$ (about $x=0$), and $\ps_1$ (about $x=1$), $\pi_1(M(\CC_1),x_0)$ is the free group $F_3=\langle\zs_1,\ps_1,\ns_1\rangle$ on three generators. 
\begin{figure}[h] 
\begin{tikzpicture} 
\draw [-stealth](1.5,0) -- (1.5,.01);
\draw [-stealth](-1.5,0) -- (-1.5,-.01);
\draw [-stealth](0,.5) -- (-.01,.5);
\draw [-stealth](0,-.5) -- (.01,-.5);
\draw[color=black] (1,0) circle [radius=.5]; 
\draw[color=black] (0,0) circle [radius=.5]; 
\draw[color=black] (-1,0) circle [radius=.5]; 
\node at (0,0) {\small{$0$}};
\node[color=black] at (1,0) {\small{$1$}};
\node[color=black] at (-1.1,0) {\small{$-1$}};
\node[color=black] at (1.8,0) {\small{$\gamma_1$}};
\node[color=black] at (-1.8,0) {\small{$\gamma_{-1}$}};
\node[color=black] at (0,.8) {\small{$\gamma^+_0$}};
\node[color=black] at (0,-.8) {\small{$\gamma^-_0$}};
\node[color=black,anchor=center,inner sep=.5,fill=white] at (.5,0) {$*$};
\end{tikzpicture} 
\caption{Loops and paths in $M(\CC_1)$} \label{fig:CC1} 
\end{figure}

The above paths may be used to determine the map $\rootmap_\sharp \colon F_3 \to P_5$ induced by $\rootmap$ on fundamental groups, where $P_5=\pi_1(\Conf_5(\C),\rootmap(x_0))$ is the 5-string pure braid group. Ordering braid strands by increasing real part at the basepoint $\rootmap(x_0)=(0,1,-1,\frac{1}{2},2)$, one can check that
\[
\zs_{1}\mapsto a_{2,3}\bigl(a_{1,5}a_{2,5}a_{3,5}a_{4,5}\bigr)^{-1}, \quad \ps_{1}\mapsto a_{3,4}a_{3,5}a_{4,5},
\quad \ns_{1}\mapsto\bigl(a_{1,3}a_{1,5}a_{3,5}\bigr)^{a_{2,5}a_{3,5}a_{4,5}}, 
\]
where the $a_{i,j}$ are the standard generators of the pure braid group and $u^v=v^{-1}uv$. 
Conjugating by $\sigma_1^{-1}\sigma_3^{}\sigma_2^{}\in B_5$ yields an automorphism of $P_5$, $u\mapsto u^{\sigma_1^{-1}\sigma_3^{}\sigma_2^{}}$, which insures that the strands of $P_5$ correspond to the order of the roots given by $\rootmap(x)$ as in \Cref{sec:hrm} (and slightly simplifies subsequent fundamental group calculations). Carrying this out yields
\begin{equation} \label{eq:pi1root}
\rootmap_\sharp(\zs_{1})=a_{1,4}^{a_{2,4}a_{3,4}}a_{4,5}^{-1}a_{3,5}^{-1}a_{2,5}^{-1}a_{1,5}^{-1}, \quad  \rootmap_\sharp(\ps_{1})=a_{2,4}a_{2,5}a_{4,5}, \quad 
\rootmap_\sharp(\ns_{1})=\bigl(a_{3,4}a_{3,5}a_{4,5}\bigr)^{a_{1,4}^{-1}a_{1,3}^{-1}a_{1,2}^{-1}}.  
\end{equation}

The map $\rootmap_\sharp \colon F_3 \to P_5$, together with 
the Artin representation $\hat\alpha_5\colon P_5 \to \Aut(F_5)$, determines the almost-direct product structure of the fundamental group $\pi_1(M(\CC_2)) = F_5 \rtimes_\phi F_3$, where $\phi=\hat\alpha_5 \circ \rootmap_\sharp$. Denote the generators of $F_5$ by
$\ys_1=\zs_2$, $\ys_2=\ps_2$, $\ys_3=\ns_2$, $\ys_4=\as_{1,2}$, and $\ys_5=\bs_{1,2}$,
when viewing them as elements of $\pi_1(M(\CC_2))$. Then, the group $\pi_1(M(\CC_2))$ has generators $\zs_1,\ps_1,\ns_1,\zs_2,\ps_2,\ns_2,\as_{1,2},\bs_{1,2}$, and relations $u^{-1}v u = \phi(u)(v)$, where $\phi(u)(v)=w(u,v) \cdot v\cdot w(u,v)^{-1}=[w(u,v),v]\cdot v$ for $u$ and $v$ generators of $F_3$ and $F_5$, respectively. Letting $\vs=\zs_2^{\ps_2\ns_2\as_{1,2}}$ and $\ws=\zs_2^{}\ps_2^{}\ns_2^{}\as_{1,2}^{}\bs_{1,2}^{}$, calculations with the Artin representation \eqref{eq:PureArtin} yield: 
\[
\begin{matrix}
\begin{aligned}
    w(\zs_1,\zs_2)&=\bs_{1,2}^{-1}\ws\bs_{1,2}^{-1}\ns_2^{-1}\ps_2^{-1}, \\
    w(\zs_1,\ps_2)&=\bs_{1,2}^{-1}, \\
    w(\zs_1,\ns_2)&= \bs_{1,2}^{-1},\\
    w(\zs_1,\as_{1,2})&= \bs_{1,2}^{-1}\as_{1,2}^{}\vs,\\
    w(\zs_1,\bs_{1,2})&=\ws^{-1},
\end{aligned}  
&
\begin{aligned}
    w(\ps_1,\zs_2)&=1, \\
    w(\ps_1,\ps_2)&=\ps_2\as_{1,2}^{}\bs_{1,2}^{}, \\
    w(\ps_1,\ns_2)&= [\ps_2^{},\as_{1,2}^{}\bs_{1,2}^{}],\\
    w(\ps_1,\as_{1,2})&=\ps_2\as_{1,2}^{}\bs_{1,2}^{}, \\
    w(\ps_1,\bs_{1,2})&=\ps_2\as_{1,2}^{}\bs_{1,2}^{},
\end{aligned}  
&
\begin{aligned}
    w(\ns_1,\zs_2)&=\ws\vs^{-1}\ps_2^{-1} \\
    w(\ns_1,\ps_2)&= 1,\\
    w(\ns_1,\ns_2)&=\ps_2^{-1}\ws\vs^{-1}, \\
    w(\ns_1,\as_{1,2})&=\ps_2^{-1}\ws\vs^{}, \\
    w(\ns_1,\bs_{1,2})&=\vs^{-1}\ps_2^{-1}\ws.
\end{aligned}  
\end{matrix}
\]

Passing to homology, denote the generators of $H_1(M(\CC_1))\cong\Z^3$ and $H_1(\Conf_5(\C))\cong \Z^{10}$ by $Z_1=[\zs_1]$, $P_1=[\ps_1]$, $N_1=[\ns_1]$, and $A_{i,j}=[a_{i,j}]$, $1\le i<j\le 5$. From \eqref{eq:pi1root} or \Cref{thm:hrm}, the  
homological root homomorphism $\rootmap_*\colon H_1(M(\CC_1)) \to H_1(\Conf_5(\C))$ is then given by
\[
\rootmap_*(Z_1)=A_{1,4}-A_{1,5}-A_{2,5}-A_{3,5}-A_{4,5}, \quad \rootmap_*(P_1)=A_{2,4}+A_{2,5}+A_{4,5}, \quad 
\rootmap_*(N_1)=A_{3,4}+A_{3,5}+A_{4,5}.  
\]
By Theorems \ref{thm:LCSliealg} and \ref{thm:H*ring}, this may be used to determine the LCS Lie algebra $\lie(\CC_2)$ and the cohomology ring $H^*(M(\CC_2))$. We discuss the requisite calculations for general $n$ below.

\subsection{General \texorpdfstring{$n$}{n}} \label{subsec:typeCn}
Let $\CC_n$ denote the type C toric arrangement in $(\C^\times)^n \subset\C^n$, given by the (connected) hypersurfaces
\[
x_i^{}=0,\quad  x_i^{}=1,\quad x_i^{}=-1 \quad (1\le i \le n),\qquad x_j^{}=x_i^{},\quad x_i^{}x_j^{}=1 \quad (1\le i<j\le n).
\]
Let $M(\CC_n)$ be the complement, with fundamental group $G(\CC_n)=\pi_1(M(\CC_n))$. Since $M(\CC_n)$ is a $K(G(\CC_n),1)$-space, we have $H_*(M(\CC_n))=H_*(G(\CC_n))$, $H^*(M(\CC_n))=H^*(G(\CC_n))$, etc. 

The first integral cohomology $H^1(M(\CC_n))$ is free abelian of rank $n+2n+2\binom{n}{2}$, generated by 
\begin{equation} \label{eq:Cforms}
\begin{array}{lll}
\ \hskip 5pt z_i= \frac{1}{2\pi\ii}\mathrm{dlog}(x_i),&
\ \hskip 3.5pt \rho_i= \frac{1}{2\pi\ii}\mathrm{dlog}(x_i-1),&
\eta_i= \frac{1}{2\pi\ii}\mathrm{dlog}(x_i+1),\\
\alpha_{i,j}= \frac{1}{2\pi\ii}\mathrm{dlog}(x_j-x_i),\quad&
\beta_{i,j}= \frac{1}{2\pi\ii}\mathrm{dlog}(x_ix_j-1).\quad
\end{array}
\end{equation}
Denote the dual basis for 
$H_1(M(\CC_n))$ by $\{Z_i, P_i, N_i \mid 1\le i\le n\}\cup\{ 
X_{i,j}, Y_{i,j} \mid 1\le i<j\le n\}$, with $Z_i$ dual to $z_i$, $P_i$ dual to $\rho_i$, $N_i$ dual to $\eta_i$, $X_{i,j}$ dual to $\alpha_{i,j}$, and $Y_{i,j}$ dual to $\beta_{i,j}$ (so that, for instance, $Y_{i,j}$ denotes the class of a loop in $M(\CC_n)$ about only the hypersurface $x_ix_j=1$).

Let $\lie_n=\lie(\CC_n)$ be the integral LCS Lie algebra of the group $G(\CC_n)$, and let $\clie_n=\LL[H_1(M(\CC_n))]$ be the free Lie algebra generated by $Z_i,P_i,N_i$ ($1\le i \le n$), $X_{i,j},Y_{i,j}$ ($1\le i<j\le n$).

\begin{theorem} \label{thm:Cholo} The Lie algebra $\lie_n \cong \clie_n/{\mathcal J}_n$ is isomorphic to the quotient of the free Lie algebra $\clie_n$ by the Lie ideal ${\mathcal J}_n$ generated, for $1\le i,j,k\le n$ 
with $i<j$, resp., $i<j<k$, where relevant,  by
\[
\begin{array}{ll}
{\displaystyle{{[}Z_i-Z_j-P_j-N_j-\sum_{q=1}^{j-1} (X_{q,j}+ Y_{q,j}),\,Y_{i,j}],}}\\
{[}Z_i+Z_j+X_{i,j}-Y_{i,j},\,X]\  \text{for}\ X=Z_j, X_{i,j}, & {[}Z_i-Y_{i,j},\,X]\   \text{for}\ X=P_j, N_j, X_{q,j}, Y_{q,j}\, (q\neq i), \\{[}P_{i}+P_j+X_{i,j}+Y_{i,j},\,X]\  \text{for}\ X=P_j, X_{i,j}, Y_{i,j},
&
{[}P_{i},\,X]\  \text{for}\ X=Z_j, N_j, X_{q,j}, Y_{q,j}\, (q\neq i),\\  
{[}N_{i}+N_j+X_{i,j}+Y_{i,j},\,X]\  \text{for}\ X=N_j,X_{i,j}, Y_{i,j},
&
{[}N_{i},\,X]\  \text{for}\ X=Z_j, P_j, X_{q,j}, Y_{q,j}\, (q\neq i),\\
{[}X_{i,j}+X_{i,k}+X_{j,k},\,X]\  \text{for}\ X=X_{i,k},X_{j,k},
&
{[}Y_{i,j}+X_{i,k}+Y_{j,k},\,X]\  \text{for}\ X=X_{i,k},Y_{j,k},
\\
{[}X_{i,j}+Y_{i,k}+Y_{j,k},\,X]\  \text{for}\ X=Y_{i,k},Y_{j,k},&
{[}Y_{i,j}+Y_{i,k}+X_{j,k},\,X]\  \text{for}\ X=Y_{i,k},X_{j,k},\\
{[}X_{i,j},\,X]\ \text{for}\ X=Z_k,P_k,N_k,X_{q,k},Y_{q,k} \,  (q\neq i,j),
&
{[}Y_{i,j},\,X]\ \text{for}\ X=Z_k,P_k,N_k,X_{q,k},Y_{q,k} \, (q\neq i,j).
\end{array}
\]
\end{theorem}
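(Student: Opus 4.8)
The plan is to realize $\CC_n$ as a strictly supersolvable toric arrangement with the evident flag, identify the associated root maps explicitly, and then read the presentation off \Cref{thm:LCSliealg}, \Cref{rem:liepres} and \Cref{thm:hrm}, using the infinitesimal pure braid relations \eqref{eq:infbraid} to unwind the adjoint actions.

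First I would check that $\CC_n$ is essential of rank $n$ and strictly supersolvable with respect to the chain $\CC_1\subset\CC_2\subset\cdots\subset\CC_n$, where $\CC_k$ is the type C arrangement in $(\C^\times)^k$ built from the hypersurfaces involving only $x_1,\dots,x_k$. At the $k$-th step one takes the admissible subgroup $Y_{k-1}=\{(t_1,\dots,t_k):t_1=\cdots=t_{k-1}=1\}$, so that $(\CC_k)_{Y_{k-1}}$ is $\CC_{k-1}$ (pulled back to $(\C^\times)^k$) and the quotient projection forgets the last coordinate $x_k$. The M-ideal condition reduces to the observation that any two of the new hypersurfaces $\{x_k=1\}$, $\{x_k=-1\}$, $\{x_k=x_a^{\pm1}\}$ ($a<k$) meet, on each connected component, inside one of the old hypersurfaces $\{x_a=\pm1\}$, $\{x_a=x_b\}$, $\{x_ax_b=1\}$ (for instance $\{x_k=x_a\}\cap\{x_k=x_a^{-1}\}=\{x_a=1,x_k=1\}\sqcup\{x_a=-1,x_k=-1\}$, each component lying in $\{x_a=\pm1\}$), while the extra TM-condition is immediate since each new hypersurface meets $Y_{k-1}$ in a single point. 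Via \Cref{subsec:polys} this also identifies the root map: $\rootmap_k\colon M(\CC_{k-1})\to\Conf_{n_k}(\C)$, $n_k=2k+1$, with $\rootmap_k(\mathbf{x})=(0,1,-1,x_1,x_1^{-1},\dots,x_{k-1},x_{k-1}^{-1})$, where the $0$-root comes from the coordinate axis $x_k=0$ and is placed on the first strand as prescribed in \Cref{sec:hrm}.

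By \Cref{cor:almostdirect}, \Cref{thm:LCSliealg} and \Cref{rem:liepres}, $\lie_n\cong\clie_n/\mathcal J_n$, where $\mathcal J_n$ is generated by the elements $[e,e']-\ad\bigl((\rootmap_j)_*(e)\bigr)(e')$ with $e$ a first-homology generator of some stage $i<j$ and $e'$ one of stage $j$; here the stage-$j$ generators $z_j,\rho_j,\eta_j,\alpha_{i,j},\beta_{i,j}$ ($i<j$) are identified, in this order, with the free generators $A_{1,2j+2},A_{2,2j+2},A_{3,2j+2},A_{2i+2,2j+2},A_{2i+3,2j+2}$ of $\LL[n_j]$, on which each $A_{a,b}$ acts by $\ad(A_{a,b})$ through \eqref{eq:infbraid}. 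It remains to compute $(\rootmap_j)_*$ on the earlier generators by \Cref{thm:hrm}. For $\rho_k,\eta_k$ ($k<j$) and $\alpha_{i,j'},\beta_{i,j'}$ ($i<j'<j$) the image is the sum of those $A_{a+1,b+1}$ for which the corresponding hypersurface of $\CC_{j-1}$ is a component of $\proj(H_a\cap H_b)$: a brief enumeration of which pairs of new hypersurfaces of $\CC_j$ contribute — for example $\{x_k=1\}$ arises from $\{x_j=1\}\cap\{x_j=x_k^{\pm1}\}$ and from $\{x_j=x_k\}\cap\{x_j=x_k^{-1}\}$, giving $(\rootmap_j)_*(\rho_k)=A_{2,2k+2}+A_{2,2k+3}+A_{2k+2,2k+3}$, and $(\rootmap_j)_*(\alpha_{i,j'})=A_{2i+2,2j'+2}+A_{2i+3,2j'+3}$ — followed by \eqref{eq:infbraid}, yields exactly the $\rho$-, $\eta$-, $\alpha$- and $\beta$-headed families of relations in the statement (including the orthogonality relations ``$[\rho_i,X]=0$'', ``$[\alpha_{i,j},X]=0$'', which record that the indices not appearing in $(\rootmap_j)_*(e)$ annihilate $e$). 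For $z_k$ one invokes the $\min$-formula of \Cref{thm:hrm}: the $k$-th exponent $m_{p,k}$ is nonzero only for the two new hypersurfaces $\{x_j=x_k\}$ and $\{x_j=x_k^{-1}\}$, with values $+1$ and $-1$, so the formula collapses to $(\rootmap_j)_*(z_k)=A_{1,2k+2}-\bigl(\sum_{a<2k+3}A_{a,2k+3}+\sum_{b>2k+3}A_{2k+3,b}\bigr)$; applying \eqref{eq:infbraid} then produces the three $z_i$-headed families, among them the long relation $[z_i-z_j-\rho_j-\eta_j-\sum_q(\alpha_{q,j}+\beta_{q,j}),\,\beta_{i,j}]$.

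All of this is a direct specialization of \Cref{sec:ssta} and \Cref{sec:hrm}, so there is no conceptual difficulty; the work is bookkeeping. The two points requiring care are (i) fixing compatible orderings of the new hypersurfaces at every stage so that the strand labels in \Cref{thm:hrm} and the identification of the stage-$j$ generators with the $A_{\ast,2j+2}$ are unambiguous, and (ii) the $z_k$ computation, where the $\min$-formula contributes an $A_{2k+3,b}$ for every later strand $b$ and hence, after \eqref{eq:infbraid}, a term for every stage-$j$ generator — this is precisely the source of the inhomogeneous-looking relation attached to $z_i$ and $\beta_{i,j}$, and I expect verifying this translation to be the main (routine but delicate) step.
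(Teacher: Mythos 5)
Your proposal is correct and follows essentially the same route as the paper's proof: realize the strictly supersolvable flag $\CC_1\subset\cdots\subset\CC_n$, compute the root maps $\rootmap_j(\mathbf{x})=(0,1,-1,x_1,x_1^{-1},\dots,x_{j-1},x_{j-1}^{-1})$, read off the homological root homomorphisms from \Cref{thm:hrm} (your calculation of $(\rootmap_j)_*(z_k)$ via the $\min$-formula matches Proposition 7.4.1 exactly, including the full collection of $-A_{a,2k+3}$ and $-A_{2k+3,b}$ terms), and translate via the infinitesimal pure braid relations \eqref{eq:infbraid} with the strand-to-generator dictionary $A_{1,2j+2}\to z_j$, $A_{2,2j+2}\to\rho_j$, $A_{3,2j+2}\to\eta_j$, $A_{2i+2,2j+2}\to\alpha_{i,j}$, $A_{2i+3,2j+2}\to\beta_{i,j}$. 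The paper packages this stage-by-stage argument as an explicit induction on $n$ and isolates the homological root homomorphism in a standalone proposition, but the underlying mechanism and computations are the same as yours.
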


We now turn our attention to the integral cohomology ring of $M(\CC_n)$. 
Let $\EE_n=\bigwedge[H^1(M(\CC_n))]$ be the exterior algebra 
generated by 
$z_i,\rho_i,\eta_i$ ($1\le i \le n$), $\alpha_{i,j},\beta_{i,j}$ ($1\le i<j\le n$).

\begin{theorem} \label{thm:Ccoho} The cohomology ring $H^*(M(\CC_n)) \cong \EE_n/{\II}_n$ is isomorphic to the quotient of the exterior algebra $\EE_n$ by the ideal ${\II}_n$ generated, for $1\le i,j,k\le n$ 
with $i<j$, resp., $i<j<k$, where relevant,  by
\[
\begin{array}{lll}
 z_i \rho_i,\qquad   z_i \eta_i,\qquad   \rho_i \eta_i,\\[2pt]
 ( z_j- z_i)( \alpha_{i,j}- z_i),  & ( \rho_j- \rho_i)( \alpha_{i,j}- \rho_i), &  ( \eta_j- \eta_i)( \alpha_{i,j}- \eta_i),\\
( z_j+ z_i)\beta_{i,j}, & (\rho_j+z_i-\rho_i)(\beta_{i,j}-\rho_i),& (\eta_j+z_i-\eta_i)(\beta_{i,j}-\eta_i),\\
(\alpha_{i,j}+ z_i-\rho_i-\eta_i)(\beta_{i,j}-\rho_i-\eta_i),\\[2pt]
(\alpha_{i,k}-\alpha_{i,j})(\alpha_{j,k}-\alpha_{i,j}),   
&(\alpha_{i,k}+ z_j-\beta_{i,j})(\beta_{j,k}-\beta_{i,j}),\\
(\beta_{i,k}-\beta_{i,j})(\alpha_{j,k}+z_i-\beta_{i,j}), 
&(\beta_{i,k}+ z_j-\alpha_{ij})(\beta_{j,k}+z_i-\alpha_{i,j}).
\end{array}
\]
\end{theorem}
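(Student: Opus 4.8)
The plan is to derive the stated presentation of $H^*(M(\CC_n);\Z)$ from \Cref{thm:H*ring} together with the explicit homological root homomorphisms supplied by \Cref{thm:hrm}, running parallel to the proof of \Cref{thm:Cholo}. First I would recognize $\CC_n$ as an essential, strictly supersolvable toric arrangement: the coordinate projections $(\C^\times)^n\to(\C^\times)^k$ exhibit $\CC_k$ as a quotient of $\CC_n$, and in passing from $\CC_{k-1}$ to $\CC_k$ the new hypersurfaces $x_k=1$, $x_k=-1$, $x_k=x_i$, $x_k=x_i^{-1}$ ($i<k$) all have the form $x_k=(\text{root of unity})\cdot(\text{monomial in }x_1,\dots,x_{k-1})$. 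Hence the associated simple Weierstrass polynomial of \S\ref{subsec:polys} is completely solvable, and with $Y$ the last-coordinate subtorus one checks directly from these equations the TM-ideal conditions of \Cref{def:MTM} (the M-ideal property, and connectedness of $H\cap Y$ for each new $H$). Essentiality is clear since $\{x_1=\dots=x_n=1\}$ is a layer. So the chain $\CC_1\subset\dots\subset\CC_n$ witnesses strict supersolvability, and Theorems \ref{thm:H*ring} and \ref{thm:hrm}, the homology basis of \Cref{prop:H1basis}, and the presentation framework of \Cref{rem:presentations}/\Cref{rem:liepres} all apply.

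Second, I would compute each $(\rootmap_k)_*\colon H_1(M(\CC_{k-1}))\to H_1(\Conf_{2k+1}(\C))$ using \Cref{thm:hrm}. Ordering the $2k$ non-axis strands as $x_k=1,\ x_k=-1,\ x_k=x_1,\dots,x_k=x_{k-1},\ x_k=x_1^{-1},\dots,x_k=x_{k-1}^{-1}$, the fiber generators $z_k$ (axis), $\rho_k,\eta_k,\alpha_{i,k},\beta_{i,k}$ correspond to the strand loops. For the ``hypersurface'' generators $\rho_m,\eta_m,\alpha_{i,m},\beta_{i,m}$ ($m<k$) of $H_1(M(\CC_{k-1}))$ the combinatorial half of \Cref{thm:hrm} applies: one lists the pairs of new hypersurfaces whose intersection projects onto the given layer — e.g.\ $\{x_k=x_i\}\cap\{x_k=x_m\}$ and $\{x_k=x_i^{-1}\}\cap\{x_k=x_m^{-1}\}$ project onto $\{x_i=x_m\}$, giving $(\rootmap_k)_*(\alpha_{i,m})=A_{3+i,3+m}+A_{2+k+i,2+k+m}$, while $\{x_k=x_i\}\cap\{x_k=x_i^{-1}\}$ projects onto the two components $\{x_i=1\}$ and $\{x_i=-1\}$, contributing to both $(\rootmap_k)_*(\rho_i)$ and $(\rootmap_k)_*(\eta_i)$. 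For the axis generators $z_m$ one uses the $\min$-formula, which collapses here because $m_{j,m}$ is nonzero only on the two strands $x_k=x_m$ and $x_k=x_m^{-1}$. This reproduces the rank-two formulas of \S\ref{subsec:typeC2} and yields closed expressions for all $(\rootmap_k)_*$.

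Third, these are fed into the machinery in the proof of \Cref{thm:H*ring}: $H^*(M(\CC_n);\Z)\cong\EE_n/\II_n$ with $\II_n$ the ideal generated by $\ker(\ab^*\colon H^2\to H^2(M(\CC_n)))$, and this degree-two kernel is the annihilator — under the standard pairing $\bigwedge^2 H^1\times\bigwedge^2 H_1\to\Z$ — of the span of the classes $\ab_*(\mathbf r)=(e_\xs+\mathsf{W})\wedge e_\ys$, equivalently of the span of the products $v_s\wedge w_s$ attached to the generators $[v_s,w_s]$ of the Lie ideal ${\mathcal J}_n$ of \Cref{thm:Cholo}. Here each $\mathsf{W}$ is the abelianized conjugator of $\ys$, read off from the relevant $(\rootmap_k)_*$ and the pure Artin representation \eqref{eq:PureArtin} (equivalently \eqref{eq:infbraid}). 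I would then (i) verify each listed quadric lies in this annihilator — after noting that $z_i\rho_i,z_i\eta_i,\rho_i\eta_i$ do (these are forced by the free group $\pi_1(M(\CC_1))=F_3$ at the bottom of the tower), the remaining pairings to check reduce to those of a level-$k$ quadric against a step-$k$ relation, all others vanishing for index/level reasons; (ii) count, obtaining $\sum_{k=1}^n\binom{2k+1}{2}=\binom{N}{2}-\dim H_2(M(\CC_n))$ with $N=n(n+2)$; and (iii) observe that, organized by the leading fiber-$k$ pair, the listed quadrics are linearly independent. Hence they form a $\Z$-basis of $\ker(\ab^*)$ in degree two, so generate $\II_n$, giving the stated presentation.

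The principal difficulty lies in the bookkeeping of steps two and three: correctly cataloguing which intersections of the new hypersurfaces project onto which layer of $\CC_{k-1}$ — the one genuinely subtle point being the non-primitive character $x_i^2$, whose zero set $\{x_i=\pm1\}$ is disconnected — and then reorganizing the resulting quadratic relations into the compact displayed form. One must also take care, as stressed in \S\ref{sec:hrm}, that the ordering of roots in each root map agrees with the ordering of strands in $P_{2k+1}$; this is arranged by a braid conjugation that leaves $(\rootmap_k)_*$ unchanged but enters the fundamental-group computations recorded for $n=2$ in \S\ref{subsec:typeC2}. A convenient consistency check is that the ideal $\II_n$ so obtained is the exact annihilator of the span of the quadratic part of the Lie ideal ${\mathcal J}_n$ of \Cref{thm:Cholo} — the rational shadow of the Koszul duality of \Cref{cor:koszul}.
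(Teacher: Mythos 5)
Your overall strategy coincides with the paper's: reduce to \Cref{thm:H*ring}, compute the homological root homomorphisms via \Cref{thm:hrm} (i.e.\ \Cref{prop:homology g}), and identify $\II_n$ with the degree-two kernel of $\ab^*$, equivalently the annihilator of $\mathrm{im}(\ab_*)$. Your dimension count $\sum_{k=1}^n\binom{2k+1}{2}$ is also correct, and the observation that only step-$k$ relations pair nontrivially against level-$k$ quadrics is sound. The chief difference is one of direction: the paper \emph{derives} each new quadric inductively from the relevant Lie relations, while you propose to \emph{verify} the listed quadrics and then count. That difference is stylistic and would be fine in principle.

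There is, however, a genuine gap, and it is precisely the point the paper's proof singles out. When you read off the products $(e_\xs+\mathsf{W})\wedge e_\ys$ from the configuration-space model, the ``fiber'' class attached to the hypersurface $x_ix_{n+1}=1$ in the $\Conf_{N+1}(\C)\to\Conf_N(\C)$ pullback is the class of $\frac{1}{2\pi\ii}\mathrm{dlog}(x_{n+1}-x_i^{-1})=\beta_{i,n+1}-z_i$, \emph{not} the standard generator $\beta_{i,n+1}=\frac{1}{2\pi\ii}\mathrm{dlog}(x_ix_{n+1}-1)$. The paper handles this by introducing $\hat\beta_{i,n+1}=\beta_{i,n+1}-z_i$, carrying out the pairing analysis in that shifted basis, and only at the end rewriting in terms of $\beta_{i,n+1}$. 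Your proposal never addresses this shift. If one runs your step (i) naively, pairing, say, the quadric $(\alpha_{i,k}+z_j-\beta_{i,j})(\beta_{j,k}-\beta_{i,j})$ against the element $(z_j-\beta_{j,k})\wedge\alpha_{i,k}$ that a literal reading of the relation $[z_j-\beta_{j,k},\alpha_{i,k}]$ would suggest, the pairing works out to $1$ rather than $0$; only after the $\hat\beta$ shift in the second factor does the corresponding $H_2$ class become $z_j\wedge\alpha_{i,k}$, which the quadric annihilates. Without identifying this change of basis your verification would appear to fail, and you would not be able to conclude. So the missing ingredient is not cosmetic bookkeeping but the specific observation that the inverse characters $x_i^{-1}$ force a shift $\beta_{i,n+1}\rightsquigarrow\beta_{i,n+1}-z_i$ when passing between the configuration-space generators and the standard arrangement generators. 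Your plan should be amended to incorporate this before the verification step can go through.
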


It is readily checked that the differential forms \eqref{eq:Cforms} satisfy the relations defining the ideal $\II_n$ given in the statement of the theorem.

\begin{remark}
Since the arrangement $\CC_n$ is strictly supersolvable, by \Cref{cor:koszul}, the cohomology ring $H^*(M(\CC_n))$ is a Koszul algebra.
\end{remark}

Both Theorems \ref{thm:Cholo} and \ref{thm:Ccoho} may be established by induction.  
After some necessary preliminaries, we sketch proofs below.

For the strictly supersolvable bundle $M(\CC_{n+1}) \to M(\CC_n)$, one choice of the root map $\rootmap \colon M(\CC_n) \to \Conf_K(\C)$, where $K=2n+3$, is
\begin{equation} \label{eq:Croot}
\rootmap(x_1^{},x_2^{},\dots,x_n^{})=\bigl(0,1,-1,x_1^{},x_1^{-1},x_2^{},x_2^{-1},\dots,x_n^{},x_n^{-1}\bigr).
\end{equation}
This corresponds to ordering the coordinate hyperplane and the hypersurfaces in $\CC_{n+1}\smallsetminus\CC_n$ as follows:
\begin{equation*} \label{eq:order}
x^{}_{n+1}=0,\ x^{}_{n+1}=1,\ x^{}_{n+1}=-1,\ x^{}_{n+1}=x^{}_1,\ x^{}_{n+1}=x^{-1}_1,\dots,\  x^{}_{n+1}=x_n^{},\  x^{}_{n+1}=x^{-1}_n. 
\end{equation*}
Recall the generators 
$Z_i,P_i,N_i$ ($1\le i \le n$), $X_{i,j},Y_{i,j}$ ($1\le i<j\le n$)
for the first homology group $H_1(M(\CC_n))$ dual to the cohomology generators \eqref{eq:Cforms}, and recall that the generators of $H_1(\Conf_K(\C))=\Z^{\binom{K}{2}}$ are denoted by $A_{i,j}$, $1\le i < j \le K$.

\begin{proposition} \label{prop:homology g}
The homological root homomorphism $\rootmap_* \colon H_1(M(\CC_n)) \to H_1(\Conf_K(\C))$ is given by
\[
\begin{aligned}
\rootmap_*(Z_i)&=A_{1,2i+2}-A_{1,2i+3}-A_{2,2i+3}-\cdots -A_{2i+2,2i+3}-A_{2i+3,2i+4}-A_{2i+3,2i+5}-\cdots-A_{2i+3,K},\\
\rootmap_*(P_i)&=A_{2,2i+2}+A_{2,2i+3}+A_{2i+2,2i+3},\qquad\qquad \rootmap_*(X_{i,j})=A_{2i+2,2j+2}+A_{2i+3,2j+3},\\
\rootmap_*(N_i)&=A_{3,2i+2}+A_{3,2i+3}+A_{2i+2,2i+3},\qquad\qquad \rootmap_*(Y_{i,j})=A_{2i+2,2j+3}+A_{2i+3,2j+2}.
\end{aligned}
\]
\end{proposition}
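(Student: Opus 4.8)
The plan is to apply \Cref{thm:hrm} directly: the only work is to record the combinatorial data of $\CC_{n+1}$ over $\CC_n$, substitute it into the two displayed formulas of that theorem, and keep careful track of a single index shift.

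First I would fix notation. For the strictly supersolvable pair $\CC_{n+1}$ over $\CC_n$, the relevant TM-ideal corresponds to $\A\smallsetminus\A_Y=\CC_{n+1}\smallsetminus\CC_n=\{H_1,\dots,H_{2n+2}\}$, which, in the order fixed by \eqref{eq:Croot}, are $H_1\colon x_{n+1}=1$, $H_2\colon x_{n+1}=-1$, $H_{2i+1}\colon x_{n+1}=x_i$, and $H_{2i+2}\colon x_{n+1}=x_i^{-1}$ for $1\le i\le n$. Thus, in the notation of \Cref{thm:hrm}, $\mathbf m_1=\mathbf m_2=0$, while $\mathbf m_{2i+1}$ and $\mathbf m_{2i+2}$ are plus and minus the $i$-th standard basis vector of $\Z^n$; and under \eqref{eq:Croot} the hypersurface $H_j$ contributes the root $b_{j+1}$ (the shift by one coming from the leading root $b_1=0$), so that $x_{n+1}=1\leftrightarrow b_2$, $x_{n+1}=-1\leftrightarrow b_3$, $x_{n+1}=x_i\leftrightarrow b_{2i+2}$, $x_{n+1}=x_i^{-1}\leftrightarrow b_{2i+3}$. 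Under the identification of \Cref{prop:H1basis}, the homology generators $X_{p,k}$ of $H_1(M(\CC_n))$ become the generators \eqref{eq:H1gen}: the coordinate-axis class $X_{0,k}$ is $z_k$, while the classes of loops about $x_k=1$, $x_k=-1$, $x_k=x_i$, and $x_k=x_i^{-1}$ are $\rho_k$, $\eta_k$, $\alpha_{i,k}$, and $\beta_{i,k}$.

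For the axis generators $z_k$, I would substitute these $\mathbf m_j$ into the first formula of \Cref{thm:hrm}. For fixed $k$, the only $j$ with $m_{j,k}\neq0$ are $j=2k+1$ (with $m_{j,k}=1$) and $j=2k+2$ (with $m_{j,k}=-1$), so the $A_{1,\bullet}$-part collapses to $A_{1,2k+2}-A_{1,2k+3}$. For the $\min$-part I would use that, with all entries in $\{-1,0,1\}$, one has $\min\{m_{i,k},m_{j,k}\}=-1$ exactly when $m_{i,k}=-1$ or $m_{j,k}=-1$, equals $1$ only when $m_{i,k}=m_{j,k}=1$ (impossible for $i<j$, as only one index has $m=1$), and equals $0$ otherwise. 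Hence the contributions come only from $j=2k+2$, which forces every $i<2k+2$ to contribute $-A_{i+1,2k+3}$, and from the $j>2k+2$ with $m_{j,k}=0$ paired with $i=2k+2$, each contributing $-A_{2k+3,j+1}$; summing these gives exactly the claimed expression for $\rootmap_*(z_i)$.

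For the remaining generators I would use the second formula of \Cref{thm:hrm}: the coefficient of $A_{i'+1,j'+1}$ in $\rootmap_*(X_{p,k})$ is $1$ or $0$ according to whether $H_{p,k}$ is a connected component of $\proj(H_{i'}\cap H_{j'})=\{f_{i'}=f_{j'}\}$, where $H_{i'}\colon x_{n+1}=f_{i'}$ and $H_{j'}\colon x_{n+1}=f_{j'}$. A short case analysis gives: $H_1\cap H_{2i+1}$ and $H_1\cap H_{2i+2}$ project onto $\{x_i=1\}$, contributing $A_{2,2i+2}$ and $A_{2,2i+3}$ to $\rootmap_*(\rho_i)$; $H_2\cap H_{2i+1}$ and $H_2\cap H_{2i+2}$ project onto $\{x_i=-1\}$, contributing $A_{3,2i+2}$ and $A_{3,2i+3}$ to $\rootmap_*(\eta_i)$; for $i<j$ the pairs $(H_{2i+1},H_{2j+1})$ and $(H_{2i+2},H_{2j+2})$ project onto $\{x_j=x_i\}$, contributing $A_{2i+2,2j+2}$ and $A_{2i+3,2j+3}$ to $\rootmap_*(\alpha_{i,j})$; for $i<j$ the pairs $(H_{2i+1},H_{2j+2})$ and $(H_{2i+2},H_{2j+1})$ project onto $\{x_ix_j=1\}$, contributing $A_{2i+2,2j+3}$ and $A_{2i+3,2j+2}$ to $\rootmap_*(\beta_{i,j})$; and $H_1\cap H_2=\emptyset$, so no term $A_{2,3}$ appears. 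The one subtlety — and the only genuinely delicate point in the argument — is the pair $(H_{2i+1},H_{2i+2})$, for which $\proj(H_{2i+1}\cap H_{2i+2})=\{x_i^2=1\}=\{x_i=1\}\cup\{x_i=-1\}$ is disconnected; thus this single pair, whose configuration-space index is $(2i+2,2i+3)$, contributes $A_{2i+2,2i+3}$ to \emph{both} $\rootmap_*(\rho_i)$ and $\rootmap_*(\eta_i)$. Translating every contribution through the shift $H_j\leftrightarrow b_{j+1}$ and collecting terms yields the stated formulas; as a final check one verifies agreement with the $n=1$ instance recorded in \S\ref{subsec:typeC2}.
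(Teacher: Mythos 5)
Your proof is correct and follows the same route as the paper, which simply says ``\Cref{thm:hrm} yields the following'' and leaves the substitution implicit; you have supplied exactly the details that the paper outsources to \Cref{thm:hrm}. In particular you correctly identify the exponent vectors $\mathbf m_j\in\{-e_i,0,e_i\}$, correctly track the index shift $H_j\leftrightarrow b_{j+1}$, handle the $\min$-coefficients appropriately (the only nonzero $\min$ is $-1$, arising when one of the two exponents equals $-1$), and handle the one genuinely delicate case where $\proj(H_{2i+1}\cap H_{2i+2})=\{x_i^2=1\}$ is disconnected so that the pair contributes $A_{2i+2,2i+3}$ to both $\rootmap_*(\rho_i)$ and $\rootmap_*(\eta_i)$; the resulting formulas agree with those in the statement and with the $n=1$ case recorded in \S\ref{subsec:typeC2}.
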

\begin{proof}
To apply the results of \Cref{sec:hrm} in the context of the bundle $M(\CC_{n+1})\to M(\CC_n)$, we work with the basis
$\{\tilde z_i, \tilde \rho_i, \tilde \eta_i \mid 1\le i\le n\}\cup\{ 
\tilde\alpha_{i,j}, \tilde\beta_{i,j} \mid 1\le i<j\le n\}$
for the cohomology group $H^1(M(\CC_n))$, where $\tilde\alpha_{i,j}=\frac{1}{2\pi\ii}\mathrm{dlog}(x_i^{-1}x_j-1)=\alpha_{i,j}-z_i$, and 
$\tilde\upsilon=\upsilon$ for all other basis elements. Denote the dual basis for 
$H_1(M(\CC_n))$ by $\{\tilde Z_i, \tilde P_i, \tilde N_i \mid 1\le i\le n\}\cup\{ 
\tilde X_{i,j}, \tilde Y_{i,j} \mid 1\le i<j\le n\}$. \Cref{thm:hrm} yields:
\[
\begin{aligned}
\rootmap_*(\tilde Z_i)&=A_{1,2i+2}-\sum_{q=1}^{2i+2} A_{q,2i+3}
+\sum_{q=i+1}^{n} A_{2i+2,2q+2}-\sum_{q=i+1}^{n} A_{2i+3,2q+2},\\
\rootmap_*(\tilde P_i)&=A_{2,2i+2}+A_{2,2i+3}+A_{2i+2,2i+3},\qquad\qquad \rootmap_*(\tilde X_{i,j})=A_{2i+2,2j+2}+A_{2i+3,2j+3},\\
\rootmap_*(\tilde N_i)&=A_{3,2i+2}+A_{3,2i+3}+A_{2i+2,2i+3},\qquad\qquad \rootmap_*(\tilde Y_{i,j})=A_{2i+2,2j+3}+A_{2i+3,2j+2}.
\end{aligned}
\]

From the cohomology change of basis $\tilde\alpha_{i,j}=\alpha_{i,j}-z_i$ and $\tilde\upsilon=\upsilon$ for other basis elements, the elements of the desired homology basis are given by
\[
Z_i=\tilde Z_i-\sum_{j=i+1}^n \tilde X_{i,j},\quad P_i=\tilde P_i,\quad N_i=\tilde N_i,\quad X_{i,j}=\tilde X_{i,j},\quad Y_{i,j}=\tilde Y_{i,j}.
\]
With this, a straightforward calculation completes the proof.
\end{proof}

\begin{proof}[Proof sketch for Theorem \ref{thm:Cholo}]
The proof is by induction on $n$. Recall that $\lie_n$ denotes the integral lower central series Lie algebra of the fundamental group $G(\CC_n)=\pi_1(M(\CC_n))$. 

In the base case $n=1$, we have $M(\CC_1)=\C\smallsetminus\{-1,0,1\}$, $G(\CC_1)=F_3$, the free group on $3$ generators, and $\lie_1=\clie_3=\LL[H_1(M(\CC_1))]$, the free Lie algebra generated by $Z_1,P_1,N_1$. Note that in this instance, the Lie ideal $\mathcal{J}_1$ recorded in the statement of the theorem is zero.

Assuming inductively that $\lie_n \cong \clie_n/{\mathcal{J}}_n$, we must show that $\lie_{n+1} \cong \clie_{n+1}/{\mathcal{J}}_{n+1}$. 
Let $\LL[K]$ be the free Lie algebra generated by $A_{q,K+1}$, $1\le q \le K$.
From (the proof of) \Cref{thm:LCSliealg}, the Lie algebra $\lie_{n+1}$ is the semidirect product of $\lie_n$ by the free Lie algebra $\LL[K]$ determined by the Lie homomorphism $\Theta=\theta_K \circ \rootmap_* \colon \lie_n \to \Der(\LL[K])$, where $\theta_K(A_{i,j})=\ad(A_{i,j})$ and $\rootmap_*$ is induced by the root map $\rootmap \colon M(\CC_n) \to \Conf_K(\C)$. 
Recall from \eqref{eq:infbraid} that the adjoint action of the LCS Lie algebra of the pure braid group $P_{K}=\pi_1(\Conf_{K}(\C))$ on $\LL[K]$ is given on generators by
\begin{equation*}\label{eq:YBad}
\ad(A_{i,j})(A_{q,K+1})=[A_{i,j},A_{q,K+1}]=[A_{q,K+1},\,(\delta_{i,q}+\delta_{j,q})(A_{i,K+1}+A_{j,K+1})].
\end{equation*}

From the semidirect product structure of $\lie_{n+1}$, for $x\in \lie_n$ and $y\in \LL[K]$, we have 
\[
[x,y]=\Theta(x)(y) = \theta_K (\rootmap_*(x))(y)=\ad(\rootmap_*(x))(y) = [\rootmap_*(x),y]
\]
in $\LL[K] \subset \lie_{n+1}$. Using this and 
Proposition \ref{prop:homology g}, the generators $Z_i,P_i,N_i,X_{i,j},Y_{i,j}$ of $\lie_n$ and $A_{q,K+1}$ of $\LL[K]$ satisfy 
\begin{equation} \label{eq:newrelz}
\begin{aligned}
{[}Z_i,\ A_{q,K+1}]&=[A_{1,2i+2}-A_{1,2i+3}-\cdots -A_{2i+2,2i+3}-A_{2i+3,2i+4}-\cdots-A_{2i+3,K},\ A_{q,K+1}]\\
[P_i,\ A_{q,K+1}]&=[A_{2,2i+2}+A_{2,2i+3}+A_{2i+2,2i+3},\ A_{q,K+1}]\\
[N_i,\ A_{q,K+1}]&=[A_{3,2i+2}+A_{3,2i+3}+A_{2i+2,2i+3},\ A_{q,K+1}]\\
[X_{i,j},\ A_{q,K+1}]&=[A_{2i+2,2j+2}+A_{2i+3,2j+3},\ A_{q,K+1}]\\
[Y_{i,j},\ A_{q,K+1}]&=[A_{2i+2,2j+3}+A_{2i+3,2j+2},\ A_{q,K+1}]
\end{aligned}
\end{equation}

To complete the inductive proof, it suffices to show that the generators of the Lie ideal $\mathcal{J}_{n+1}$ not in $\mathcal{J}_{n}$ (i.e., those involving $Z_{n+1}$, $P_{n+1}$, $N_{n+1}$, $X_{i,n+1}$, $Y_{i,n+1}$) specified in the statement of the theorem correspond to the relations implicit in \eqref{eq:newrelz}. This may be accomplished using the infinitesimal pure braid relations \eqref{eq:infbraid} and the dictionary below.
\begin{center}
\setlength{\tabcolsep}{3pt}
\begin{tabular}{ c c c c c c c c c c c}
$A_{1,K+1}$&$A_{2,K+1}$&$A_{3,K+1}$&$A_{4,K+1}$&$A_{5,K+1}$&$\cdots$&$A_{2i+2,K+1}$&$A_{2i+3,K+1}$&$\cdots$&$A_{K-1,K+1}$&$A_{K,K+1}$\\
$Z_{n+1}$&$P_{n+1}$&$N_{n+1}$&$X_{1,n+1}$&$Y_{1,n+1}$&$\cdots$&$X_{i,n+1}$&$Y_{i,n+1}$&$\cdots$&$X_{n,n+1}$&$Y_{n,n+1}$
\end{tabular}
\end{center}
For example, we have
\[
\begin{aligned}
{[}P_i,\ A_{q,K+1}]&=[A_{2,2i+2}+A_{2,2i+3}+A_{2i+2,2i+3},\ A_{q,K+1}]\\
&=\begin{cases}
{[}A_{2,K+1},\ A_{2i+2,K+1}+A_{2i+3,K+1}]&\text{if $q=2$,}\\
{[}A_{2i+2,K+1},\ A_{2,K+1}+A_{2i+3,K+1}]&\text{if $q=2i+2$,}\\
{[}A_{2i+3,K+1},\ A_{2,K+1}+A_{2i+2,K+1}]&\text{if $q=2i+3$,}\\
0&\text{otherwise.}
\end{cases}
\end{aligned}
\]
This yields relations
\[
\begin{array}{ll}
{[}P_i+A_{2i+2,K+1}+A_{2i+3,K+1},\ A_{2,K+1}],  & {[}P_i+A_{2,K+1}+A_{2i+3,K+1},\ A_{2i+2,K+1}],\\
{[}P_i+A_{2,K+1}+A_{2i+2,K+1},\ A_{2i+3,K+1}], & {[}P_i,\ A_{q,K+1}]\quad \text{for}\ q\neq 2,2i+2,2i+3
\end{array}
\]
in $\lie_{n+1}$. Rewriting using the above dictionary, we obtain
\begin{align*}
&{[}P_i+X_{i,n+1}+Y_{i,n+1}, \ P_{n+1}], \quad {[}P_i+P_{n+1}+Y_{i,n+1},\ X_{i,n+1}],  \quad  {[}P_i+P_{n+1}+X_{i,n+1},\ Y_{i,n+1}],\\
&{[}P_i,\ Z_{n+1}],\quad [P_i,\ N_{n+1}], \quad {[}P_i,\ X_{q,n+1}], \quad {[}P_i,\ Y_{q,n+1}]\quad \text{for}\ q\neq i,
\end{align*}
which are equivalent formulations of the generators involving $P_i$ of $\mathcal{J}_{n+1}$ not in $\mathcal{J}_{n}$ in the statement of \Cref{thm:Cholo}.

The remaining generators of $\mathcal{J}_{n+1}$ not in $\mathcal{J}_{n}$ may be obtained from \eqref{eq:newrelz} in a similar manner. Details are left to the intrepid reader.
\end{proof}

\begin{proof}[Proof sketch for Theorem \ref{thm:Ccoho}]
The proof is by induction on $n$. 

In the base case $n=1$, we have $M(\CC_1)=\C\smallsetminus\{-1,0,1\}$, the exterior algebra $\EE_1$ is generated by $z_1,\rho_1,\eta_1$, and $\II_1=\langle z_1\rho_1,z_1\eta_1,\rho_1\eta_1\rangle$. Clearly, 
$H^*(M(\CC_1))\cong \EE_1/\II_1$.

Assuming inductively that $H^*(M(\CC_n))\cong \EE_n/\II_n$, to prove the theorem, it suffices to show that the generators 
$z_{n+1},\rho_{n+1},\eta_{n+1},\alpha_{i,n+1},\beta_{i,n+1}$ of  $H^*(M(\CC_{n+1}))$ satisfy the relations corresponding to the generators of $\II_{n+1}$, not in $\II_n$. As indicated in the proof of \Cref{thm:H*ring}, the defining relations of the LCS Lie algebra $\lie_{n+1}$ encode the map $\ab_*\colon H_2(M(\CC_{n+1})) \to H_2(\Z^B)$, 
where $B=(n+1)(n+3)$ is the rank of $H_1(M(\CC_{n+1}))$ and $\ab_*$ is induced by abelianization. Using \cite[Theorem 3.1]{DCalmostdirect}, we consequently need to analyze elements of the kernel of the map $\ab^*$ dual to  $\ab_*\colon H_2(M(\CC_{n+1})) \to H_2(\Z^B)$ involving classes $uv$, where $u,v\in\{z_{n+1},\rho_{n+1},\eta_{n+1},\alpha_{i,n+1},\beta_{i,n+1}\}$.

Recall from \eqref{eq:Cforms} that the cohomology generators are expressed as logarithmic differential forms, $z_i = \frac{1}{2\pi\ii}\mathrm{dlog}(x_i)$,\ldots, $\beta_{i,j} = \frac{1}{2\pi\ii}\mathrm{dlog}(x_ix_j-1)$. In the context of determining the (new) cohomology relations in $H^*(M(\CC_{n+1}))$ from the root map $\rootmap\colon M(\CC_{n}) \to \Conf_K(\C)$ of \eqref{eq:Croot}, there is a notable exception. Namely, the (``fiber'') hypersurface given by $x_ix_{n+1}-1=0$ is expressed as $x_{n+1}^{}=x_i^{-1}$, i.e., $x_{n+1}^{}-x_i^{-1}=0$, corresponding to the differential form $\frac{1}{2\pi\ii}\mathrm{dlog}(x_{n+1}^{}-x_i^{-1})$. In light of this, the aforementioned analysis should  be done in terms of classes $\hat\beta_{i,n+1}=\beta_{i,n+1}-z_i$.

We illustrate by carrying this analysis out for the class $\alpha_{i,n+1}\hat\beta_{j,n+1}$, with $i<j$. 
The relevant defining relations of the LCS Lie algebra $\lie_{n+1}$,
\[
\begin{array}{ll}
{\displaystyle{{[}z_j-z_{n+1}-\rho_{n+1}-\eta_{n+1}-\sum_{q=1}^{n} (\alpha_{q,n+1}+ \hat\beta_{q,n+1}),\,\hat\beta_{j,n+1}],}}\quad &[z_j-\hat\beta_{j,n+1},\,\alpha_{i,n+1}]\\
{[}\beta_{i,j}+\alpha_{i,n+1}+\hat\beta_{j,n+1},\,\alpha_{i,n+1}],&{[}\beta_{i,j}+\alpha_{i,n+1}+\hat\beta_{j,k},\,\hat\beta_{j,n+1}],
\end{array}
\]
yield the following element of $\ker(\ab^*)$:
\[
\alpha_{i,n+1}\hat\beta_{j,n+1}+z_j\hat\beta_{j,n+1}-z_j\alpha_{i,n+1}+\beta_{i,j}\alpha_{i,n+1}-\beta_{i,j}\hat\beta_{j,n+1}=(\alpha_{i,n+1}+z_j-\beta_{i,j})(\hat\beta_{j,n+1}-\alpha_{i,n+1}).
\]
Rewriting using $\hat\beta_{j,n+1}=\beta_{j,n+1}-z_j$ yields
\[
(\alpha_{i,n+1}+z_j-\beta_{i,j})(\beta_{j,n+1}-z_j-\alpha_{i,n+1})=
(\alpha_{i,n+1}+z_j-\beta_{i,j})(\beta_{j,n+1}-\beta_{i,j}),
\]
one of the generators of $\II_{n+1}$ not in $\II_n$.

The remaining generators of $\II_{n+1}$ not in $\II_n$ may  be obtained in a similar manner.
\end{proof}

\begin{ack}   
E.D. acknowledges the hospitality and support of the other two authors and of the Louisiana State University Department of Mathematics for a research visit where this work was initiated.

\smallskip

\noindent We thank an anonymous referee for a thorough reading of the manuscript, and for pertinent suggestions.
\end{ack}

\end{document}